\numberwithin{equation}{section}
\newtheorem{theorem}{Theorem}[section]
\newtheorem{lemma}[theorem]{Lemma}
\newtheorem{proposition}[theorem]{Proposition}
\newtheorem{corollary}[theorem]{Corollary}
\newtheorem{definition}[theorem]{Definition}
\newtheorem{remark}[theorem]{Remark}
\begin{document}

	\title{Pointwise boundary estimates  for fully nonlinear elliptic equations with nonzero Dirichlet boundary conditions}

	\author{Mengni Li}	
	\address{Southeast University, No.2 SEU Road, Nanjing 211189, P.R. China}
	\email{krisymengni@163.com }

		\author{Chaofan Shi}	
	\address{Southeast University, No.2 SEU Road, Nanjing 211189, P.R. China}
	\email{220231952@seu.edu.cn}

	\thanks{The authors are grateful to Prof. You Li for many discussions on the case of zero boundary conditions and for valuable suggestions to improve the manuscript. This work was partially supported by  National Natural Science Foundation of China (No. 12201107, No. 12301263) and Jiangsu Provincial Scientific Research Center of Applied Mathematics (No.  BK20233002).}

	\begin{abstract} 
		In this paper, we investigate  
		boundary estimates for the Dirichlet problem for a class of fully nonlinear elliptic equations with general   boundary conditions, including   nonzero boundary conditions. 
		Given specific structural conditions on  the problem, we develop  pointwise boundary  upper and lower bound estimates for convex  solutions   based on the subsolution and supersolution method. The global H\"older regularity can be derived as a direct consequence of these pointwise boundary estimates. These results  fundamentally hinge on careful descriptions of the convexity properties of both the domains and the  functions involved.  Moreover, previous results on Monge-Amp\`ere equations with nonzero Dirichlet boundary conditions  can be regarded as a special case of our results.
		
		\noindent
		\textbf{Keywords:} Dirichlet problem, fully nonlinear elliptic equations, nonzero boundary conditions, pointwise boundary estimates, global H\"older estimates
		
		\noindent
		\textbf{MSC 2020:} 35J60, 35B65, 53A15 
	\end{abstract}

	\maketitle
	\tableofcontents

	\section{Introduction}
	\label{sec1} 
	This paper focuses on  
	the following Dirichlet problem for a class of 
	fully nonlinear elliptic partial differential equations:
	\begin{align}
		F(\lambda_{1}(D^{2}u),\cdot \cdot \cdot ,\lambda_{n}(D^{2}u)) &= f(x,u,Du)\quad \text{in} \; \varOmega, \stepcounter{equation}\tag{\theequation}\label{eq01} \\
		u &= \varphi (x)\quad \text{on} \; \partial\varOmega. \stepcounter{equation}\tag{\theequation}\label{eq01boundary}
	\end{align}
	Here, $ \varOmega \subseteq \mathbb{R} ^{n}(n\geq 2) $ is a bounded convex domain, $u:\varOmega\rightarrow \mathbb{R}$ is a convex function,\
	$ \lambda_{1}(D^{2}u),\cdots ,\lambda_{n}(D^{2}u) $ are the eigenvalues of the Hessian matrix $D^{2}u$, and 
	$\varphi:\partial\varOmega\rightarrow \mathbb{R}$ is a continuous function.
	
	\subsection{Connection with geometry}
	
	This problem is deeply interconnected with  geometry.  
	Notably, the equation \eqref{eq01} is of Monge-Amp\`ere type
	when we set the operator   $$F(\lambda_{1}(D^{2}u),\cdot \cdot \cdot ,\lambda_{n}(D^{2}u)) =\det D^2u.$$ As the function $f(x,z,q)$ takes different forms, the Monge-Amp\`ere type equations have various geometric applications \cite{Trudinger-Wang,Figalli,Fefferman}, which are often associated with the curvature of surfaces and geometric properties of manifolds. For $$f(x,z,q)=g(x)(1+|q|^{2})^{\frac{n+2}{2}},$$ the equation is known as the prescribed Gauss curvature equation \cite{Trudinger-Urbas}. The key to solving this equation is directly linked to finding a convex surface defined by $u(x)$ with a specified Gauss curvature $g(x)$. 
	A classical example is when $g(x)$ is a constant function, leading to surfaces of constant Gauss curvature, such as spheres. With $$f(x,z,q) = g(x)|z|^{p-1},$$ the equation relates to the $L_p$-Minkowski problem \cite{Chou-Wang}, which seeks to determine a convex body given its volume and surface area  
	under the $L_p$ norm. In particular, the centroaffine Minkowski problem  
is restricted to the case $p=-n-1$ with the volume considered  
	under centroaffine transformations.  
	When $$f(x,z,q) = |z|^{-(n+2)},$$ the Legendre transform of the solution yields  a complete hyperbolic affine sphere \cite{Calabi,Cheng-Yau}. The study of this case also connects to the properties of metrics in complex geometry, such as K\"ahler metrics, Hilbert metrics, and Poincar\'e metrics \cite{Cheng-Yau-metric,Loewner-Nirenberg,Fefferman}.

	\subsection{Previous boundary estimates results}

	The boundary regularity theory of fully nonlinear elliptic equations,  with various assumptions relative to $\varOmega$, $F$, $f$ and $\varphi$, has drawn considerable attention over the past half century. It is impossible to list all the publications in this field. The readers are referred to  \cite{Caffarelli-Nirenberg-Spruck-1,Caffarelli-Nirenberg-Spruck-3,Cheng-Yau,Cui-Jian-Li,Figalli,Ivochkina,Jian-Li,Jian-Wang-Adv,Jian-Wang-JDG,Krylov,Lazer-McKenna,Li-2021,Li-Li-F,Li-Li-N,Lin-Wang,Tian,Wang,Urbas,Trudinger,Trudinger-Urbas,Trudinger-Wang}  for example. Herein, we  highlight some representative results from these contributions that  motivate our work.
	
	When discussing this field, it is essential to mention the boundary regularity results for the Monge-Amp\`ere equations. A complete proof of the regularity of the solution to the following problem
	\begin{align*}
	\det D^2u &= f(x)\quad \text{in} \; \varOmega \\
	u &= \varphi (x)\quad \text{on} \; \partial\varOmega
\end{align*}
can be traced back at least to 1977 in the work of Cheng and Yau \cite{Cheng-Yau}, 
 where $\varOmega$ is a bounded $C^2$ strictly convex domain, $\varphi\in C^2(\partial\varOmega)$, and 
 $f$ is a function of class $C^k$ ($k\geq3$) with the following behavior near the boundary:
\begin{equation*}
0<f(x)\leq A \operatorname{dist}(x, \partial \varOmega)^{\beta-n-1}
\end{equation*}
for some constants $A>0$ and $\beta>0$.  
This part laid the groundwork for later considerations that allow the right hand side  function $f$ to depend on $u$, but going over from the former to the latter is quite nontrivial. In the same paper  \cite{Cheng-Yau}, to facilitate the dependence of the right hand side on $u$,   	they  showed that  
	if  $\varOmega$ is a bounded $C^2$ strictly convex domain, $\varphi\in C^2(\partial\varOmega)$, and $f$ satisfies the following structural conditions:
	\begin{enumerate}[(i)]
		\item $f$ is a $C^k$ ($k\geq 3$) function defined on $\{(x,z): x\in\varOmega, t<u_0(\varphi,x)\}$, where \[u_0(\varphi,x)=\inf\{z: (x,z)\text{ belongs to the convex hull of the graph of $\varphi$ over $\partial\varOmega$}\};\]
		\item $f$ is non-decreasing in the second variable;
		\item for some constants $A>0$, $\alpha\geq 0$ and $\beta>0$, there holds
		\[0<f(x,z)\leq A\operatorname{dist}(x, \partial \varOmega)^{\beta-n-1}|z-u_0(\varphi,x)|^{-\alpha},\]
	\end{enumerate}
	then the solution $u$ to  
	the problem 
	\begin{align*}
		\det D^2u &= f(x,u)\quad \text{in} \; \varOmega \\
		u &= \varphi (x)\quad \text{on} \; \partial\varOmega
	\end{align*}
	exists,  belongs to $C^{k+1,\varepsilon}(\varOmega)$ for any $\varepsilon\in (0,1)$, and has the following boundary regularities: $u\in \operatorname{Lip}(\overline{\varOmega})$ if $\beta\geq n+1+\alpha$; $u\in C^\mu(\overline{\varOmega})$ for any exponent $\mu$ if $\beta\in [n+\alpha, n+1+\alpha)$; and $u\in C^{\frac{\beta}{n+\alpha}}(\overline{\varOmega})$  if $\beta\in (0,n+\alpha)$. 
	
	One of the earliest results regarding the boundary regularity for the Monge-Amp\`ere type equations, which permit the right hand side function $f$  to depend on $D u$, can be found in the work of Trudinger and Urbas \cite{Trudinger-Urbas} in 1983.   It was proved that the problem 
	\begin{align*}
		\det D^2u &= f(x,u,Du)\quad \text{in} \; \varOmega \\
		u &= \varphi (x)\quad \text{on} \; \partial\varOmega
	\end{align*}
	is uniquely solvable with convex solution 
	$u\in C^2(\varOmega)\cap C^{0,1}(\overline{\varOmega})$ under the assumption that $\varOmega$ is a $C^{1,1}$ uniformly convex domain in $\mathbb{R}^n$, $\varphi\in C^{1,1}(\overline{\varOmega})$, and $f$ is a positive function satisfying the following conditions:
	\begin{enumerate}[(i)]
		\item $f(x,z,q)\in C^{1,1}(\varOmega	\times\mathbb{R} \times \mathbb{R}^n)$,
		\item  $f_z\geq  0$ in $\varOmega	\times\mathbb{R} \times \mathbb{R}^n$,
		\item there exist a constant $N\geq 0$ and positive functions $g\in L^1({\varOmega})$, $h\in L^1_{loc}(\mathbb{R}^n)$ such that  $$\int_{\varOmega}g<\int_{\mathbb{R}^n}h, \ \  \ \  f(x,z,q)\leq  \frac{g(x)}{h(q)}\  \text{ for all } x\in
		\varOmega, |z|\geq  N, q\in \mathbb{R}^n,$$
		\item there exist a positive, nondecreasing function $A$ and constants $\beta\geq  n+1$,  $ 0\leq \gamma\leq  \beta$ such that
		$$0< f(x,z,q)\leq 
		A(|z|) \operatorname{dist}(x, \partial \varOmega)^{\beta-n-1}(1+|q|^2)^{\frac{\gamma}{2}}\ \text{ for all }(x,z,q)\in
		\varOmega	\times\mathbb{R} \times \mathbb{R}^n.$$
	\end{enumerate}
In 1988, 	Urbas \cite{Urbas} made a notable development along this direction by demonstrating that if $u\in C^2(\varOmega)$ is a convex solution 
	to 
	\[\det D^2u=f(x,u,Du),\]where $\varOmega$ is a $C^{1,1}$ bounded domain in $\mathbb{R}^n$, and $f$ is a positive function satisfying  that there exist some constants $A>0$, $\gamma> n+1$,  $ n+1\leq \beta<\gamma$ such that
	$$f(x,z,q)\geq  A\operatorname{dist}(x, \partial \varOmega)^{\beta-n-1}(1+|q|^2)^{\frac{\gamma}{2}}\ \text{ for all }
	(x,z,q)\in\varOmega	\times\mathbb{R} \times \mathbb{R}^n,$$
	then $u$ is globally H\"older continuous, and $|u|_{C^\mu(\overline{\varOmega})}\leq  C(\beta,\gamma,A,n,\varOmega)$, where $\mu=\min\big\{\frac{\gamma-\beta}{\gamma+n-2},\frac{1}{2}\big\}$. 
	
We note that these	boundary regularity results can be viewed as a type of boundary upper bound estimate.  It is  interesting to ask whether boundary lower bound estimate can also be obtained. Based on the work 
 \cite{Lazer-McKenna} in 1996, one can expect this to be the case, and moreover  both upper and lower bound estimates near the boundary appear promising to be established for the Dirichlet problems  with zero boundary condition.  Specifically, 
in \cite{Lazer-McKenna}, 	Lazer and McKenna  studied  different types of singular boundary value problems for the Monge-Amp\`ere operator by using the subsolution and supersolution method. 
	In particular, they proved that for the singular boundary problem
	\begin{align*}
		\det D^2u&=g(x)|u|^{-\alpha}\ \   
		\text{ in }\varOmega,\\
		u&=0\ \ \text{ on }\partial\varOmega,
	\end{align*}
	if $\varOmega$ is a smooth, bounded, strictly convex domain in $\mathbb{R}^n$,  $g\in C^\infty({\overline{\varOmega}})$ with $g(x)>0$ for all $x\in\varOmega$, and $\alpha>1$, then  there exist two positive constants $C_1$ and $C_2$ such that the solution $u$   
	can be bounded as follows:
	\begin{equation*}
		C_1\left(\operatorname{dist}(x,\partial\varOmega)\right)^{\frac{n+1}{n+\alpha}}\leq
		|u(x)|\leq C_2\left(\operatorname{dist}(x,\partial\varOmega)\right)^{\frac{n+1}{n+\alpha}}.
	\end{equation*} 
To the best of our knowledge, we have only found this related result so far, which has motivated us to explore boundary upper and lower bound estimates in the context of general fully nonlinear elliptic equations.  
	
For  general fully nonlinear elliptic equations, we now review some latest results that inspired this paper. In the past decade,	Cui, Jian and Li \cite{Cui-Jian-Li} provided the boundary H\"older estimates of strictly convex solutions in $C(\overline{\varOmega})\cap C^2(\varOmega)$ to the problem 
	\begin{align*}
		F(D^2u,Du,u,x) &= 0\quad \text{in} \; \varOmega \\
		u &= 0\quad \text{on} \; \partial\varOmega,
	\end{align*}
	where $\varOmega$ is a bounded convex domain satisfying the exterior sphere condition, and $F$ is an elliptic operator satisfying a series of bounded conditions. 
More recently, Li and Li \cite{Li-Li-N} employed the concept of $(a,\eta)$ type domains introduced by  Jian and Li \cite{Jian-Li} to further  discuss this problem, which can be rephrased as 
	\begin{align*}
		F(\lambda_{1}(D^{2}u),\cdot \cdot \cdot ,\lambda_{n}(D^{2}u)) &= f(x,u,Du)\quad \text{in} \; \varOmega,   \\
		u &= 0\quad \text{on} \; \partial\varOmega,
	\end{align*}
	and particularly focused on the relation between the boundary  H\"older regularity exponent $\mu$ and the convexity parameter $a$. Since boundary  H\"older regularity estimates can also be regarded as boundary upper bound estimates, they also established boundary lower bound estimates in \cite{Li-Li-F} for this problem with structure conditions given by inverse inequalities. We will not review more details here, as the  results in \cite{Li-Li-F,Li-Li-N} can be considered special cases of Theorem \ref{thm1} in this paper. The proofs in these works heavily rely on the subsolution and supersolution method, which is also the main technique that we will use in this study.

	The principal purpose of this paper is to establish boundary upper and lower bound estimates for more general Dirichlet problems with more general  boundary conditions. 
	Due to the  appearance of nonzero boundary conditions as included,  the problem \eqref{eq01}-\eqref{eq01boundary} is of independent interest and difficulties, and the existing boundary regularity theory  is still not complete and requires further exploration. 
	Based on the aforementioned literature,   estimating gradient terms and relating the boundary H\"older estimates to the convexity of boundary 
	are also key ingredients.

	\subsection{Structure conditions and main results}
	
	Motivated by the aforementioned lierature,  especially by  \cite{Li-Li-F,Li-Li-N}, we first propose the structure conditions for this paper.

	Suppose that for the boundary function $\varphi\in C(\partial\varOmega)$, there exist a convex function $\overline{\varphi}\in C(\overline{\varOmega})$ and a concave function $\underline{\varphi}\in C(\overline{\varOmega})$  such that 
	\begin{equation*}
		\overline{\varphi}\big|_{\partial\varOmega}=\varphi,\ \ \ \ \underline{\varphi}\big|_{\partial\varOmega}=\varphi.
	\end{equation*}
	Let	$\varphi^*$ be the following convex extension of $\varphi$ from $\partial\varOmega$ to $\overline{\varOmega}$:
	\begin{equation*}
		\varphi^*:=\sup\left\{\ell:\overline{\varOmega}\to\mathbb{R}:\ \ell\ \text{is an affine function},\  \ell\big|_{\partial\varOmega}\leq \varphi\right\}.
	\end{equation*}
	Let $\varphi_*$ be the following concave extension of $\varphi$ from $\partial\varOmega$ to $\overline{\varOmega}$: 
	\begin{equation*}
		\varphi_*:=\inf\left\{\ell:\overline{\varOmega}\to\mathbb{R}:\ \ell\ \text{is an affine function},\  \ell\big|_{\partial\varOmega}\geq \varphi\right\}.
	\end{equation*}
	Then it is clear that
	\begin{equation*}
		\varphi^*\big|_{\partial\varOmega}=\varphi,\ \ \ \ \varphi_*\big|_{\partial\varOmega}=\varphi.
	\end{equation*}

	For the right hand side of \eqref{eq01}, let $f:\varOmega \times (-\infty,0) \times \mathbb{R} ^{n}\rightarrow \mathbb{R}$ be a function satisfying the following structure conditions $(f_1)$-$(f_2)$ and $(f_3)$ or $(f_3')$: 
\begin{enumerate}
	\item[$(f_1)$] $f(x,z,q) \in C(\varOmega\times(-\infty,0)\times\mathbb{R}^{n})$;
	\item[$(f_2)$]   
	$f(x,z,q)$  is non-decreasing in the second argument for any fixed first and third arguments;
	\item[$(f_3)$] there exist some constants $A \geq 0$, $\alpha \in \mathbb{R}$, $\gamma \in \mathbb{R}$ and $\beta \geq n+1+\gamma$ such that
	\begin{equation*}
		0 \leq f(x, z, q) \leq A \operatorname{dist}(x, \partial \varOmega)^{\beta-n-1}|z-\varphi^*|^{-\alpha}\left(1+|q-D\varphi^*|^2\right)^{\frac{\gamma}{2}},  \forall(x, z, q) \in \varOmega \times(-\infty, 0) \times \mathbb{R}^n;
	\end{equation*}
	\item[$(f_3')$] there exist some constants $A \geq 0$, $\alpha \in \mathbb{R}$, $\gamma \in \mathbb{R}$ and 
	$\beta\geq n+1+\gamma$ such that
	\begin{equation*}
		f(x, z, q) \geq A \operatorname{dist}(x, \partial \varOmega)^{\beta-n-1}|z-\varphi_*|^{-\alpha}\left(1+|q-D\varphi_*|^2\right)^{\frac{\gamma}{2}},  \forall(x, z, q) \in \varOmega \times(-\infty, 0) \times \mathbb{R}^n.
	\end{equation*}
\end{enumerate}
It is evident from $(f_3)$ or $(f_3')$ that near the boundary $\partial\varOmega$, both the degenerate case $f(x,u,Du)\to0$ (when $\beta$ is large enough) and the singular case $f(x,u,Du)\to+\infty$ (when $\alpha$ is large enough) are contained in our discussion. Moreover, the conditions as $(f_3)$ and $(f_3')$ possess formal invariance under translations and rotations since the norms $|Du-D\varphi^*|$, $|Du-D\varphi_*|$ and all eigenvalues of $D^2u$ are invariant under these tranformations.

Let	$\lambda_{\min}(D^{2}u)$, $\lambda_{\max}(D^{2}u)$ denote respectively the  minimum and maximum of all $n$  eigenvalues of $D^2u$. 
	For the left hand side of \eqref{eq01}, let $F$ be an   operator satisfying the following structure conditions $(F_1)$ and $(F_2)$ or $(F_2')$: 
	\begin{enumerate}
		\item[$(F_1)$] $F$ is elliptic, i.e. \begin{equation*}
			\partial_{\lambda_{i}}F(\lambda_{1}(D^{2}u),\cdot \cdot \cdot ,\lambda_{n}(D^{2}u)) \geq 0,\ \forall i \in \{ 1,\cdot \cdot \cdot ,n \};
		\end{equation*}
		\item[$(F_2)$] there exist some constants $B>0$, $s\geq 0$ and $t\geq 0$ such that
		\begin{equation*}
			F(\lambda_{1}(D^{2}u),\cdot \cdot \cdot ,\lambda_{n}(D^{2}u))\geq B(\lambda_{\min}(D^{2}u))^{s}(\lambda_{\max}(D^{2}u))^{t};
		\end{equation*}
		\item[$(F_2')$] there some constants $B>0$, $s\geq 0$ and $t\geq 0$ such that
		\begin{equation*}
			F(\lambda_{1}(D^{2}u),\cdot \cdot \cdot ,\lambda_{n}(D^{2}u))\leq B(\lambda_{\min}(D^{2}u))^{s}(\lambda_{\max}(D^{2}u))^{t}.
		\end{equation*}
	\end{enumerate}
	We note that the following  class of $k$-Hessian operators for any $k\in\{1,\cdots,n\}$ satisfy these  conditions:
	\begin{align*}
		F(\lambda_{1}(D^{2}u),\cdot \cdot \cdot ,\lambda_{n}(D^{2}u))=\sigma_{k}(D^{2}u):=\sum_{1\leq i_1<\cdots<i_k\leq n} \lambda_{i_1}(D^{2}u) \cdots \lambda_{i_k}(D^{2}u),
	\end{align*}
	which include the Poisson operator $\Delta u$ when $k=1$ and the Monge-Amp\`ere operator $\det D^2 u$ when $k=n$ as two special cases. There are also many other operators that satisfy these conditions, 
	and we refer the readers to \cite{Li-Li-N} for more examples and details.

	Based on the above structure conditions, we are ready to state the main results of this paper. In order to analyze the relation of boundary estimates with the convexity of boundary, 
	these results are given for convex domain $\varOmega$ of exterior-$(a,\eta)$-type and any boundary point $x\in\partial\varOmega$ of exterior-$(a,\eta)$-type or  interior-$(a, \eta, \varepsilon)$-type  with its $\varOmega_{1/2,x}$ domain, see the definitions collected in Section \ref{sec:pre}. In this way, the boundary estimate exponent $\mu$ can be written as a function of the convexity parameter $a$:
	\begin{equation}\label{eq10}
		\mu(a):= \begin{cases}\frac{\beta-\gamma+2t-n-1}{\alpha-\gamma+s+t}+\frac{2 s}{a(\alpha-\gamma+s+t)}, & \text { if } \beta<\alpha+n+s-t+1-\frac{2 s}{a} \\ 
			1, & \text { if } \beta \geq \alpha+n+s-t+1-\frac{2 s}{a}\end{cases}
	\end{equation}
	where  $a=a(x)$ for pointwise boundary estimates  at any $x\in\partial\varOmega$, $a$ is taken as $a_{\max}:=\max_{x\in \partial\varOmega}a(x)$  for boundary upper bound estimates  at the entire boundary, and $a$ is taken as $a_{\min}:=\min_{x\in \partial\varOmega}a(x)$ for boundary lower bound estimates at the entire boundary.

	Our first two results concern the boundary upper bound estimates as well as the global H\"older regularity for  the general problem  
	(\ref{eq01})-\eqref{eq01boundary}.
	
	\begin{theorem}\label{thm1-1}
		Suppose that $ \varOmega \subseteq \mathbb{R} ^{n} $ 
		is a bounded convex domain, $\mu$ satisfies \eqref{eq10}, $\varphi\in C^{\mu}(\partial\varOmega)$,  $\varphi^* \in C^{\mu}(\overline{\varOmega})$, 
		$F$ satisfies $(F_1)$  and $(F_2)$, and $f$ satisfies $(f_1)$, $(f_2)$  and $(f_3)$. If $u$ is a convex viscosity solution to the  problem (\ref{eq01})-\eqref{eq01boundary},
		then for any boundary point $x\in \partial\varOmega$ that is of some exterior-$(a(x),\eta)$-type  with $a(x)\in[1,+\infty)$, 
		for any $y\in \varOmega_{1/2,x}$, there exists a constant $M=C(a, \eta, A,B, \alpha, \beta, \gamma, s, t, \operatorname{\operatorname{diam}}(\varOmega), n)>0$ such that 
		\begin{equation*}
			\vert u(y)-u(x) \vert\leq M\left(\operatorname{dist}(x,y)\right)^{\mu(a(x))},
		\end{equation*}
		and hence 
		\begin{equation*}
			\vert u(y)-u(x) \vert\leq M\left(\operatorname{dist}(x,y)\right)^{\mu(a_{\max})}.
		\end{equation*}
	\end{theorem}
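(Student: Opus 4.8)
The plan is to prove the two one-sided inequalities $u(y)-u(x)\le M\operatorname{dist}(x,y)^{\mu(a(x))}$ and $u(x)-u(y)\le M\operatorname{dist}(x,y)^{\mu(a(x))}$ separately. These are of quite different natures: the first is elementary and uses only convexity, while the second is the substantive part and rests on the subsolution method.

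For the upper inequality I would argue using neither $(F_2)$ nor $(f_3)$. Given $x\in\partial\varOmega$ and $y\in\varOmega_{1/2,x}$ with $y\neq x$, extend the segment from $x$ through $y$ until it meets $\partial\varOmega$ again at a point $x'$, so that $y=(1-\theta)x+\theta x'$ with $\theta=\operatorname{dist}(x,y)/\operatorname{dist}(x,x')\in(0,1)$. Convexity of $u$ on $\overline{\varOmega}$ together with $u|_{\partial\varOmega}=\varphi$ gives
\begin{equation*}
u(y)-u(x)\le\theta\bigl(\varphi(x')-\varphi(x)\bigr)\le[\varphi]_{C^{\mu}(\partial\varOmega)}\,\theta\operatorname{dist}(x,x')^{\mu}=[\varphi]_{C^{\mu}(\partial\varOmega)}\,\theta^{1-\mu}\operatorname{dist}(x,y)^{\mu}\le[\varphi]_{C^{\mu}(\partial\varOmega)}\operatorname{dist}(x,y)^{\mu},
\end{equation*}
where I used $\mu=\mu(a(x))\le1$ (built into \eqref{eq10}) and $\theta\le1$; moreover $[\varphi]_{C^{\mu}(\partial\varOmega)}\le[\varphi^{*}]_{C^{\mu}(\overline{\varOmega})}<\infty$ by hypothesis.

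For the lower inequality I would build a barrier. Fix $x\in\partial\varOmega$ of exterior-$(a(x),\eta)$-type; after a rigid motion, place $x$ at the origin with the exterior $(a(x),\eta)$-region along a coordinate axis (see Section \ref{sec:pre} for the definitions). First I would pick an affine function $\ell$ with $\ell\le\varphi^{*}$ on $\overline{\varOmega}$, $\ell(x)=\varphi^{*}(x)=\varphi(x)$, and slope controlled in terms of $[\varphi^{*}]_{C^{\mu}(\overline{\varOmega})}$ and $\operatorname{diam}(\varOmega)$, so that $\ell\le\varphi$ on $\partial\varOmega$ and $\varphi^{*}-\ell\ge0$. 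Next I would construct an explicit convex barrier $\underline w=\ell-P$ on $\varOmega_{1/2,x}$, where $P\ge0$ is concave, vanishes along the boundary of the exterior $(a(x),\eta)$-region, decays like $\operatorname{dist}(x,\cdot)^{\mu(a(x))}$ near the origin, and has internal exponent and coefficients dictated by the normal scale $\operatorname{dist}(\cdot,\partial\varOmega)^{1/a(x)}$ allowed by the exterior condition. The barrier should satisfy: (a) $\underline w\le u$ on $\partial\varOmega_{1/2,x}$ — automatic on $\partial\varOmega\cap\partial\varOmega_{1/2,x}$, where $\underline w\le\ell\le\varphi=u$, and arranged on the part interior to $\varOmega$ by taking $P$ large, which is possible since there $\operatorname{dist}(x,\cdot)$ is bounded below and $u$ admits an a priori $L^{\infty}(\overline{\varOmega})$ bound; (b) $\underline w$ is a viscosity subsolution of \eqref{eq01}, which by $(F_2)$ and $(f_3)$ reduces to the pointwise inequality
\begin{equation*}
B\bigl(\lambda_{\min}(D^{2}\underline w)\bigr)^{s}\bigl(\lambda_{\max}(D^{2}\underline w)\bigr)^{t}\ \ge\ A\operatorname{dist}(x,\partial\varOmega)^{\beta-n-1}\,\bigl|\underline w-\varphi^{*}\bigr|^{-\alpha}\bigl(1+\bigl|D\underline w-D\varphi^{*}\bigr|^{2}\bigr)^{\gamma/2}\quad\text{in }\varOmega_{1/2,x};
\end{equation*}
(c) the resulting bound $u(y)\ge\underline w(y)=\ell(y)-P(y)\ge\varphi(x)-M\operatorname{dist}(x,y)^{\mu(a(x))}$. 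The inequality $\underline w\le u$ in (c) would then follow from a comparison principle for \eqref{eq01}: at an interior maximum of $\underline w-u$ the gradients coincide, so ellipticity $(F_1)$ and monotonicity of $f$ in its second argument $(f_2)$ yield the contradiction — crucially, no monotonicity of $f$ in the gradient variable is needed.

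The hard part will be (b). One has to compute the extreme eigenvalues of $D^{2}\underline w$ in the adapted coordinates: in the tangential directions $D^{2}\underline w$ is comparable to a negative power of $P$, while in the normal direction it degenerates like a power of $\operatorname{dist}(x,\partial\varOmega)^{1/a(x)}$ — the only place the convexity parameter $a$ enters — so that $\bigl(\lambda_{\min}(D^{2}\underline w)\bigr)^{s}\bigl(\lambda_{\max}(D^{2}\underline w)\bigr)^{t}$ becomes a definite power of $\operatorname{dist}(x,\partial\varOmega)$ times a power of $P$. On the right-hand side, $\operatorname{dist}(x,\partial\varOmega)^{\beta-n-1}$, the factor $|\underline w-\varphi^{*}|^{-\alpha}$ (squeezed between powers of $P$ via $0\le P\le|\underline w-\varphi^{*}|=P+(\varphi^{*}-\ell)\le P+C\operatorname{dist}(x,\cdot)^{\mu}$), and the gradient factor $\bigl(1+|D\underline w-D\varphi^{*}|^{2}\bigr)^{\gamma/2}$ (for which one must dominate $|D\underline w-D\varphi^{*}|$ by $|DP|$ plus the controlled slope of $\ell$, so that the non-smoothness of $\varphi^{*}$ causes no trouble) each contribute competing powers of $\operatorname{dist}(x,\partial\varOmega)$. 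Making the two sides balance is precisely what forces the barrier exponent to be $\mu(a)$ as in \eqref{eq10}, the two cases there reflecting whether the balanced exponent is admissible (and then $\le1$, as required by convexity of $\underline w$) or must be truncated at $1$. With (a)–(c) in hand, the pointwise estimate follows; the displayed $a_{\max}$-version is then immediate, since under the standing assumptions $s\ge0$ and $\alpha-\gamma+s+t>0$ the map $a\mapsto\mu(a)$ is non-increasing, so $\mu(a(x))\ge\mu(a_{\max})$ and, absorbing $\operatorname{diam}(\varOmega)$ into $M$, $\operatorname{dist}(x,y)^{\mu(a(x))}\le C\,\operatorname{dist}(x,y)^{\mu(a_{\max})}$.
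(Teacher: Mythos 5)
Your overall strategy — split $|u(y)-u(x)|$ into two one-sided bounds, prove the easy one by convexity, and prove the hard one by building a convex subsolution barrier on $\varOmega_{1/2,x}$ and invoking the comparison principle that uses $(F_1)$ and only the $z$-monotonicity $(f_2)$ — matches the paper's approach, as does your observation that $a\mapsto\mu(a)$ is non-increasing (so the $\mu(a_{\max})$ form is immediate). Your argument for the easy direction is correct, though for $y\in\varOmega_{1/2,x}$ it is unnecessary: $u(y)\le u(x)$ holds directly by Lemma \ref{property}(i). The genuine difference, and the gap, is in the choice of barrier.

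You take $\underline w=\ell-P$ with $\ell$ an affine minorant of $\varphi^*$ touching at $x$, and $P\ge 0$ concave with the correct decay rate. The paper instead takes $W_{\operatorname{sub}}=W+\varphi^*$ with the explicit barrier $W$ of \eqref{eq:W} (so $-W$ plays the role of your $P$). This is not a cosmetic distinction. The structure condition $(f_3)$ measures $z$ and $q$ relative to $\varphi^*$ and $D\varphi^*$, and the paper's choice gives $W_{\operatorname{sub}}-\varphi^*=W$ and $DW_{\operatorname{sub}}-D\varphi^*=DW$ exactly, so the right-hand side of $(f_3)$ collapses to a clean expression in $W$ alone, while on the left $D^2W_{\operatorname{sub}}=D^2W+D^2\varphi^*\ge D^2W$ and ellipticity of $F$ only helps. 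With your $\underline w=\ell-P$ the gradient term becomes $|D\underline w-D\varphi^*|=|D\ell-DP-D\varphi^*|$, and the $D\varphi^*$ does \emph{not} cancel. The hypothesis $\varphi^*\in C^{\mu}(\overline\varOmega)$ with $\mu<1$ (together with convexity) does not give any $L^\infty$ control of $D\varphi^*$ — consider $\varphi^*(x)=-\sqrt{1-|x|^2}$ on a ball, which is convex, $C^{1/2}$ up to the boundary, yet has unbounded gradient near $\partial\varOmega$. So for $\gamma>0$ you cannot bound $\bigl(1+|D\underline w-D\varphi^*|^2\bigr)^{\gamma/2}$ from above near $\partial\varOmega$, which is exactly where the subsolution inequality is needed. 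Your parenthetical remark that "the non-smoothness of $\varphi^*$ causes no trouble" for the gradient factor is therefore unjustified, and it is precisely the place where the argument breaks. (For $\gamma\le 0$ the factor is $\le 1$ and the problem evaporates, and the zeroth-order factor $|\underline w-\varphi^*|^{-\alpha}=(P+\varphi^*-\ell)^{-\alpha}$ is manageable as you note, so the gap concerns only $\gamma>0$; but the theorem allows $\gamma\in\mathbb{R}$ and $\gamma>0$ is the case relevant to the prescribed Gauss curvature equation.) Related, at a boundary point $x$ where $\varphi^*$ has infinite slope there may not even exist an affine minorant $\ell$ touching at $x$ with controlled slope, so the first step of your construction need not be available.

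The remedy is simply to replace $\ell$ by $\varphi^*$ and set $\underline w=\varphi^*-P=W+\varphi^*$: then $\underline w-\varphi^*=-P$ and $D\underline w-D\varphi^*=-DP$ and the singular $D\varphi^*$ disappears by design. With this change, the rest of your outline — the eigenvalue computation for $D^2W$ via Lemma \ref{lemma:eigen}, the balancing of powers of $\operatorname{dist}(\cdot,\partial\varOmega)$ that forces the exponent $\mu(a)$, the boundary comparison on $\partial V'$ versus $\partial\varOmega$, and the comparison principle — becomes the paper's argument.
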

	
		\begin{theorem}\label{thm1-1'}
		Suppose that $ \varOmega \subseteq \mathbb{R} ^{n} $ is exterior-$(a, \eta)$-type  domain with $a \subseteq[2,+\infty)$ (here corresponds to $a=a_{\max}$ in Theorem \ref{thm1-1}), $\mu$ satisfies \eqref{eq10}, $\varphi\in C^{\mu}(\partial\varOmega)$,  $\varphi^* \in C^{\mu}(\overline{\varOmega})$, 
		$F$ satisfies $(F_1)$  and $(F_2)$, and $f$ satisfies $(f_1)$, $(f_2)$  and $(f_3)$. If $u$ is a convex viscosity solution to the  problem (\ref{eq01})-\eqref{eq01boundary},
		then  for any 	$x,y\in\overline{\varOmega}$, there exists a constant $M=C(a, \eta, A,B, \alpha, \beta, \gamma, s, t, \operatorname{\operatorname{diam}}(\varOmega), n)>0$ such that 
		\begin{equation*}
			\vert u(y)-u(x) \vert\leq M\left(\operatorname{dist}(x,y)\right)^{\mu(a)},
		\end{equation*}
		and therefore $$u\in C^{\mu(a)}(\overline{\varOmega} )$$ with
		\begin{equation*}
			| u\vert_{C^{\mu(a)}(\overline{\varOmega})} \leq C(a, \eta, A,B, \alpha, \beta, \gamma, s, t, \operatorname{\operatorname{diam}}(\varOmega), n, |\varphi^*|_{C^{\mu(a)}(\overline{\varOmega})} ).
		\end{equation*}
	\end{theorem}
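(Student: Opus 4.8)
The plan is to promote the pointwise boundary estimate of Theorem~\ref{thm1-1} to a genuine global modulus of continuity on $\overline{\varOmega}$, the bridge being the convexity of $u$. Throughout write $\mu=\mu(a)$ and $d(x)=\operatorname{dist}(x,\partial\varOmega)$.

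\textbf{Step 1 (a global boundary modulus).} Since $\varOmega$ is exterior-$(a,\eta)$-type, every $x\in\partial\varOmega$ is of exterior-$(a,\eta)$-type, so Theorem~\ref{thm1-1} applies at each boundary point with one and the same constant $M$, giving $|u(y)-u(x)|\le M\operatorname{dist}(x,y)^{\mu}$ for all $x\in\partial\varOmega$ and $y\in\varOmega_{1/2,x}$; that $a=a_{\max}$ is the correct choice reflects the monotonicity of $a\mapsto\mu(a)$ evident from \eqref{eq10}. One next records a bound $\|u\|_{L^\infty(\overline{\varOmega})}\le C$, obtained either from the covering $\overline{\varOmega}=\bigcup_{x\in\partial\varOmega}\varOmega_{1/2,x}$ together with $u|_{\partial\varOmega}=\varphi$ and $|\varphi|\le|\varphi^*|_{C^{\mu}(\overline{\varOmega})}$, or from the comparison $u\le\varphi_*$ and a global lower barrier built from $(f_1)$--$(f_3)$. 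Because $\operatorname{dist}(x,y)\ge c_0(a,\eta,\operatorname{diam}(\varOmega))>0$ whenever $y\in\overline{\varOmega}\setminus\varOmega_{1/2,x}$, combining these facts produces a constant $M_1=C(a,\eta,A,B,\alpha,\beta,\gamma,s,t,\operatorname{diam}(\varOmega),n,|\varphi^*|_{C^{\mu}(\overline{\varOmega})})$ with
\[
|u(y)-u(x_0)|\le M_1\,\operatorname{dist}(x_0,y)^{\mu}\qquad\text{for all }x_0\in\partial\varOmega,\ y\in\overline{\varOmega}.
\]

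\textbf{Step 2 (interior gradient decay and H\"older estimate).} Fix $x\in\varOmega$ and $p$ in the subdifferential $\partial u(x)$. For a unit vector $e$, let $x_e$ be the first point of $\partial\varOmega$ on the ray $\{x+te:t>0\}$, so that $\rho_e:=|x_e-x|\ge d(x)$ and, by convexity, $u(x_e)\ge u(x)+\rho_e\,(p\cdot e)$. Estimating $u(x_e)-u(x)$ by applying the inequality of Step~1 twice, through a nearest boundary point of $x$, gives $u(x_e)-u(x)\le 3M_1\rho_e^{\mu}$, hence $p\cdot e\le 3M_1\rho_e^{\mu-1}\le 3M_1 d(x)^{\mu-1}$ since $\mu\le1$; taking the supremum over $e$, $|p|\le 3M_1 d(x)^{\mu-1}$, so $u$ is locally Lipschitz in $\varOmega$ with $|Du(x)|\le 3M_1 d(x)^{\mu-1}$ a.e. Now let $x,y\in\varOmega$ with $d(x)\le d(y)$. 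If $|x-y|\ge\tfrac12 d(y)$ then $d(x),d(y)\le 2|x-y|$, the nearest boundary points $x_0,y_0$ satisfy $|x_0-y_0|\le d(x)+|x-y|+d(y)\le5|x-y|$, and telescoping $u(x)-u(y)=(u(x)-u(x_0))+(u(x_0)-u(y_0))+(u(y_0)-u(y))$ and applying Step~1 term by term gives $|u(x)-u(y)|\le CM_1|x-y|^{\mu}$. If $|x-y|<\tfrac12 d(y)$ then, as $d$ is $1$-Lipschitz, every $z\in[x,y]$ has $d(z)\ge\tfrac12(d(x)+d(y)-|x-y|)\ge|x-y|$, so $[x,y]\Subset\varOmega$ and integrating the gradient bound along the segment gives $|u(x)-u(y)|\le 3M_1|x-y|\cdot|x-y|^{\mu-1}=3M_1|x-y|^{\mu}$. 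In either case $|u(x)-u(y)|\le CM_1|x-y|^{\mu}$, and by continuity of $u$ up to $\partial\varOmega$ this extends to all $x,y\in\overline{\varOmega}$. Together with the $L^\infty$ bound of Step~1, this is exactly the assertion $u\in C^{\mu(a)}(\overline{\varOmega})$ with the stated bound on $|u|_{C^{\mu(a)}(\overline{\varOmega})}$.

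The decisive difficulty is this very passage from the localized, one-sided estimate of Theorem~\ref{thm1-1} — controlling $u(y)-u(x)$ only for $y$ in the half-domain $\varOmega_{1/2,x}$ — to a bound that is both global and two-sided; convexity is what makes it work, supplying the interior gradient decay $|Du(x)|\lesssim d(x)^{\mu-1}$ and the two-case chord comparison. The remaining points requiring care are the uniformity over $\partial\varOmega$ of the constant in Theorem~\ref{thm1-1} (hence the hypothesis $a=a_{\max}$ and the monotonicity of $\mu$) and the non-circular derivation of the $L^\infty$ bound on $u$ in Step~1.
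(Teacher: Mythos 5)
Your overall plan — upgrade the pointwise boundary estimate of Theorem~\ref{thm1-1} to a global H\"older bound using convexity — matches the paper's, but your Step~2 is a genuinely different mechanism: you derive the interior gradient decay $|Du(x)|\lesssim d(x)^{\mu-1}$ from the subgradient inequality and then split into a ``far'' case (telescope through nearest boundary points) and a ``near'' case (integrate $Du$ along a segment that stays at distance $\gtrsim|x-y|$ from $\partial\varOmega$). The paper instead uses a purely one-dimensional convexity lemma (Lemma~\ref{lemma-2}, a ``shift'' inequality comparing the increment $u(y_2)-u(y_1)$ with increments over translated intervals anchored at $x_1$ and $x_2$) together with the abstract Lemma~\ref{lemma-new}. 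Both routes are standard and lead to the same conclusion; yours is perhaps more transparent to a PDE reader, while the paper's avoids talking about $Du$ at all.

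There is, however, a genuine gap in your Step~1. You assert that $\operatorname{dist}(x_0,y)\ge c_0(a,\eta,\operatorname{diam}\varOmega)>0$ whenever $y\in\overline{\varOmega}\setminus\varOmega_{1/2,x_0}$, and then combine this with the $L^\infty$ bound to extend the estimate $|u(y)-u(x_0)|\le M_1\operatorname{dist}(x_0,y)^{\mu}$ from $y\in\varOmega_{1/2,x_0}$ to all $y\in\overline{\varOmega}$. That distance claim is false: $\varOmega_{1/2,x_0}$ depends on $u$ (via the minimizers $y_Q$ along the chords through $x_0$), not only on the geometry of $\varOmega$, and it can pinch arbitrarily close to $x_0$ in some directions. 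For instance, on a disk with $u(x)=x_1^2+K\,x_2^2$ ($K$ large) and $x_0$ on the major axis, the chord-minimizers $y_Q$ along nearly tangential directions approach $x_0$, so points just past $y_Q$ lie outside $\varOmega_{1/2,x_0}$ yet are arbitrarily close to $x_0$. This matters: in your Step~2 you use $|u(x)-u(x_0)|\le 2M_1 d(x)^{\mu}$ for the \emph{nearest} boundary point $x_0$ of an interior $x$, and there is no guarantee that $x\in\varOmega_{1/2,x_0}$.

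The gap is not fatal, but filling it requires a further convexity argument, not the distance lower bound. Concretely: if $y\notin\varOmega_{1/2,x_0}$, let $Q$ be the second boundary point on $l_{x_0y}$ and $y_Q$ the minimizer on the chord; then $y\in[y_Q,Q]$, and by monotonicity of one-dimensional convex slopes the increment $u(y)-u(y_Q)$ is dominated by $u(Q)-u(Q-(y-y_Q))$, which Theorem~\ref{thm1-1} (applied at $Q$, since $Q-(y-y_Q)\in\varOmega_{1/2,Q}$) bounds by $M_1|y-y_Q|^{\mu}\le M_1|y-x_0|^{\mu}$; combining with $|u(y_Q)-u(x_0)|\le M_1|y_Q-x_0|^{\mu}\le M_1|y-x_0|^{\mu}$ gives the desired $|u(y)-u(x_0)|\le 2M_1|y-x_0|^{\mu}$. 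This is precisely what the paper's Lemma~\ref{lemma-2} encodes. Once Step~1 is repaired this way, your gradient-decay and two-case chord argument in Step~2 is correct and gives the theorem.
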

	
	Our third result concerns the boundary lower bound estimates for the general problem  
	(\ref{eq01})-\eqref{eq01boundary}.
	\begin{theorem}\label{thm1-2}
		Suppose that  $ \varOmega \subseteq \mathbb{R} ^{n} $ 
		is a bounded convex domain, $\mu$ satisfies \eqref{eq10},  $\varphi\in C^{\mu}(\partial\varOmega)$,  $\varphi_* \in C^{\mu}(\overline{\varOmega})$, 
		$F$ satisfies $(F_1)$  and $(F_2')$, and $f$ satisfies $(f_1)$, $(f_2)$ and $(f_3')$. If $u$ is a convex viscosity solution to problem (\ref{eq01})-\eqref{eq01boundary},
		then for any boundary point $x\in \partial\varOmega$ that is of some interior-$(a(x),\eta,\varepsilon)$-type  with $a(x)\in[1,+\infty)$, and for any $y\in \varOmega_{1/2,x}$, there exists a constant $m=C(a, b,\eta,\varepsilon, A,B,\alpha,\beta,\gamma,s,t, n)>0$ such that 
		\begin{equation*}
			\vert u(y)-u(x) \vert\geq m\left(\operatorname{dist}(x,y)\right)^{\mu(a(x))}.
		\end{equation*}
		and hence 
		\begin{equation*}
			\vert u(y)-u(x) \vert\geq m\left(\operatorname{dist}(x,y)\right)^{\mu(a_{\min})}.
		\end{equation*}
	\end{theorem}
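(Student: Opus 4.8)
The plan is to establish the pointwise estimate by exhibiting an explicit smooth upper barrier for $u$ near $x$ --- a classical supersolution of \eqref{eq01} that dominates $u$ on the boundary of a model region attached to $x$ and is already as negative as $u(x)-m\,(\operatorname{dist}(x,\cdot))^{\mu(a(x))}$ --- and then to deduce the $a_{\min}$ version from the monotonicity of $a\mapsto\mu(a)$. Since $u$ is a convex viscosity solution it is in particular a viscosity subsolution, $F$ is degenerate elliptic by $(F_1)$, and $f$ is nondecreasing in its second argument by $(f_2)$; hence, after the standard harmless perturbation making one inequality strict, a comparison between $u$ and a classical supersolution $\overline w$ with $\overline w\ge u$ on the relevant boundary yields $u\le\overline w$. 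The point of using $(F_2')$ and $(f_3')$ for a \emph{lower} bound is that, if $\overline w$ is admissible (convex), then $F[\overline w]\le B\,\lambda_{\min}(D^2\overline w)^{s}\lambda_{\max}(D^2\overline w)^{t}$, so it suffices to force this quantity below the lower bound in $(f_3')$; the two remaining ingredients are the inequality $u\le\varphi_*$ on $\overline\varOmega$ (a consequence of convexity of $u$ and concavity of $\varphi_*$) and the interior-$(a,\eta,\varepsilon)$-type geometry at $x$.

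First I would normalize: fix $x\in\partial\varOmega$ of interior-$(a(x),\eta,\varepsilon)$-type, translate $x$ to the origin, rotate so the interior unit normal is $e_n$, write $a=a(x)$, and let $\mathcal G\subseteq\varOmega$ be the model region (a slight enlargement of $\varOmega_{1/2,x}$) recalled in Section~\ref{sec:pre}, with ``inner face'' on $\partial\varOmega$ and modeled on $\{\,\eta|y'|^{a}<y_n<\varepsilon\,\}$; set $\phi(y):=y_n-\eta|y'|^{a}$, a concave defining function that vanishes on the inner face, is comparable there to $\operatorname{dist}(\,\cdot\,,\partial\varOmega)$, is $a$-homogeneous in $y'$, and satisfies $\phi(y)\gtrsim|y|$ on $\varOmega_{1/2,x}$. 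I would then take the barrier to be an explicit perturbation of $\varphi_*$, essentially
\[
\overline w(y):=\varphi_*(y)+Ky_n-C\,\phi(y)^{\mu(a)},
\]
and verify: (a) $\overline w$ is convex, since $-\phi^{\mu(a)}$ is convex for $0<\mu(a)\le1$ and, by the calibration in (c), its eigenvalues dominate the concavity of $\varphi_*$ --- this is exactly where $\varphi_*\in C^{\mu(a)}(\overline\varOmega)$, with exponent tied to that of the conclusion, is used, the capped case $\mu(a)=1$ being treated directly; (b) $\overline w\ge u$ on $\partial\mathcal G$, because on the inner face $\overline w=\varphi_*+Ky_n\ge\varphi_*\ge u$ (as $y_n\ge0$), while on the outer face $\{y_n\asymp\varepsilon\}$ the term $Ky_n$ beats $C\phi^{\mu(a)}\le C\varepsilon^{\mu(a)}$ once $K$ is large, again using $u\le\varphi_*$; (c) the differential inequality. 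For (c), from
\[
D^2\overline w=D^2\varphi_*-C\bigl[\mu(a)\phi^{\mu(a)-1}D^2\phi+\mu(a)(\mu(a)-1)\phi^{\mu(a)-2}\,D\phi\otimes D\phi\bigr]
\]
one expects near the inner face $\lambda_{\max}(D^2\overline w)\asymp C\phi^{\mu(a)-2}$ (rank-one term) and $\lambda_{\min}(D^2\overline w)\asymp C\phi^{\mu(a)-2/a}$ (from the $a$-homogeneity of $\phi$), together with $|\overline w-\varphi_*|\asymp C\phi^{\mu(a)}$ and $|D\overline w-D\varphi_*|\asymp C\phi^{\mu(a)-1}$; substituting into $(F_2')$ and $(f_3')$, the requirement
\[
B\,\lambda_{\min}(D^2\overline w)^{s}\lambda_{\max}(D^2\overline w)^{t}\ \le\ A\,\operatorname{dist}(\,\cdot\,,\partial\varOmega)^{\beta-n-1}\,|\overline w-\varphi_*|^{-\alpha}\bigl(1+|D\overline w-D\varphi_*|^{2}\bigr)^{\gamma/2}
\]
reduces to an identity between the exponents of $\phi$ --- which is precisely the content of \eqref{eq10}, with $\beta\ge n+1+\gamma$ keeping $\mu(a)\le1$ and the branch $\mu(a)=1$ taking over for large $\beta$ --- plus a scalar balance of the constants that fixes $C$. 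Thus $F[\overline w]\le B\lambda_{\min}^{s}\lambda_{\max}^{t}\le f(x,\overline w,D\overline w)$, i.e. $\overline w$ is a classical supersolution of \eqref{eq01} on $\mathcal G$.

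Comparison then gives $u\le\overline w$ on $\mathcal G$. For $y\in\varOmega_{1/2,x}$, using $\varphi_*(y)\le u(x)+C_*|y|^{\mu(a)}$ (with $C_*$ the H\"older modulus of $\varphi_*$ and $u(x)=\varphi_*(0)$), $Ky_n\le K|y|$, and $\phi(y)\gtrsim|y|$,
\[
u(y)\le\overline w(y)\le u(x)+\bigl(C_*-cC\bigr)|y|^{\mu(a)}+K|y|\le u(x)-m\,(\operatorname{dist}(x,y))^{\mu(a)},
\]
valid on all of $\varOmega_{1/2,x}$ after possibly shrinking $\varepsilon$, since for $\mu(a)<1$ the power term dominates the linear one once $C$ is taken large enough relative to $C_*$ (the case $\mu(a)=1$ being handled directly); the resulting constant $m$ depends only on $a,b,\eta,\varepsilon,A,B,\alpha,\beta,\gamma,s,t,n$. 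This is the first claim. For the second, since $\alpha-\gamma+s+t>0$ the map $a\mapsto\mu(a)$ is nonincreasing, so $\mu(a(x))\le\mu(a_{\min})$; as $\operatorname{dist}(x,y)\le1$ on $\varOmega_{1/2,x}$ this gives $(\operatorname{dist}(x,y))^{\mu(a(x))}\ge(\operatorname{dist}(x,y))^{\mu(a_{\min})}$, completing the proof.

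The main obstacle I anticipate is making (a), (b) and (c) hold simultaneously with the single exponent $\mu(a)$: one must play the blow-up of $\lambda_{\max}$, the degeneracy of $\lambda_{\min}$ (including the directions along which it vanishes), the rate of $\operatorname{dist}(\,\cdot\,,\partial\varOmega)$, and the controlled concavity of $\varphi_*$ against one another so that the power count in (c) closes exactly at $\mu(a)$ while $\overline w$ stays convex and above $u$. This is where the $(a,\eta,\varepsilon)$-geometry of the interior model region and the hypothesis $\varphi_*\in C^{\mu(a)}(\overline\varOmega)$ enter in an essential, quantitative way, and where the nonzero datum $\varphi$ --- appearing through $\varphi_*$ and $D\varphi_*$ in $(f_3')$ --- creates the genuine difficulty over the zero-boundary case. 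Making the comparison rigorous for viscosity solutions of the possibly degenerate operator $F$ is a secondary, standard point.
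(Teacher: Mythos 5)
Your overall strategy --- build a supersolution barrier near the boundary point, compare, and read off the lower bound --- is the paper's strategy, and your use of $(F_2')$, $(f_3')$, and the fact $u\le\varphi_*$ is all correct in spirit. But the way you assemble the barrier creates a genuine gap at the point you yourself flag as delicate, namely step~(a)/(c).

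You claim $\overline w=\varphi_*+Ky_n-C\phi^{\mu(a)}$ is convex because the eigenvalues of $D^2\bigl(-C\phi^{\mu(a)}\bigr)$ ``dominate the concavity of $\varphi_*$,'' and that this is where $\varphi_*\in C^{\mu(a)}(\overline\varOmega)$ enters. This does not work: $C^{\mu}$-H\"older regularity of $\varphi_*$ gives no control whatsoever on $D^2\varphi_*$. Since $\varphi_*$ is by definition an infimum of affine functions, $D^2\varphi_*$ is in general a nonpositive \emph{measure} with possibly singular concentrations (e.g.\ a negative surface delta along any crease of the concave envelope), so no pointwise lower bound $D^2\varphi_*\ge -\kappa I$ is available, however large you take $C$. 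Consequently your formula for $D^2\overline w$ and the asymptotics $\lambda_{\max}(D^2\overline w)\asymp C\phi^{\mu-2}$, $\lambda_{\min}(D^2\overline w)\asymp C\phi^{\mu-2/a}$ cannot be justified, and $(F_2')$ --- a structure condition on $F$ over the positive cone --- cannot be applied to $D^2\overline w$. There is a second, independent problem with the scaling in (c) even for the smooth part alone: at $y'=0$ with $a>2$ the $y'$-block of $D^2\phi$ vanishes to order $|y'|^{a-2}$, so $\lambda_{\min}\bigl(D^2(-\phi^\mu)\bigr)$ degenerates to $0$ on the axis and does not behave like $\phi^{\mu-2/a}$ there.

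The paper avoids both issues by organizing the barrier differently. It takes $W_{\sup}=W+\varphi_*$ with the explicit radial $W(r,x_n)=-\bigl((x_n/\xi)^{2/a}-r^2\bigr)^{1/b}$ from \eqref{eq:W}, whose Hessian eigenvalues are given cleanly by Lemma~\ref{lemma:eigen} and are uniformly comparable to powers of $|W|$ across the whole region $V$ (including the axis, because the $n-2$ eigenvalues $W_r/r$ stay positive). Crucially, the concave perturbation $\varphi_*$ is never differentiated twice: since $D^2\varphi_*\le 0$ and $F$ is degenerate elliptic $(F_1)$, $F\bigl(\lambda(D^2W+D^2\varphi_*)\bigr)\le F\bigl(\lambda(D^2W)\bigr)$, and since $(f_3')$ is written with $|z-\varphi_*|$ and $|q-D\varphi_*|$, the $\varphi_*$-terms cancel exactly in $f(x,W_{\sup},DW_{\sup})$, reducing everything to $\widetilde H[W]$ for the smooth convex $W$. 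This is the key structural observation your proposal misses. Relatedly, the linear correction $Ky_n$ in your barrier is unnecessary and introduces a term that fights the power term in the final estimate; the paper instead handles the outer face $L$ of $V$ by taking $\xi$ large enough (equation \eqref{eq4.8}), and handles $y\in\varOmega_{1/2,P}$ away from the local region by monotonicity of $u$ along rays, which you would also need to do rather than ``shrinking $\varepsilon$.''

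If you replace step (a) with the degenerate-ellipticity reduction above (so that only $D^2(-C\phi^{\mu})$ needs to be analyzed), and fix the axis degeneracy of $-\phi^{\mu}$ --- for instance by switching to the paper's $W$, which is designed precisely to have the right uniform eigenvalue scaling on $V$ --- the rest of your comparison argument and the passage from $\mu(a(x))$ to $\mu(a_{\min})$ are fine.
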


		Our fourth result concerns the pointwise boundary estimates for the zero Dirichlet boundary problem, i.e. 
		the following particular case of the problem \eqref{eq01}-\eqref{eq01boundary} with 
	\begin{align}
		F(\lambda_{1}(D^{2}u),\cdot \cdot \cdot ,\lambda_{n}(D^{2}u))&=B(\lambda_{\min}(D^{2}u))^{s}(\lambda_{\max}(D^{2}u))^{t},\label{eq:Fcase}\\
		f(x,u,Du)&=A \operatorname{dist}(x, \partial \varOmega)^{\beta-n-1}|u|^{-\alpha}\left(1+|Du|^2\right)^{\frac{\gamma}{2}},\label{eq:fcase}\\
		\varphi(x)&\equiv0.\label{eq:varphicase}
	\end{align}

	\begin{theorem}\label{thm1}
		Suppose that  $ \varOmega \subseteq \mathbb{R} ^{n} $ 
		is a bounded convex domain,  $\mu$ satisfies \eqref{eq10}, $\varphi$ satisfies \eqref{eq:varphicase}, 
		$F$ satisfies $(F_1)$, $(F_2)$ and $(F_2')$, i.e. $F$ can be written as \eqref{eq:Fcase}, and 
		$f$ satisfies $(f_1)$, $(f_2)$, $(f_3)$ with $\varphi^*\equiv0$, and  $(f_3')$ with $\varphi_*\equiv0$, i.e. $f$ can be written as \eqref{eq:fcase}.  If $u$ is a convex viscosity solution to the problem (\ref{eq01})-\eqref{eq01boundary},
		then for any $x\in \partial\varOmega$ that is of some exterior-$(a(x),\eta)$-type and  interior-$(a(x),\eta',\varepsilon)$-type with $a(x)\in[1,+\infty)$,
		for any $y\in \varOmega_{1/2,x}$,  
		there exist constants 
		$M=C(a, \eta, A,B, \alpha, \beta, \gamma, s, t, \operatorname{\operatorname{diam}}(\varOmega), n)>0$ and $m=C(a, b,\eta',\varepsilon, A,B,\alpha,\beta,\gamma,s,t, n)>0$ such that  
		\begin{equation}\label{eq09}
			m\left(\operatorname{dist}(x,y)\right)^{\mu(a(x))} \leq \vert u(y)-u(x) \vert\leq M\left(\operatorname{dist}(x,y)\right)^{\mu(a(x))}.
		\end{equation}
		Moreover, if every boundary point $x\in\partial\varOmega$ is of some exterior-$(a(x),\eta)$-type with $a(x)\in[2,+\infty)$, 
		we have $$u\in C^{\mu(a_{\max})}(\overline{\varOmega} )$$ and 
		\begin{equation}\label{eq11}
			| u\vert_{C^{\mu(a_{\max})}(\overline{\varOmega} )} \leq C(a_{\max}, \eta, A,B, \alpha, \beta, \gamma, s, t, \operatorname{\operatorname{diam}}(\varOmega), n).
		\end{equation}
	\end{theorem}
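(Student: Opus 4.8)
The plan is to obtain Theorem \ref{thm1} as the specialization of the three general results already proved -- Theorem \ref{thm1-1} (pointwise boundary upper bound), Theorem \ref{thm1-2} (pointwise boundary lower bound), and Theorem \ref{thm1-1'} (global H\"older regularity) -- to the particular data \eqref{eq:Fcase}--\eqref{eq:varphicase}. Thus the whole argument reduces to checking that, for this data, all the structure conditions invoked by those theorems are satisfied, and then quoting them; no new analytic input is needed, since the substantive estimates have already been carried out in the general setting.

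The preliminary observation is that the envelopes collapse. Because $\varphi\equiv 0$ and $\varOmega$ is a bounded convex domain, every point of $\overline{\varOmega}$ is a convex combination of points of $\partial\varOmega$, so any affine function $\ell$ with $\ell|_{\partial\varOmega}\le 0$ satisfies $\ell\le 0$ on $\overline{\varOmega}$, and symmetrically with $\ge$; comparing with the competitor $\ell\equiv 0$ in the two variational definitions forces $\varphi^*\equiv\varphi_*\equiv 0$. Consequently $D\varphi^*\equiv D\varphi_*\equiv 0$, $\varphi^*\in C^{\mu}(\overline{\varOmega})$ with $|\varphi^*|_{C^{\mu}(\overline{\varOmega})}=0$ for every exponent $\mu$, and $\varphi\in C^{\mu}(\partial\varOmega)$ trivially. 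Next, $F(\lambda)=B(\lambda_{\min})^{s}(\lambda_{\max})^{t}$ with $B>0$, $s,t\ge 0$ is monotone non-decreasing in each eigenvalue on the cone of eigenvalues of convex functions, hence $(F_1)$ holds, while $(F_2)$ and $(F_2')$ hold with equality; and with $\varphi^*\equiv\varphi_*\equiv 0$ the function $f(x,z,q)=A\operatorname{dist}(x,\partial\varOmega)^{\beta-n-1}|z|^{-\alpha}(1+|q|^2)^{\gamma/2}$ on $\varOmega\times(-\infty,0)\times\mathbb{R}^n$ is continuous (since $z<0$ keeps $|z|^{-\alpha}$ finite), is non-decreasing in $z$ on $(-\infty,0)$ -- which is exactly $(f_2)$, and is consistent with the form \eqref{eq:fcase} precisely when $\alpha\ge 0$ or $A=0$ -- and realizes both $(f_3)$ and $(f_3')$ with equality.

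With these verifications in hand the argument closes quickly. Fix a boundary point $x$ that is of exterior-$(a(x),\eta)$-type and interior-$(a(x),\eta',\varepsilon)$-type with $a(x)\in[1,+\infty)$, and let $y\in\varOmega_{1/2,x}$. The pointwise part of Theorem \ref{thm1-1} gives $|u(y)-u(x)|\le M(\operatorname{dist}(x,y))^{\mu(a(x))}$ and Theorem \ref{thm1-2} gives $|u(y)-u(x)|\ge m(\operatorname{dist}(x,y))^{\mu(a(x))}$, with $M$ and $m$ of the stated dependence; the essential point is that both one-sided estimates carry the \emph{same} exponent $\mu(a(x))$, which is precisely what the simultaneous exterior/interior $(a(x),\cdot)$-type hypothesis buys, so \eqref{eq09} follows at once. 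For the global statement, if every $x\in\partial\varOmega$ is of exterior-$(a(x),\eta)$-type with $a(x)\ge 2$, then $\varOmega$ is an exterior-$(a_{\max},\eta)$-type domain with $a_{\max}\ge 2$, so Theorem \ref{thm1-1'} applies and yields $u\in C^{\mu(a_{\max})}(\overline{\varOmega})$ together with the asserted bound on $|u|_{C^{\mu(a_{\max})}(\overline{\varOmega})}$; since $|\varphi^*|_{C^{\mu(a_{\max})}(\overline{\varOmega})}=0$, the constant there depends only on the quantities listed in \eqref{eq11}. The only step that needs genuine attention -- and the one I would flag as the main obstacle -- is the bookkeeping that the dependences of $M$, $m$, and the global constant inherited from Theorems \ref{thm1-1}, \ref{thm1-2}, \ref{thm1-1'} coincide with those claimed here; this is a matter of care rather than of difficulty.
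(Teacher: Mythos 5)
Your proposal is correct and follows essentially the same route as the paper: the authors explicitly remark that once Theorems \ref{thm1-1}, \ref{thm1-1'} and \ref{thm1-2} are established, Theorem \ref{thm1} follows immediately by specialization to the data \eqref{eq:Fcase}--\eqref{eq:varphicase}, which is precisely what you do (including the observations that $\varphi^*\equiv\varphi_*\equiv 0$, that the matching $a(x)$ in the exterior/interior type hypotheses is what equalizes the exponents in the two one-sided bounds, and that $|\varphi^*|_{C^{\mu}}=0$ removes that dependence from the constant in \eqref{eq11}). Your side remark that $(f_2)$ forces $\alpha\geq 0$ (when $A>0$) is a correct and worthwhile consistency check that the paper leaves implicit.
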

	
	\begin{remark}
		In fact,  (\ref{eq09}) arises from a property of the solution, which states that the order of the solution to the problem will equal 
		$\mu(a(x))$ as it approaches the boundary point $x$. The proof of  (\ref{eq11}) relies on the second inequality of   (\ref{eq09}).  
	\end{remark}
	
	Once we have proved Theorem \ref{thm1-1}, Theorem \ref{thm1-1'} and Theorem \ref{thm1-2}, then Theorem \ref{thm1} follows immediately. To prove these results, we  will proceed as follows. Firstly, we investigate the properties of pointwise boundary estimates, 
	and   propose the concepts of upper-$(\mu,M)$-type and lower-$(\nu,m)$-type  (see Definition \ref{def2}) for this purpose. Next, we  demonstrate the validity of  global H\"older estimates  
	based on pointwise boundary estimates.  
	Finally, we  show that the solution to the problem \eqref{eq01}-\eqref{eq01boundary} satisfies  (\ref{eq09}) and determine that the global H\"older  regularity satisfies (\ref{eq10}). 
	
	In this paper, the case $a\in [2,+\infty)$ is the principal part, which is  
	dealt with in 
	Sections \ref{sec:sub} and \ref{sec:super}. We leave the case $a\in [1,2)$ and other discussions to  Section \ref{sec:discussion}. Precisely, 
	the rest of this paper is organized as follows. In Section \ref{sec:pre}, we present basic notations,  preliminary estimates and several assumptions to simplify the proof. Section  \ref{sec:sub} is devoted to the main proof of boundary upper bound estimates (i.e. Theorem \ref{thm1-1'}, Case $a\in [2,+\infty)$ in Theorem \ref{thm1-1} and the first part of Theorem \ref{thm1}) by constructing suitable subsolutions, while Section \ref{sec:super} is for the main proof of boundary lower bound estimates (i.e. Case $a\in [2,+\infty)$ in  Theorem \ref{thm1-2} and the second part of Theorem \ref{thm1}) by  constructing appropriate supersolutions.  Some further discussions and applications  of our results are given  in  Section \ref{sec:discussion}.  
	Finally, we provide some detailed computations on the barrier function $W$   in the appendix.

	\section{Preliminaries}\label{sec:pre}
	
	In this section, we propose some basic definitions and relevant lemmas for next sections.
	
	\subsection{Barrier function}  
	
	We consider functions as
	$W(x)=W(r, x_{n})$, where
	$x=\left(x^{\prime}, x_n\right), x^{\prime}=\left(x_1, \cdots, x_{n-1}\right)$ and $r=\left|x^{\prime}\right|=\sqrt{x_1^2+\cdots+x_{n-1}^2}$.
	Denote
	\begin{align*}
		&H[W]:=F(\lambda_1(D^2W),\cdots,\lambda_n(D^2W))\cdot [f(x,W,DW)]^{-1},\stepcounter{equation}\tag{\theequation}\label{eq37-HW}\\
		&\widetilde{H}[W]:=F(\lambda_1(D^2W),\cdots,\lambda_n(D^2W))\cdot [\widetilde{f}(x,W,DW)]^{-1},\stepcounter{equation}\tag{\theequation}\label{eq37}
	\end{align*}
where 
\begin{equation}\label{eq-widetildef}
	\widetilde{f}(x,W,DW):= A^{-1} \operatorname{dist}(x, \partial \varOmega)^{n+1-\beta}|W|^\alpha\left(1+|DW|^2\right)^{-\frac{\gamma}{2}}.
\end{equation}
Write for $i,j \in\{1,2, \cdots, n\}$ that $$ W_r:=\frac{\partial W}{\partial r}, \ \  W_i:=\frac{\partial W}{\partial x_i}, \ \   W_{ij}:=\frac{\partial^2 W}{\partial x_i\partial x_j}.$$ 
	
	We recall the following lemma from \cite{Li-Li-N,Li-Li-F} about  eigenvalues of the Hessian matrix $D^2 W$.
	
	\begin{lemma}\label{lemma:eigen}
		If the function $W$ satisfies
		$$
		W_{r r}>0, \quad W_{n n}>0, \quad W_{r r} \cdot W_{n n}-\left|W_{r n}\right|^2>0 \quad \text { in } \varOmega,
		$$
		then 	all the $n$ eigenvalues of the Hessian matrix $D^2 W$ are 
		\[\underbrace{\frac{W_r}{r}, \cdots, \frac{W_r}{r}}_{n-2\ \text{terms}},\underbrace{\frac{W_{r r}+W_{n n}-\sqrt{\left(W_{r r}-W_{n n}\right)^2+4\left|W_{r n}\right|^2}}{2}}_{:=\lambda_-(D^2 W)},\underbrace{\frac{W_{r r}+W_{n n}+\sqrt{\left(W_{r r}-W_{n n}\right)^2+4\left|W_{r n}\right|^2}}{2}}_{:=\lambda_+(D^2 W)},\]
		where $\lambda_-(D^2W)$ and $\lambda_+(D^2W)$ satisfy the following estimates:
		\begin{equation}\label{eq:eigen-relation}
			\lambda_-(D^2 W)\in\left[\frac{W_{r r} \cdot W_{n n}-\left|W_{r n}\right|^2}{W_{r r}+W_{n n}}, \min \left\{W_{r r}, W_{n n}\right\}\right],\ \ 
			\lambda_+(D^2 W)\in\big[ \max \left\{W_{r r}, W_{n n}\right\}, W_{r r}+W_{n n}\big]. 
		\end{equation}
		Moreover, 
		the trace and determinant of   $D^2 W$ can be written as 
		\begin{equation*}
			\operatorname{tr} D^2W=(n-2)\frac{W_r}{r}+W_{rr}+W_{nn},\ \ 
			\det D^2W=\left(\frac{W_r}{r}\right)^{n-2}\left(W_{r r} \cdot W_{n n}-\left|W_{r n}\right|^2\right),
		\end{equation*}
		and  the minimum and  maximum of all the $n$ eigenvalues of   $D^2 W$ can be bounded as
		\begin{equation}\label{eq:eigen-minmax}
			\lambda_{\min}(D^2 W) \in \left[\min \left\{\frac{W_r}{r}, \frac{W_{r r} \cdot W_{n n}-\left|W_{r n}\right|^2}{W_{r r}+W_{n n}}\right\},\frac{W_r}{r}\right], \ \ 	\lambda_{\max}(D^2 W) \in \left[ W_{n n} ,(n-2)\frac{W_r}{r}+W_{rr}+W_{nn}\right].
		\end{equation}
	\end{lemma}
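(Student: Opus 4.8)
The plan is an explicit computation of the Hessian of a cylindrically symmetric function, followed by a rotational reduction that block-diagonalizes $D^2W$. Writing $W=W(r,x_n)$ with $r=|x'|$, the chain rule gives, for $i,j\in\{1,\dots,n-1\}$,
\[
W_i=W_r\,\frac{x_i}{r},\qquad W_{ij}=\frac{W_r}{r}\,\delta_{ij}+\Bigl(W_{rr}-\frac{W_r}{r}\Bigr)\frac{x_ix_j}{r^2},\qquad W_{in}=W_{rn}\,\frac{x_i}{r},
\]
together with $W_{nn}=\partial_{x_n}^2W$. Since the eigenvalues of $D^2W$ at a fixed point (with $r>0$) are unchanged under rotations of the $x'$-hyperplane, I would rotate so that $x'/r=e_1$; then all cross terms built from $x_ix_j/r^2$ or $x_i/r$ with $i\ge2$ drop out, and $D^2W$ becomes block diagonal, splitting into the scalar block $\tfrac{W_r}{r}I_{n-2}$ on $\mathrm{span}\{e_2,\dots,e_{n-1}\}$ and the $2\times2$ block $\left(\begin{smallmatrix}W_{rr}&W_{rn}\\W_{rn}&W_{nn}\end{smallmatrix}\right)$ on $\mathrm{span}\{e_1,e_n\}$. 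Reading off the spectrum produces the $n-2$ eigenvalues equal to $W_r/r$ and the remaining two eigenvalues $\lambda_\pm(D^2W)=\tfrac12\bigl(W_{rr}+W_{nn}\pm\sqrt{(W_{rr}-W_{nn})^2+4|W_{rn}|^2}\,\bigr)$, exactly the asserted list.

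For the inclusions in \eqref{eq:eigen-relation} I would argue purely with the eigenvalues of a $2\times2$ symmetric matrix $M=\left(\begin{smallmatrix}a&c\\c&b\end{smallmatrix}\right)$, $a=W_{rr}>0$, $b=W_{nn}>0$, $\det M=ab-c^2>0$. From $\lambda_++\lambda_-=a+b$, $\lambda_+\lambda_-=ab-c^2$ and $\sqrt{(a-b)^2+4c^2}\ge|a-b|$ one reads off $\lambda_+\ge\max\{a,b\}$ and $\lambda_-\le\min\{a,b\}$; positivity of $\det M$ forces $\lambda_->0$, hence $\lambda_+=(a+b)-\lambda_-\le a+b$; and $\lambda_-=(ab-c^2)/\lambda_+\ge(ab-c^2)/(a+b)$. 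These are precisely the claimed brackets for $\lambda_-(D^2W)$ and $\lambda_+(D^2W)$.

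The trace and determinant formulas follow at once as the sum and product of the $n$ eigenvalues, $\operatorname{tr}D^2W=(n-2)\tfrac{W_r}{r}+W_{rr}+W_{nn}$ and $\det D^2W=\bigl(\tfrac{W_r}{r}\bigr)^{n-2}(W_{rr}W_{nn}-|W_{rn}|^2)$. For \eqref{eq:eigen-minmax}, note that among the $n$ eigenvalues one has $\lambda_{\min}(D^2W)=\min\{W_r/r,\lambda_-(D^2W)\}$ and $\lambda_{\max}(D^2W)=\max\{W_r/r,\lambda_+(D^2W)\}$; the two endpoints involving $W_r/r$ alone are then immediate since it is itself an eigenvalue, while the remaining endpoints follow by inserting $\lambda_-\ge(W_{rr}W_{nn}-|W_{rn}|^2)/(W_{rr}+W_{nn})$ and $\max\{W_{rr},W_{nn}\}\le\lambda_+\le W_{rr}+W_{nn}$, together with $W_r/r\ge0$ (which holds for the barrier functions used here and is needed only for the upper bound on $\lambda_{\max}$). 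I do not expect a genuine obstacle, the lemma being essentially computational; the step that needs the most care is the rotational reduction to block-diagonal form, since the claimed spectrum depends on precisely those off-diagonal terms vanishing in those coordinates, after which only elementary $2\times2$ linear algebra remains.
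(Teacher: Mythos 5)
Your proof is correct and follows the standard computational route. The paper itself does not prove this lemma — it is quoted from \cite{Li-Li-N,Li-Li-F} — but the approach there is the same: exploit cylindrical symmetry to compute
\[
W_{ij}=\frac{W_r}{r}\delta_{ij}+\Bigl(W_{rr}-\frac{W_r}{r}\Bigr)\frac{x_ix_j}{r^2},\qquad W_{in}=W_{rn}\frac{x_i}{r},
\]
rotate so $x'=re_1$ to block-diagonalize $D^2W$ into $\tfrac{W_r}{r}I_{n-2}\oplus\bigl(\begin{smallmatrix}W_{rr}&W_{rn}\\W_{rn}&W_{nn}\end{smallmatrix}\bigr)$, and read off the spectrum. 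Your derivations of the inclusions for $\lambda_\pm$ from the $2\times2$ block (using $\lambda_++\lambda_-=a+b$, $\lambda_+\lambda_-=ab-c^2>0$, and $\sqrt{(a-b)^2+4c^2}\ge|a-b|$) and of the trace and determinant identities are all sound.

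You are also right to flag that the upper bound $\lambda_{\max}\le(n-2)\tfrac{W_r}{r}+W_{rr}+W_{nn}=\operatorname{tr}D^2W$ tacitly requires $W_r/r\ge0$; the stated hypotheses ($W_{rr}>0$, $W_{nn}>0$, $W_{rr}W_{nn}-|W_{rn}|^2>0$) ensure $\lambda_\pm>0$ but say nothing about the sign of $W_r/r$, so the trace bound on the top eigenvalue would fail for $n\ge3$ if $W_r/r<0$. This is not a defect in your argument — the barrier $W$ of \eqref{eq:W} gives $W_r=\tfrac{2}{b}|W|^{1-b}r$, hence $W_r/r>0$ on $\{r>0\}$, so the extra condition holds in every application — but it is an implicit hypothesis that the lemma's statement does not make explicit, and you did well to isolate it.
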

	
	\begin{definition}\label{sufficient}
		A function $W$ is called a subsolution (supersolution) to the problem  \eqref{eq01}-\eqref{eq01boundary} over $\varOmega$ if it is  not only a subsolution (supersolution) to \eqref{eq01}  in $\varOmega$  but also satisfies 
		\[W\leq (\geq) u\ \text{ on }\partial\varOmega.\] Here, we say 
		$W$ is   a subsolution (supersolution) to  \eqref{eq01} if  
		\[F(\lambda_1(D^2W),\cdots,\lambda_n(D^2W))\geq (\leq)f(x,W,DW) \  \text{ in } \varOmega,\]
		i.e. 
		\[F(\lambda_1(D^2W),\cdots,\lambda_n(D^2W))-f(x,W,DW)\geq (\leq) 0 \  \text{ in } \varOmega,\]
		which is equivalent to  
		\[H[W]\geq (\leq) 1 \  \text{ in } \varOmega.\] 
	\end{definition}
	
	Following  \cite{Jian-Wang-Adv,Jian-Li,Li-Li-F,Li-Li-N}, we will construct suitable subdomains in $\varOmega$ and different auxiliary functions to be subsolution or supersolution in different domains. To end this subsection, we  prepare the barrier function required for these steps in the rest of this paper. 
	Let $b$ and $\xi$ be two positive constants (to be chosen), and consider the barrier function
	\begin{equation}\label{eq:W}
		W\left(r, x_n\right):=-\left(\left(\frac{x_n}{\xi}\right)^{\frac{2}{a}}-r^2\right)^{\frac{1}{b}}.
	\end{equation}
In next sections, we will construct subsolution $W_{\operatorname{sub}}$ (supersolution $W_{\operatorname{sup}}$) by using this barrier function with   $\xi$ sufficiently small (large). 
Here we list some direct and useful calculations for convenience: 
	\begin{align*}
		W_r&=\frac{2}{b}\cdot |W|^{1-b}\cdot r,\\
		W_n&=-\frac{2}{ab}\cdot |W|^{1-b}\cdot\big(\frac{x_n}{\xi}\big)^{\frac{2}{a}-1}\cdot\xi^{-1},\\
		W_{rr}&=\frac{2}{b}\cdot |W|^{1-b}-\frac{4(1-b)}{b^2}\cdot |W|^{1-2b}\cdot r^{2},\\
		W_{nn}&=-\frac{2(2-a)}{a^2b}\cdot |W|^{1-b}\cdot\big(\frac{x_n}{\xi}\big)^{\frac{2}{a}-2}\cdot\xi^{-2}-\frac{4(1-b)}{a^2b^2}\cdot |W|^{1-2b}\cdot\big(\frac{x_n}{\xi}\big)^{\frac{4}{a}-2}\cdot\xi^{-2},\\
		W_{rn}&=\frac{4(1-b)}{ab^2}\cdot |W|^{1-2b}\cdot r\cdot\big(\frac{x_n}{\xi}\big)^{\frac{2}{a}-1}\cdot\xi^{-1}.
	\end{align*}
	
	\subsection{Convex domain}
We first recall
	a careful description about the  convexity of domains from \cite{Jian-Li,Li-Li-N}.
	\begin{definition}\label{def1}
		Suppose that $\varOmega$ is a bounded convex domain in $\mathbb{R}^n$ and $P \in \partial \varOmega$. 
		\begin{enumerate}[(i)]
			\item We say $P$ is an exterior-$(a, \eta)$-type point (or say $\partial \varOmega$ satisfies exterior-$(a, \eta)$-type condition at $P$)  if there exist numbers $a \in[1,+\infty)$ and $\eta>0$ such that after translation and rotation transforms, we have
			$$
			P=O \quad \text { and } \quad \varOmega \subseteq\left\{x \in \mathbb{R}^n: x_n \geq \eta\left|x^{\prime}\right|^a,\  x_n\leq\operatorname{diam}(\varOmega),\  |x'|\leq\operatorname{diam}(\varOmega)\right\}.
			$$ 
			$\varOmega$ is called an exterior-$(a, \eta)$-type domain if its every boundary point is of exterior-$(a, \eta)$-type  with $a \in[2,+\infty)$.
			\item We say $P$ is an interior-$(a, \eta, \varepsilon)$-type point (or say $\partial \varOmega$ satisfies  interior-$(a, \eta, \varepsilon)$-type  condition at $P$) if there exist numbers $a \in[1,+\infty)$, $\eta>0$ and $\varepsilon>0$ such that after translation and rotation transforms, we have
			$$
			P=O \quad \text { and } \quad\left\{x \in \mathbb{R}^n:\left|x^{\prime}\right|<\mu,\  \tfrac{1}{2} \eta\left|x^{\prime}\right|^a < x_n < \tfrac{1}{2} \eta\varepsilon^a\right\} \subseteq \varOmega \subseteq \mathbb{R}_{+}^n.
			$$
		\end{enumerate}
	\end{definition}
	
		\begin{remark}\label{remark:diam}
		For any bounded convex domain, 
		by affine transforms, we may assume that $\operatorname{diam}(\varOmega)<1$.
	\end{remark}

	\begin{remark}\label{remark:etaepsilon}
		The concept of exterior-$(a,\eta)$-type   describes the convexity of boundary point and the convexity of entire boundary via external domains. 
		The smaller  the number $a$, the more convex  the entire boundary (and also the more convex  the  boundary locally). In contrast, the concept of interior-$(a, \eta, \varepsilon)$-type is a local concept, which only describes the convexity of boundary point via internal domains. When $a$ is fixed, provided that the point $(x'=\varepsilon,x_n=\frac{1}{2}\eta\varepsilon^a)$ lies  in $\varOmega$, it holds that the larger the number $\varepsilon$, the more convex the boundary locally.  Furthermore, a boundary point of interior-$(a, \eta, \varepsilon)$-type is also of interior-$(a,\eta',\varepsilon')$-type with some constants $\eta'>\eta$ and $\varepsilon'<\varepsilon$. It shows  that the parameters $\eta$ and $\varepsilon$ are in a generalized inverse proportion. Hence, for a fixed number $a$, without loss of generality, we assume that $\eta$ is sufficiently large,  $\varepsilon$ is sufficiently small, 
		and  $\eta\varepsilon^a$ equals to a positive constant value denoted by $C_{\eta,\varepsilon}$, i.e. 
		\begin{equation}\label{eq4.10}
			\eta\varepsilon^a \equiv   C_{\eta,\varepsilon},
		\end{equation}
		which will be frequently used when we  construct supersolutions in Section \ref{sec:super}. 
	\end{remark}

	\begin{remark}
		Once we have an exterior-$(a,\eta)$-type condition for a convex domain, then constructing an interior-$(a, \eta', \varepsilon)$-type condition can further help us subtly analyze (the upper bound of curvature and hence) the  regularity  of this domain in a local sense. The following Figure \ref{Fig.1} displays descriptions such that after suitable translations and rotations,   $\varOmega$ is an exterior-$(a,\eta)$-type domain and the origin is also an interior-$(a, \eta', \varepsilon)$-type point. To ensure the coherence and significance,   we will merely focus  on  estimates  throughout the entire boundary $\partial\varOmega$ for 	
		the case   $a\in[2,+\infty)$ in the main proof of this paper, and leave   discussions on more refined  estimates at each boundary point to Section \ref{sec:discussion}. Though there is no exterior-$(a,\eta)$-type domain with $a\in [1,2)$, some boundary points can be of exterior-$(a,\eta)$-type with $a\in [1,2)$ and the estimates at these points will also be briefly addressed in Section \ref{sec:discussion}. 
	\end{remark}

	\begin{figure}[htbp]  
		\centering 
		\includegraphics[scale=0.42]{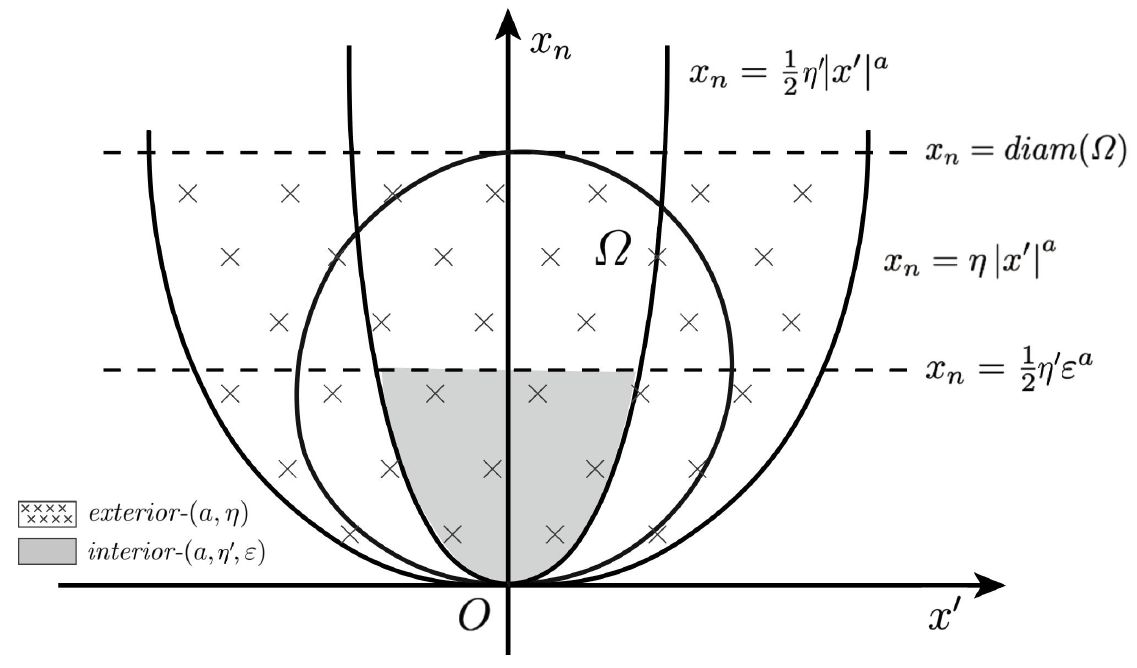} 
		\caption{Exterior-$(a,\eta)$-type domain $\varOmega$ and interior-$(a, \eta', \varepsilon)$-type point $O$. 
		} 
		\label{Fig.1} 
	\end{figure}
	
	\begin{remark}\label{remark:eta'}
		If $\varOmega$ is of exterior-$(a,\eta)$-type at $P$ for some $a\in[1,+\infty)$ and $\eta>0$, then $\varOmega$  is also of  interior-$(a,\eta',\varepsilon)$-type at $P$ for some $\eta'>0$ and $\varepsilon>0$. 
	In fact, for any boundary point, both $\sup\eta$ and $\inf\eta'$ (the best value for $\eta$ and $\eta'$ respectively) are given numbers. 
	\end{remark}
	
	Now we refer to the geometric handling in \cite{Li-Li-F} to bound $\operatorname{dist}(x, \partial \varOmega)$ via $x_n$. The result can be written as the following lemma.
\begin{lemma}[Local estimates of $\operatorname{dist}(x, \partial \varOmega)$]\label{lemma:local-dx}
	Suppose that  $ \varOmega \subseteq \mathbb{R} ^{n} $ 
	is a bounded convex domain and $P \in \partial \varOmega$ is an interior-$(a, \eta, \varepsilon)$-type point with $a \in[1,+\infty)$. Without loss of generality, we assume that
	$$
	P=0 \quad \text { and } \quad\left\{x \in \mathbb{R}^n:\left|x^{\prime}\right|<\mu, \frac{1}{2} \eta\left|x^{\prime}\right|^a<x_n<\frac{1}{2} \eta\varepsilon^a\right\} \subseteq \varOmega \subseteq \mathbb{R}_{+}^n.
	$$
	Consider the subdomain $V\subseteq \varOmega$ as 
	\begin{equation}\label{eq:V}
		V:=\left\{x \in \mathbb{R}^n:\left|x^{\prime}\right|<\big(\frac{1}{4}\big)^{\frac{1}{a}} \varepsilon,\  \eta\left|x^{\prime}\right|^a<x_n < \frac{1}{4} \eta\varepsilon^a\right\},
	\end{equation}
	and split its boundary as $\partial V=L\cup S$, where
	\begin{align*}
		& L:=\left\{x \in \mathbb{R}^n:\left|x^{\prime}\right| \leq\big(\frac{1}{4}\big)^{\frac{1}{a}} \varepsilon,\  x_n=\frac{1}{4} \eta\varepsilon^a\right\}, \\
		& S:=\left\{x \in \mathbb{R}^n:\left|x^{\prime}\right| \leq\big(\frac{1}{4}\big)^{\frac{1}{a}} \varepsilon,\  x_n=\eta\left|x^{\prime}\right|^a\right\}.
	\end{align*}
	Then for any $x\in V$, we have 
	\[\operatorname{dist}(x, \partial \varOmega)\in\left[2^{-1}\big(1+(a\eta \varepsilon^{a-1})^2\big)^{-\frac{1}{2}}\cdot x_n,x_n\right].\]
\end{lemma}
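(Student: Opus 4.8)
The plan is to prove the two inequalities
$\operatorname{dist}(x,\partial\varOmega)\le x_n$ and $\operatorname{dist}(x,\partial\varOmega)\ge 2^{-1}\bigl(1+(a\eta\varepsilon^{a-1})^2\bigr)^{-1/2}x_n$
separately; each is a short, purely geometric consequence of convexity together with the two‑sided sandwich $G\subseteq\varOmega\subseteq\mathbb{R}_{+}^{n}$, where $G:=\{x:|x'|<\mu,\ \tfrac12\eta|x'|^a<x_n<\tfrac12\eta\varepsilon^a\}$ is the interior comparison region of Definition \ref{def1}(ii). (This is essentially the geometric estimate of \cite{Li-Li-F}, reorganized for the present normalization; the decomposition $\partial V=L\cup S$ in the statement is recorded here only for the barrier constructions of later sections and plays no role in the estimate itself.)

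For the upper bound I would note that since $\varOmega\subseteq\mathbb{R}_{+}^{n}$, the closed half‑space $\{w\in\mathbb{R}^{n}:w_n\le 0\}$ is disjoint from $\varOmega$; as $x\in\varOmega$ is an interior point, the segment joining $x$ to its nearest point of this half‑space must cross $\partial\varOmega$, hence $\operatorname{dist}(x,\partial\varOmega)\le\operatorname{dist}\bigl(x,\{w_n\le0\}\bigr)=x_n$. This step uses nothing about $V$.

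For the lower bound, since $G\subseteq\varOmega$ and $x\in V\subseteq G$, any open ball about $x$ contained in $G$ is contained in $\varOmega$, so it suffices to exhibit such a ball of radius $\rho:=2^{-1}(1+L^2)^{-1/2}x_n$, where $L:=a\eta\varepsilon^{a-1}$. The boundary of $G$ consists of the flat top $\{w_n=\tfrac12\eta\varepsilon^a\}$, the lateral face $\{|w'|=\mu\}$, and the lower graph $\Gamma:=\{w_n=\tfrac12\eta|w'|^a\}$. Because $x\in V$ forces $x_n<\tfrac14\eta\varepsilon^a$, $|x'|<(1/4)^{1/a}\varepsilon$, and $\rho<\tfrac12 x_n$, the ball trivially stays below the top and strictly inside the lateral face; the only substantive point is that it stays above $\Gamma$, i.e. $x_n+\rho v_n\ge\tfrac12\eta|x'+\rho v'|^a$ for every unit vector $v=(v',v_n)$. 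I would deduce this from: (i) monotonicity and convexity of $t\mapsto\tfrac12\eta t^a$ on $[0,\infty)$ (legitimate since $a\ge1$), which bounds the right‑hand side by $\tfrac12\eta|x'|^a+L\rho|v'|$, using that $L$ dominates the slope $\tfrac12\eta a s^{a-1}$ of $\Gamma$ for all $0\le s\le|x'|+\rho|v'|$ that occur; (ii) the defining inequality $\tfrac12\eta|x'|^a<\tfrac12 x_n$ of $V$; and (iii) the Cauchy--Schwarz bound $\sup_{|v|\le1}\bigl(L|v'|-v_n\bigr)=\sqrt{1+L^2}$. Combining these, the desired inequality reduces to $\rho\sqrt{1+L^2}\le\tfrac12 x_n$, which holds with equality for the chosen $\rho$, and the lower bound follows.

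The hard part will be step (i): confirming that the portion of $\Gamma$ relevant to this ball has slope at most $a\eta\varepsilon^{a-1}$, i.e. that no competing unit direction $v$, together with the base point $x'$, drives the argument $|x'+\tau\rho v'|$ ($\tau\in[0,1]$) out of the range where that slope bound is valid. This is exactly what the scaling factors $(1/4)^{1/a}$ and $\tfrac14$ built into $V$, and the size conventions on $\eta,\varepsilon$ (and on $\mu$) from Remark \ref{remark:etaepsilon}, are designed to guarantee; once that range is pinned down, the constant $\sqrt{1+(a\eta\varepsilon^{a-1})^2}$ is just the Euclidean length of the steepest admissible tangent direction of $\Gamma$, and everything else is routine bookkeeping.
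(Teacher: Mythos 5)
Your proposal is correct, and the lower bound is obtained by a genuinely different route from the paper. Both arguments work inside the same intermediate comparison region $\widetilde{V}$ (your $G$), but the paper then argues via the nearest boundary point: it locates $z\in\partial\widetilde{V}$ closest to $x$, observes $z$ lies on the lower graph, and bounds $|x-z|$ from below by the altitude from $x$ onto the chord $\overline{y_1y_2}$, where $y_1,y_2$ are the horizontal and vertical hits of $\partial\widetilde{V}$; it then estimates the two legs separately, getting $|x-y_2|\geq\tfrac12 x_n$ directly and $|x-y_2|/|x-y_1|\leq a\eta\varepsilon^{a-1}$ via the mean-value bound $\tfrac{u^a-v^a}{u-v}\leq a u^{a-1}$. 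Your inscribed-ball approach instead checks every unit direction at once and optimizes with Cauchy--Schwarz, arriving at the identical constant $\sqrt{1+(a\eta\varepsilon^{a-1})^2}$; it is more elementary geometrically (it never needs to know where the nearest point sits, nor that the convex lower graph lies on one side of its chords), at the cost of a directional sweep and a range check. The step you flag as delicate does check out and should be written in: since $x_n<\tfrac14\eta\varepsilon^a$ and $\sqrt{1+L^2}\geq L=a\eta\varepsilon^{a-1}$, one has $\rho=\tfrac{x_n}{2\sqrt{1+L^2}}<\tfrac{\eta\varepsilon^a}{8\sqrt{1+L^2}}\leq\tfrac{\varepsilon}{8a}$, hence $|x'+\tau\rho v'|\leq|x'|+\rho<(1/4)^{1/a}\varepsilon+\tfrac{\varepsilon}{8a}<\varepsilon$ for every $a\geq1$ (the function $(1/4)^{1/a}+\tfrac{1}{8a}$ is increasing with limit $1$); and on $[0,\varepsilon]$ the derivative of $t\mapsto\tfrac12\eta t^a$ is at most $\tfrac12 a\eta\varepsilon^{a-1}=\tfrac12 L<L$, so your Lipschitz constant is even slack by a factor of two.
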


\begin{proof}
	For any $x\in V\subseteq\varOmega$,	it is obvious to see $\operatorname{dist}(x, \partial \varOmega)\leq x_n$, and hence our task remains to proving 
	\begin{equation}\label{eq:dx-lowbound}
		\operatorname{dist}(x, \partial \varOmega)\geq 2^{-1}\big(1+(a\eta \varepsilon^{a-1})^2\big)^{-\frac{1}{2}}\cdot x_n.
	\end{equation}
	To prove this, we construct a domain $\widetilde{V}$ such that $V\subseteq\widetilde{V}\subseteq \varOmega $ as  follows:
	\[\widetilde{V}:=\left\{x \in \mathbb{R}^n:\left|x^{\prime}\right|<\varepsilon, \frac{1}{2} \eta\left|x^{\prime}\right|^a<x_n<\frac{1}{2} \eta\varepsilon^a\right\}\]
	with its boundary $\partial\widetilde{V}=\widetilde{L}\cup\widetilde{S}$ as
	\begin{equation*}
		\begin{aligned}
			& \widetilde{L}:=\left\{x \in \mathbb{R}^n:\left|x^{\prime}\right| \leq \varepsilon, x_n=\frac{1}{2}\eta'\varepsilon^a \right\}, \\
			& \widetilde{S}:=\left\{x \in \mathbb{R}^n:\left|x^{\prime}\right| \leq \varepsilon, x_n=\frac{1}{2} \eta'\left|x^{\prime}\right|^a\right\} .
		\end{aligned}
	\end{equation*}
	
	For any $x\in V\subseteq \widetilde{V}$, there exists some point $z\in \partial \widetilde{V}$ such that 
	\begin{equation*} 
		\operatorname{dist}(x,\partial \varOmega) \geq \operatorname{dist}(x,\partial \widetilde{V})= \operatorname{dist}(x,z).
	\end{equation*}
	Let $l_1$ be a straight line passing through $x$ perpendicular to the $x_n$ axis and intersecting the boundary of $\widetilde{V}$ with $y_1$, while $l_2$  be a straight line passing through $x$ parallel to the $x_n$ axis and intersecting the boundary of $\widetilde{V}$ with $y_2$. It is easy to verify that $z\in \partial \widetilde{S}$ and
	\begin{equation*} 
		|x-z| \geq  \operatorname{dist}(x,\overline{y_{1}y_{2}} )=\frac{|x-y_1|\cdot|x-y_2|}{|y_1-y_2|} =\frac{1}{\sqrt{1+\left(\frac{\left|x-y_2\right|}{\left|x-y_1\right|}\right)^2}} \cdot\left|x-y_2\right|.
	\end{equation*}
	Then our target is reduced to the estimate for $|x-y_2|$ and $\frac{|x-y_2|}{|x-y_1|}$. Since $x\in V$, it is clear to  get  
	\begin{equation*}
		\left|x-y_2\right|=x_n-\frac{1}{2} \eta'\left|x^{\prime}\right|^a \geq x_n-\frac{1}{2} x_n=\frac{1}{2} x_n.
	\end{equation*}
	On the other hand, we can infer
	\begin{equation*}
		\begin{aligned}
			\frac{\left|x-y_2\right|}{\left|x-y_1\right|}&=\frac{x_n-\frac{1}{2} \eta'\left|x^{\prime}\right|^a}{\left(\frac{x_n}{\frac{1}{2} \eta'}\right)^{\frac{1}{a}}-\left|x^{\prime}\right|}=\frac{1}{2} \eta' \frac{\left(\left(\frac{x_n}{\frac{1}{2} \eta'}\right)^{\frac{1}{a}}\right)^a-\left|x^{\prime}\right|^a}{\left(\frac{x_n}{\frac{1}{2} \eta'}\right)^{\frac{1}{a}}-\left|x^{\prime}\right|}\leq \frac{1}{2} \eta' a\left(\left(\frac{x_n}{\frac{1}{2} \eta'}\right)^{\frac{1}{a}}\right)^{a-1}\\
			& \leq \frac{1}{2} \eta' a\left(\left(\frac{\frac{1}{4} \eta' \varepsilon^a}{\frac{1}{2} \eta'}\right)^{\frac{1}{a}}\right)^{a-1}  =2^{\frac{1}{a}-2} \eta' a \varepsilon^{a-1} \leq \eta' a \varepsilon^{a-1}.
		\end{aligned}
	\end{equation*}
	Combining the above estimates, 
	we obtain \eqref{eq:dx-lowbound} immediately. The lemma is proved. 
\end{proof}
	
	Let us recall the exterior and  interior sphere conditions, and investigate their relations with exterior-$(a,\eta)$-type and  interior-$(a, \eta, \varepsilon)$-type.
	
	\begin{definition}
		We say that a domain $\varOmega$ in $\mathbb{R}^n$ satisfies exterior (interior) sphere condition at some point  $x_0\in\partial\varOmega$  if  
		there exists an exterior (interior) ball $B_R(y_0)\supseteq\varOmega$ ($B_R(y_0)\subseteq \varOmega$) such that $\partial B_R(y_0)\cap\partial\varOmega=\{x_0\}$. 
	\end{definition}
	
	\begin{lemma}[See Lemma 2.1 in  \cite{Jian-Li}]\label{lemma:Jian-Li}
		For a bounded convex domain,   $(2,\eta)$ type  is equivalent to the    exterior sphere condition. Precisely, any $(2,\eta)$ type domain satisfies  exterior sphere condition with radius $R=\max\{\frac{1}{\eta},\operatorname{diam}(\varOmega)\}$; any bounded convex domain satisfying exterior sphere condition with radius $R$ is of $(2,\frac{1}{2R})$ type.
	\end{lemma}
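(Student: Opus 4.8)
The plan is to prove the two implications separately, in each case reducing the geometric containment of $\varOmega$ in a ball to an elementary inequality between the scalars $|x'|^{2}$ and $x_{n}$ that describe the exterior-$(2,\eta)$-type region.

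For the first implication, I would fix a boundary point of a $(2,\eta)$-type domain and, after a translation and rotation, place it at $O$, so that $\varOmega\subseteq\{x:x_{n}\ge\eta|x'|^{2},\ x_{n}\le D,\ |x'|\le D\}$ with $D:=\operatorname{diam}(\varOmega)$. I would then test the ball $B_{R}(y_{0})$ with $y_{0}=(0',R)$ and $R:=\max\{1/\eta,D\}$, noting that $O\in\partial B_{R}(y_{0})$ and that $B_{R}(y_{0})=\{x:|x'|^{2}+x_{n}^{2}<2Rx_{n}\}$. The inclusion $\overline{\varOmega}\subseteq\overline{B_{R}(y_{0})}$ is then equivalent to $|x'|^{2}\le x_{n}(2R-x_{n})$ for all $x\in\overline{\varOmega}$, which follows from the chain $x_{n}(2R-x_{n})\ge x_{n}(2R-D)\ge x_{n}R\ge\eta R|x'|^{2}\ge|x'|^{2}$, whose four steps use respectively $x_{n}\le D$, $D\le R$, $x_{n}\ge\eta|x'|^{2}$ and $\eta R\ge1$; since $\varOmega$ is open this upgrades to $\varOmega\subseteq B_{R}(y_{0})$. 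It then remains to check that $\partial B_{R}(y_{0})\cap\partial\varOmega=\{O\}$: at any common point all four inequalities above are equalities, which forces either $x'=0$ (hence $x_{n}\in\{0,2R\}$, and $x_{n}=2R\ge2D>D$ is impossible, so $x=O$) or $x'\ne0$ together with $\eta R=1$, $R=D$ and $x_{n}=D$, in which case $|x'|^{2}=x_{n}/\eta=D^{2}$ and $|x-O|=\sqrt{2}\,D>D=\operatorname{diam}(\varOmega)$, a contradiction. This yields the exterior sphere condition with radius $R=\max\{1/\eta,\operatorname{diam}(\varOmega)\}$.

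For the converse, I would fix a boundary point $P$ at which $\varOmega$ satisfies the exterior sphere condition with radius $R$, with exterior ball $B_{R}(y_{0})\supseteq\varOmega$. Since $|y_{0}-P|=R>0$, after translating $P$ to $O$ and rotating $y_{0}$ onto the positive $x_{n}$-axis I may assume $y_{0}=(0',R)$, so that $\varOmega\subseteq B_{R}(y_{0})=\{x:|x'|^{2}+x_{n}^{2}<2Rx_{n}\}$. For $x\in\overline{\varOmega}$ this gives $|x'|^{2}\le|x'|^{2}+x_{n}^{2}\le2Rx_{n}$, i.e. $x_{n}\ge\tfrac{1}{2R}|x'|^{2}$ (in particular $x_{n}\ge0$), while $x_{n}\le\operatorname{diam}(\varOmega)$ and $|x'|\le\operatorname{diam}(\varOmega)$ hold trivially because $O\in\overline{\varOmega}$ forces $|x|\le\operatorname{diam}(\varOmega)$. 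Hence $P$ is an exterior-$(2,\tfrac{1}{2R})$-type point, and since this holds at every boundary point and $2\in[2,+\infty)$, the domain $\varOmega$ is of $(2,\tfrac{1}{2R})$-type.

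The only step that is not pure bookkeeping is the uniqueness of the contact point in the first implication, where one must keep track of which inequalities in the chain can be tight; this is precisely the argument of Lemma 2.1 in \cite{Jian-Li}, which we follow.
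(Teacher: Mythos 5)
Your proof is correct. Note that the paper does not actually prove this lemma --- it is imported by citation from Jian--Li --- so the closest internal comparison is Lemma \ref{lemma:interior}(i), where the paper proves that exterior-$(a,\eta)$-type with $a\in[1,2]$ implies the exterior sphere condition, using the same translation to $P=O$, the same candidate ball $B_R(Re_n)$ with $R=\max\{\frac{1}{\eta},\operatorname{diam}(\varOmega)\}$, and a chain of inequalities establishing $x_n\ge R-\sqrt{R^2-|x'|^2}$. Your first implication is the $a=2$ specialization of that argument, reorganized to work with $|x'|^2\le x_n(2R-x_n)$ directly, which avoids the square roots and makes the equality cases easy to track. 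Two small points in your favor: (i) you actually verify that $\partial B_R(y_0)\cap\partial\varOmega=\{O\}$, which the paper's definition of the exterior sphere condition requires but which the paper's own proof of Lemma \ref{lemma:interior}(i) leaves implicit; and (ii) your converse direction, reducing $\varOmega\subseteq B_R((0',R))$ to $x_n\ge\frac{1}{2R}|x'|^2$, is the standard and correct argument. The only cosmetic remark is that for the converse to produce an exterior-$(2,\frac{1}{2R})$-type \emph{domain} one needs the hypothesis to mean ``exterior sphere condition with radius $R$ at every boundary point,'' which is clearly the intended reading and which you invoke.
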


	\begin{lemma}\label{lemma:interior}
		For a bounded convex domain $\varOmega$ and a boundary point $P\in\partial\varOmega$, we have the following two conclusions:
		\begin{enumerate}[(i)]
			\item a domain that is of  exterior-$(a,\eta)$-type   at $P$ with $a\in [1,2]$  satisfies the exterior sphere condition at $P$;
			\item a domain that is of  interior-$(a,\eta,\varepsilon)$-type at $P$ with $a\in [2,+\infty)$  satisfies the interior sphere condition at $P$.
		\end{enumerate}
	\end{lemma}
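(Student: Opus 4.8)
The plan is to treat parts (i) and (ii) separately; in each case I would first reduce the exterior-$(a,\eta)$-type (resp. interior-$(a,\eta,\varepsilon)$-type) hypothesis at $P$ to its $a=2$ counterpart, and then exhibit an explicit tangent sphere. Since translations, rotations, and homotheties (the last used only to normalize $\operatorname{diam}(\varOmega)<1$ via Remark~\ref{remark:diam}) preserve both sphere conditions, I would work throughout in the coordinates of Definition~\ref{def1} with $P=O$.

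For (i): since $|x'|^a\ge|x'|^2$ whenever $a\le2$ and $|x'|\le\operatorname{diam}(\varOmega)<1$, the exterior-$(a,\eta)$-type inclusion upgrades to $\varOmega\subseteq\{x_n\ge\eta|x'|^2\}\cap\{|x'|\le\operatorname{diam}(\varOmega),\ x_n\le\operatorname{diam}(\varOmega)\}$. I would then take the ball $B_R(y_0)$ through $O$ centered on the positive $x_n$-axis, with $y_0=(0,\ldots,0,R)$ and $R:=\tfrac1\eta+\operatorname{diam}(\varOmega)$, so that $B_R(y_0)=\{x:|x'|^2\le 2Rx_n-x_n^2\}$. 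Every $x\in\varOmega$ has $x_n>0$ (otherwise $x=O\notin\varOmega$), and then $|x'|^2\le x_n/\eta<(2R-x_n)x_n$ because $2R-x_n\ge 2R-\operatorname{diam}(\varOmega)>1/\eta$; hence $\varOmega$ lies strictly inside $B_R(y_0)$. The same strict inequality shows that any point of $\overline\varOmega$ lying on the sphere $\partial B_R(y_0)$ must have $x_n=0$, hence must equal $O$; since $O\in\partial\varOmega$, this yields $\partial B_R(y_0)\cap\partial\varOmega=\{P\}$, which is the exterior sphere condition.

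For (ii): since $|x'|^a\le|x'|^2$ whenever $a\ge2$ and $|x'|\le1$, the interior-$(a,\eta,\varepsilon)$-type region contains $\{|x'|<\mu',\ \tfrac12\eta|x'|^2<x_n<\tfrac12\eta\varepsilon^a\}$ with $\mu':=\min\{\mu,1\}$. I would then take the ball $B_\rho(y_1)$ through $O$ centered on the positive $x_n$-axis, with $y_1=(0,\ldots,0,\rho)$ and $\rho>0$ small, to be chosen. For $x\in\overline{B_\rho(y_1)}\setminus\{O\}$ one has $|x'|\le\rho$, $0<x_n\le2\rho$, and $|x'|^2\le 2\rho x_n-x_n^2$, so that $\tfrac12\eta|x'|^2\le\eta\rho x_n$; choosing $\rho<\min\{\tfrac1\eta,\mu',\tfrac14\eta\varepsilon^a\}$ forces $|x'|<\mu'$, $x_n<\tfrac12\eta\varepsilon^a$ and $\tfrac12\eta|x'|^2<x_n$, hence $\overline{B_\rho(y_1)}\setminus\{O\}\subseteq\varOmega$. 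Thus $B_\rho(y_1)\subseteq\varOmega$ is an interior ball, and since $\partial B_\rho(y_1)\setminus\{O\}\subseteq\varOmega$ is disjoint from $\partial\varOmega$ while $O\in\partial\varOmega$, we obtain $\partial B_\rho(y_1)\cap\partial\varOmega=\{P\}$, which is the interior sphere condition.

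The computations are elementary; the one point that needs genuine care --- and that I would check most carefully --- is that the chosen sphere meets $\partial\varOmega$ only at $P$, which is why in each part I verify that the whole sphere, apart from $O$, lies strictly inside the relevant open region (the open exterior ball, respectively the cusp region defining $\varOmega$ from inside). Alternatively, once the exterior-$(a,\eta)$-type condition at $P$ has been upgraded to an exterior-$(2,\eta)$-type condition at $P$, part (i) may also be obtained from the pointwise analogue of the ball construction underlying Lemma~\ref{lemma:Jian-Li}.
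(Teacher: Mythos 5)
Your argument is essentially the same as the paper's: both construct a sphere through $P=O$ centered on the $x_n$-axis and compare $x_n$ with $R-\sqrt{R^2-|x'|^2}$, using $|x'|^a\ge|x'|^2$ (for $a\le 2$, $|x'|<1$) in part (i) and the reverse inequality (for $a\ge 2$) in part (ii), with only cosmetically different choices of radius. You go a bit further than the paper in explicitly verifying the tangency requirement $\partial B_R(y_0)\cap\partial\varOmega=\{P\}$ from Definition 2.6 --- the paper establishes only the containment $\varOmega\subseteq B_R(Re_n)$ (resp.\ $B_R(Re_n)\subseteq\varOmega$) and leaves the single-point-of-contact claim implicit --- so your write-up is, if anything, slightly more complete on this point.
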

	\begin{proof}
		For (i), if   $\varOmega$ is of  exterior-$(a,\eta)$-type at $P\in\partial\varOmega$, then by translations and rotations, we can assume 
		\begin{equation}\label{eq:relation1}
		P=O\ \  \text{ and } \ \ \varOmega \subseteq\left\{x \in \mathbb{R}^n: x_n \geq \eta\left|x^{\prime}\right|^a,\  x_n\leq\operatorname{diam}(\varOmega),\  |x'|\leq\operatorname{diam}(\varOmega)\right\}.
		\end{equation} 
Hence, for any $x\in\varOmega$, by	taking $R=\max \{ \frac{1}{\eta},\operatorname{diam}(\varOmega) \}$, we obtain
		\begin{equation*}
			x_n\geq \eta |x'|^{a}\geq \eta |x'|^{2}\geq 	\frac{|x'|^2}{R}\geq \frac{|x'|^2}{R+\sqrt{R^2-|x'|^{2}}} =	R-\sqrt{R^2-|x'|^{2}},
		\end{equation*}
	where we have used the condition $a\leq 2$ and Remark \ref{remark:diam} in the second inequality.
This further implies 
		\begin{equation}\label{eq:relation2}
			\left\{x \in \mathbb{R}^n: x_n \geq \eta\left|x^{\prime}\right|^a,\  x_n\leq\operatorname{diam}(\varOmega),\  |x'|\leq\operatorname{diam}(\varOmega)\right\}\subseteq 	B_R(Re_n).
		\end{equation}
		Thus, by \eqref{eq:relation1} and \eqref{eq:relation2}, we obtain 
		\[\varOmega\subseteq B_R(Re_n).\]
		
		For (ii),  if $\varOmega$ is of   interior-$(a, \eta, \varepsilon)$-type  at $P\in\partial\varOmega$, then   by translations and rotations, we can assume that
		\begin{equation}\label{eq:relation3}
		P=O\ \  \text{ and } \ \ 	\varOmega \supseteq\left\{x \in \mathbb{R}^n:\left|x^{\prime}\right|<\mu,\  \tfrac{1}{2} \eta\left|x^{\prime}\right|^a < x_n < \tfrac{1}{2} \eta\varepsilon^a\right\}.
		\end{equation} 
		Now we take $R=\min\{\frac{1}{\eta},\frac{1}{4}\eta\varepsilon^{2}\}$. For any $x\in B_R(Re_n)$, we have $x_n\geq 	R-\sqrt{R^2-|x'|^{2}}$, and then we can infer that
		\begin{equation*}
			x_n\geq 	R-\sqrt{R^2-|x'|^{2}}=\frac{|x'|^2}{R+\sqrt{R^2-|x'|^{2}}}\geq\frac{|x'|^2}{2R}\geq  \frac{1}{2} \eta  |x'|^{2}\geq  \frac{1}{2} \eta  |x'|^{a},
		\end{equation*}
		where we have used the condition $a\geq 2$ and Remark \ref{remark:diam} in the last inequality. 
		This yields
		\begin{equation}\label{eq:relation4}
			B_R(Re_n)\subseteq \left\{x \in \mathbb{R}^n:\left|x^{\prime}\right|<\mu,\  \tfrac{1}{2} \eta\left|x^{\prime}\right|^a < x_n < \tfrac{1}{2} \eta\varepsilon^a\right\}.
		\end{equation}
		Finally, we conclude from \eqref{eq:relation3} and 
		\eqref{eq:relation4} that
		\[\varOmega\supseteq B_R(Re_n).\]
		
		The proof of this lemma is now complete.
	\end{proof}

	We are now in a position to study some local domains near boundary points, which will help us construct global estimates   in next sections.

	\begin{definition}\label{def-3}
		Let $\varOmega$ be a bounded convex domain, $u \in C(\overline{\varOmega} )$  be a convex function, and $P\in\partial\varOmega$.
		Then for  any point  $Q\in\partial\varOmega$ such that $Q\neq P$ and $Q$ is the  nearest point to $P$,  since  $l_{PQ}$ is bounded, there exists some point $y_Q\in l_{PQ}$ such that  $$u(y_{Q}) = \min_{ l_{PQ}}u.$$
		Now we define the following subdomain $\varOmega_{1/2,P}\subseteq\varOmega$  
		generated by the starting point $P$:
		\begin{equation*}
			\varOmega_{1/2,P} :=\left\{ x\in \varOmega :\  x = \lambda P + (1-\lambda)y_{Q},\ \text{ where }\lambda \in (0,1),\ Q\in\partial\varOmega \text{ with }Q\neq P    \right\}.
		\end{equation*}
		Here, the symbol $y_Q\in l_{PQ}$  represents that $y_Q$ is collinear with $P$ and $Q$, and  $\varOmega_{1/2,P}$ is ensured to be a domain after  suitable translations and rotations on coordinate system (see \eqref{assumption-5} for example).
	\end{definition}

	We  take $\varOmega\subseteq \mathbb{R} ^{2}$ as an example to illustrate this definition. 
	Figure \ref{Fig.2} provides the schematic diagram  of $\varOmega_{1/2,P}$  
	for a special case when the convex domain $\varOmega$ is a disk and the graph of convex function $u$ is a cone. 
	Here, the darker the color, the smaller the function value.

	\begin{figure}[htbp] 
		\centering
		\subfigure 
		{
			\includegraphics[scale=0.35]{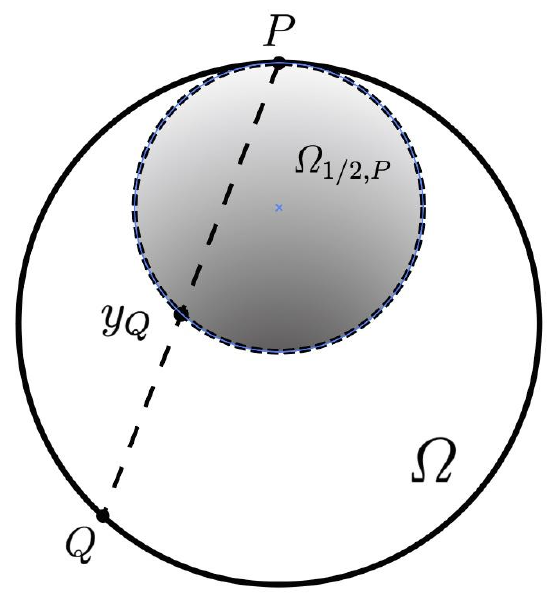} 
		}
		\hspace{20mm}
		\subfigure 
		{
			\includegraphics[scale=0.34]{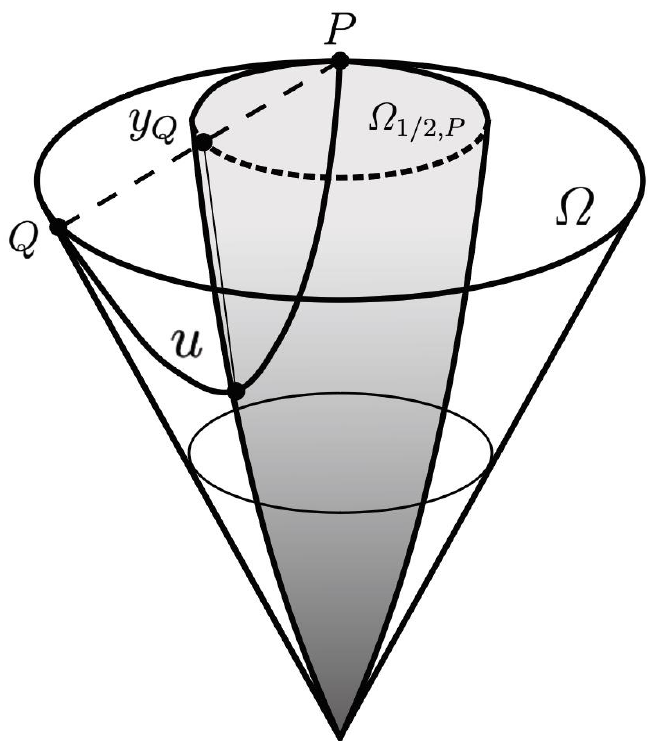}  
		}	
		\hspace{2mm}
		\caption{The schematic diagram of $\varOmega_{1/2,P}$. }
		\label{Fig.2}
	\end{figure}

	\begin{remark}\label{rmk}
		In  Definition \ref{def-3}, since  $u$ is a convex function and takes its minimum over $l_{PQ}$ at  $y_Q$, we point out that $u$ is monotonically decreasing   along $l_{Py_Q}$ (when $x$ moves from $P$ to $y_Q$) and  monotonically increasing along $l_{y_QQ}$ (when $x$ moves from $y_Q$ to $Q$).
	\end{remark}

	Let us list some properties of the domain $\varOmega_{1/2,P}$. The proofs will be omitted since they are easy to obtain.
	\begin{lemma}\label{property} $\varOmega_{1/2,P}$ satisfies the following properties:
		\begin{enumerate}[(i)]
			\item For any $x\in \varOmega_{1/2,P}$, we have $ u(x)\leq u(P)$.  
			\item If $\varOmega$ is of exterior-$(a,\eta)$-type at $P$, then  $\varOmega_{1/2,P} $ is also of exterior-$(a,\eta)$-type at $P$. 
			\item 
			If $\varOmega$ is an exterior-$(a,\eta)$-type domain, then  $\varOmega_{1/2,P} $ is also an exterior-$(a,\eta)$-type domain. 
			\item If $\varOmega$ is an exterior-$(a,\eta)$-type domain and  $\varOmega_{1/2,P}$ is an exterior-$(a',\eta')$-type domain, then we have $a' \leq  a$. 
			\item 
			If $\varOmega$ is of  interior-$(a, \eta, \varepsilon)$-type  at $P$ and  $\varOmega_{1/2,P}$ is of interior-$(a',\eta',\varepsilon')$-type at $P$, then we have $a' \leq  a$.
		\end{enumerate}
	\end{lemma}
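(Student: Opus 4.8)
The plan is to dispatch (i) and (ii) directly from the definitions, reduce (iii) to a statement about the ``outer face'' of $\varOmega_{1/2,P}$, and deduce (iv) and (v) as monotonicity consequences of (iii) (and of its interior counterpart) together with the convention of Remark \ref{remark:eta'} that the exponent attached to a domain is its optimal (infimal) one. For (i), any $x\in\varOmega_{1/2,P}$ lies on a segment $[P,y_Q]$ along which, by Remark \ref{rmk}, $u$ is non-increasing as one moves from $P$ to $y_Q$; hence $u(x)\le u(P)$. For (ii), note that $\varOmega_{1/2,P}\subseteq\varOmega$ and that $P\in\partial\varOmega_{1/2,P}$ (it is the limit of $\lambda P+(1-\lambda)y_Q$ as $\lambda\to1^-$, and it cannot be interior to $\varOmega_{1/2,P}$ since it lies on $\partial\varOmega$); fixing the frame that realizes the exterior-$(a,\eta)$-type condition of $\varOmega$ at $P$, the inclusions $\varOmega_{1/2,P}\subseteq\varOmega\subseteq\{x_n\ge\eta|x'|^a\}$ give the same condition for $\varOmega_{1/2,P}$ at $P$.

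For (iii) I would first record the structure of $\varOmega_{1/2,P}$: one checks that it is a bounded convex domain, star-shaped with respect to $P$, consisting of the portion of each chord from $P$ on which $u$ has not yet passed its minimum (equivalently $\{x\in\varOmega:\ \xi\cdot(x-P)\le0\ \text{for some }\xi\in\partial u(x)\}$), so that $\partial\varOmega_{1/2,P}=\{P\}\cup\Gamma$ with $\Gamma=\{y_Q:Q\in\partial\varOmega,\ Q\ne P\}$ the outer face. At $P$ the exterior-$(a,\eta)$-type condition is precisely (ii). For a point $z=y_Q\in\Gamma$, the idea is to transport the exterior paraboloid of $\varOmega$ at the associated boundary point $Q$ to $z$: pushing $\partial\varOmega$ toward $P$ along chords to the $u$-minimizers does not weaken the uniform $a$-convexity of the boundary, so after a suitable rotation $\varOmega_{1/2,P}$ still lies above an $a$-paraboloid with vertex $z$ and the same $\eta$ (and $\operatorname{diam}(\varOmega_{1/2,P})\le\operatorname{diam}(\varOmega)$ only helps). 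Ranging over all $z\in\Gamma$ and combining with (ii), $\varOmega_{1/2,P}$ is an exterior-$(a,\eta)$-type domain.

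Assertion (iv) is then immediate: by (iii) the optimal exterior exponent of $\varOmega_{1/2,P}$ is $\le a$, and under the stated convention this is the $a'$ in the hypothesis. For (v) one repeats the transport argument at the single point $P$ on the interior side: the chords from $P$ that meet the interior region $\{|x'|<\mu,\ \tfrac{1}{2}\eta|x'|^a<x_n<\tfrac{1}{2}\eta\varepsilon^a\}\subseteq\varOmega$, truncated at the minimizers $y_Q$, fill an interior region with the same exponent $a$ (and a possibly larger $\eta'$, smaller $\varepsilon'$, consistent with Remark \ref{remark:etaepsilon}) inside $\varOmega_{1/2,P}$, whence its optimal interior exponent at $P$ is $\le a$. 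The genuinely non-formal step is the transport of the exterior-$(a,\eta)$-type (resp.\ interior) condition from $Q\in\partial\varOmega$ to $z=y_Q$: one must verify quantitatively that pushing $\partial\varOmega$ inward along chords to the $u$-minimizers preserves $a$-convexity with the constant $\eta$ kept or improved, using only the convexity of $u$ and of $\varOmega$; everything else is bookkeeping with $\varOmega_{1/2,P}\subseteq\varOmega$, Remark \ref{rmk}, and the convention on optimal exponents.
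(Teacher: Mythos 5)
Your proofs of (i) and (ii) are correct and essentially forced: (i) is exactly Remark~\ref{rmk} applied to the segment $[P,y_Q]$ containing $x$, and (ii) follows from $\varOmega_{1/2,P}\subseteq\varOmega$ together with $P\in\partial\varOmega_{1/2,P}\cap\partial\varOmega$, since the exterior-$(a,\eta)$-type condition at $P$ is an inclusion and is preserved under passing to subsets. (The paper explicitly omits the proof of this lemma as ``easy to obtain,'' so there is no reference argument to compare against.)

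For (iii)--(v), however, the ``genuinely non-formal step'' you flag is in fact the whole content, and as written there are two gaps. First, you assert that $\varOmega_{1/2,P}$ is a bounded convex domain; this is not automatic for a general convex $u$ and can actually fail. Take $\varOmega$ the unit disk (an exterior-$(2,\tfrac12)$-type domain by Lemma~\ref{lemma:Jian-Li}), $P=(-1,0)$, and $u(x)=x_1^4+\tau x_1^2+x_2^2$ with $\tau>0$ small (so $u\in C^\infty$ is strictly convex). For $C^1$ strictly convex $u$ one has $\varOmega_{1/2,P}=\{x\in\varOmega:\nabla u(x)\cdot(x-P)<0\}$, which here is $\{x_2^2<\phi(x_1)\}$ with $\phi(x_1)=-(x_1+1)\,x_1\,(2x_1^2+\tau)$; but the bounding curve $x_2=\sqrt{\phi(x_1)}$ is \emph{convex} near $x_1\approx-0.2$ for small $\tau$ (at $\tau=0$, the second difference $\sqrt{\phi(-0.1)}-2\sqrt{\phi(-0.2)}+\sqrt{\phi(-0.3)}\approx 0.042-0.226+0.194>0$), so $\varOmega_{1/2,P}$ is not convex, hence a fortiori not an exterior-$(a,\eta)$-type domain for any $(a,\eta)$. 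Second, even granting convexity, the assertion that ``pushing $\partial\varOmega$ toward $P$ along chords to the $u$-minimizers does not weaken the uniform $a$-convexity'' with the same $\eta$ is a restatement of (iii), not a proof of it: the outer face $\Gamma=\{y_Q\}$ is governed by the level structure of $u$, not by the geometry of $\partial\varOmega$, and there is no reason the exterior paraboloid of $\varOmega$ at $Q$ can be ``transported'' to $y_Q$ while keeping $a$ and $\eta$. Finally, (iv) and (v) are deduced from (iii) by invoking a convention that ``the exponent attached to a domain is its optimal (infimal) one,'' but Remark~\ref{remark:eta'} normalizes only $\eta$ and $\eta'$, not $a$; without some such normalization, being exterior-$(a',\eta')$-type does not pin down $a'$ (a domain of type $(a,\eta)$ is automatically of type $(a'',\eta)$ for every $a''\ge a$), so the inequality $a'\le a$ cannot be read off from (iii).
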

	
	\begin{remark}
		It is not necessarily true that if $\varOmega$ is of  interior-$(a, \eta, \varepsilon)$-type  at $P$, then  $\varOmega_{1/2,P} $ is also of  interior-$(a, \eta, \varepsilon)$-type  at $P$. This can be valid by adding some condition  concerning the convex function over $\varOmega$, see  Proposition \ref{theA6} and Corollary \ref{coro} for example.  We will not discuss it further since this is not the focus of this paper.
	\end{remark}

\subsection{Convex function}
	
	We turn to consider convex functions over convex domains. 
	As a direct consequence of the monotonicity  estimate of convex functions, we have the following lemma.
	
	\begin{lemma}\label{lemma-2}
			Let $\varOmega$ be a bounded convex domain, $u \in C(\overline{\varOmega} )$  be a convex function, and $x\in\partial\varOmega$.
			For any two points $y_1,y_2\in\varOmega$, let $l_{y_1y_2}\cap \partial\varOmega=\{x_1, x_2\}$ and the four related  points be $x_1,y_1,y_2,x_2$ in order. It holds that
		\begin{equation*} 
			|u(y_2)-u(y_1)| \leq \mathop{\max}\{ |u(x_{1}+(y_2-y_1))-u(x_{1})|,|u(x_{2}-(y_2-y_1))-u(x_{2})|\}.
		\end{equation*}
	\end{lemma}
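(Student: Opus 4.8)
The plan is to reduce the two-dimensional-looking statement to the one-variable convexity inequality along the segment $l_{y_1 y_2}$, and then transfer the resulting bound to the endpoints $x_1,x_2$ on $\partial\varOmega$ using monotonicity of the restriction of $u$ to parallel chords. First I would parametrize the chord: write $x_1, y_1, y_2, x_2$ in order along the line $l_{y_1y_2}$, set $v:=y_2-y_1$, and consider the single-variable convex function $g(s):=u(y_1+sv)$ on the interval $[s_1,s_2]$ with $g$ defined on the full chord, where $s=0$ corresponds to $y_1$ and $s=1$ to $y_2$. Convexity of $u$ gives convexity of $g$, so the difference quotient $s\mapsto \frac{g(s+1)-g(s)}{1}$ (whenever both arguments lie in $[s_1,s_2]$) is nondecreasing in $s$. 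Evaluating this monotone difference quotient at its extreme admissible positions — namely $s=s_1$ (left endpoint, giving the pair $x_1$ and $x_1+v$) and $s=s_2-1$ (right endpoint, giving the pair $x_2-v$ and $x_2$) — sandwiches $g(1)-g(0)=u(y_2)-u(y_1)$ between $u(x_1+v)-u(x_1)$ and $u(x_2)-u(x_2-v)$. Hence $|u(y_2)-u(y_1)|$ is at most the maximum of $|u(x_1+v)-u(x_1)|$ and $|u(x_2-v)-u(x_2)|$, which is exactly the claimed inequality once we recall $v=y_2-y_1$.

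The only subtlety — and the step I expect to require the most care — is the \emph{admissibility of the shifted points}: the monotone-difference-quotient argument needs $x_1+v$ and $x_2-v$ (equivalently $x_1+(y_2-y_1)$ and $x_2-(y_2-y_1)$) to lie on the chord, i.e. in $\overline{\varOmega}$, so that $u$ is even defined there and $g$ is defined on a long enough interval. This is where the ordering hypothesis "$x_1,y_1,y_2,x_2$ in order" is used: it guarantees $|x_1 x_2|\ge |y_1 y_2| + \dots$, more precisely that the segment from $x_1$ to $x_2$ has length at least $|y_2-y_1|$ beyond each of $y_1,y_2$ so that both translates stay inside the chord. I would spell this out by noting $\operatorname{dist}(x_1,y_1)\ge 0$ forces $x_1+v$ to lie between $y_1+v=y_2$ (if $\operatorname{dist}(x_1,y_1)=0$) and further toward $x_2$, and symmetrically for $x_2-v$; in all cases both points are on the closed chord $\overline{x_1x_2}\subseteq\overline{\varOmega}$, so the monotonicity of difference quotients of the convex function $g$ applies on $[s_1,s_2]$ as needed.

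With admissibility in hand the inequality is immediate: convexity of $g$ yields, for $s_1\le s\le s'\le s_2-1$,
\begin{equation*}
	g(s+1)-g(s)\le g(s'+1)-g(s'),
\end{equation*}
and applying this with $s=s_1$, $s'=s_2-1$, and with the intermediate value $s$ such that $s=0$, gives both
\begin{equation*}
	u(x_1+v)-u(x_1)\ \le\ u(y_2)-u(y_1)\ \le\ u(x_2)-u(x_2-v).
\end{equation*}
Taking absolute values and bounding each side by the larger of the two quantities $|u(x_1+(y_2-y_1))-u(x_1)|$ and $|u(x_2-(y_2-y_1))-u(x_2)|$ finishes the proof. (If $y_1=y_2$ the statement is trivial, so one may assume $v\neq 0$ throughout.)
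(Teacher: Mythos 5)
Your proof is correct and uses exactly the mechanism the paper indicates: the paper gives no explicit proof but cites ``the monotonicity estimate of convex functions,'' i.e. the nondecreasing difference quotient $s\mapsto g(s+1)-g(s)$ for the one-variable convex restriction along the chord, which is precisely your argument. The admissibility check (that $x_1+v$ and $x_2-v$ lie on $\overline{x_1x_2}\subseteq\overline{\varOmega}$) and the final passage from the sandwich $a\le c\le b$ to $|c|\le\max\{|a|,|b|\}$ are both handled appropriately.
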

	
	Hence we get an upper bound concerning the deviation of values of convex functions at  different points in convex domains via near boundary estimates. It is time to further analyze this upper bound   
	and use the boundary linear approximation of convex functions over convex domains to give another classification for convex domains.

	\begin{definition}\label{def2}
		Let $\varOmega$ be a bounded convex domain, $x\in\partial\varOmega$, and $u$ be a convex function over $\varOmega$. 
		\begin{enumerate}[(i)]
			\item We say  $x$ is an  upper-$(\mu,M)$-type point about $u$ if for any  $y\in\varOmega_{1/2,x}$, there exist constants  $\mu\in(0,1]$ and $M>0$ such that 
			\begin{equation}\label{eq12}
				|u(y)-u(x)|\leq M\left(\operatorname{dist}(y,x)\right)^{\mu}.
			\end{equation}
			$\varOmega$ is called an upper-$(\mu,M)$-type domain about $u$ if its every boundary point is of upper-$(\mu,M)$-type. 
			\item We say   $x$ is a lower-$(\nu,m)$-type point about $u$ if for any  $y\in\varOmega_{1/2,x}$, there exist constants  $\nu\in(0,1]$ and $m>0$ such that 
			\begin{equation}\label{eq12-2}
				|u(y)-u(x)|\geq m\left(\operatorname{dist}(y,x)\right)^{\nu}.
			\end{equation}
			$\varOmega$ is called a lower-$(\nu,m)$-type domain about $u$ if its every boundary point is of lower-$(\nu,m)$-type. 
		\end{enumerate}
	\end{definition}

	\begin{remark}
		This definition gives  careful descriptions on the  convexity of functions over convex domains. 
		The equations \eqref{eq12} \eqref{eq12-2} mean that  
		the difference between the function's value at any point near boundary  
		and its value at the boundary has  upper and lower bound estimates. As depicted in Figure \ref{Fig.3}, these bounds depend on the distance from the point to the boundary and 
		specify the  order of this dependence.
	\end{remark}
	
		\begin{figure}[htbp]  
		\centering 
		\includegraphics[scale=0.49]{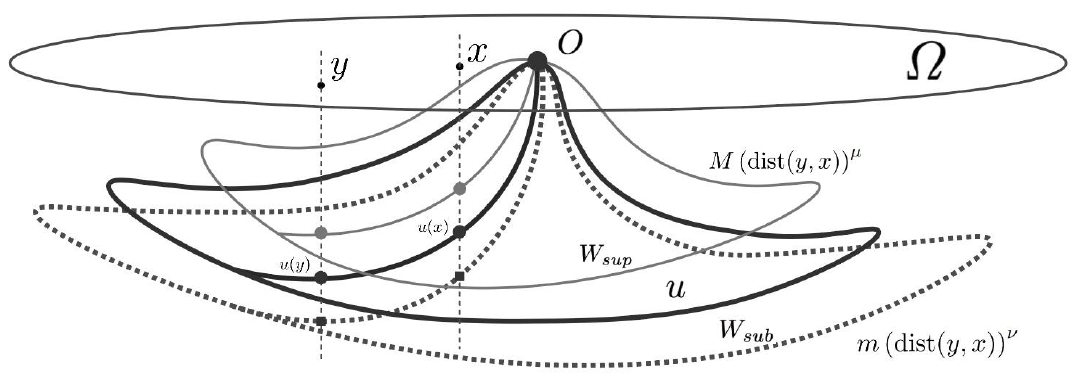} 
		\caption{Boundary linear approximation.} 
		\label{Fig.3} 
	\end{figure}

	\begin{remark}
	If $\varOmega$ is an exterior-$(a,\eta)$-type domain with $a\in [2,+\infty)$, then the following two distances $$\operatorname{dist}(y,\partial\varOmega) \sim  \operatorname{dist}(y,x)$$ are of the same order when $x\in \partial\varOmega$. As a result, in Definition \ref{def2} for exterior-$(a,\eta)$-type domains with $a\in [2,+\infty)$, the term $\operatorname{dist}(y,x)$ in \eqref{eq12} \eqref{eq12-2} 
	can be changed to that $\operatorname{dist}(y,\partial\varOmega)$.
\end{remark}
	
	\begin{remark}
		In Definition \ref{def2} (i), 
		the condition $y\in\varOmega_{1/2,x}$ can be changed to that $y\in\varOmega$. The resulting new definition of upper-$(\mu,M)$-type is equivalent to Definition  \ref{def2} (i) based on Definition \ref{def-3} and the fact that $u$ is convex over $\varOmega$. However, we cannot make this revision in Definition \ref{def2} (ii). 
	\end{remark}

\begin{remark}
	Let $\varOmega$ be a bounded convex domain, $u \in C(\overline{\varOmega} )$  be a convex function, and $x\in\partial\varOmega$. Assume there is a point $x_0\in\varOmega$ such that 
	\[u(x_0)=\min_{\varOmega}u.\]
\begin{enumerate}[(i)]
\item If	for   any $y\in l_{x_0x}$, there exist constants  $\mu\in(0,1]$ and $M>0$ such that 
\[|u(y)-u(x)|\leq M\left(\operatorname{dist}(y,x)\right)^{\mu},\]
then	it holds for any $y\in\varOmega_{1/2,x}$ that 
\[|u(y)-u(x)|\leq M\left(\operatorname{dist}(y,x)\right)^{\mu},\]
i.e. $\varOmega$ is an upper-$(\mu,M)$-type domain. 
\item If	for   any $y\in l_{x_0x}$, there exist constants  $\nu\in(0,1]$ and $m>0$ such that 
\[|u(y)-u(x)|\geq m\left(\operatorname{dist}(y,x)\right)^{\nu},\]
then	it holds for any $y\in\varOmega_{1/2,x}$ that 
\[|u(y)-u(x)|\geq m\left(\operatorname{dist}(y,x)\right)^{\nu},\]
i.e. $\varOmega$ is a lower-$(\nu,m)$-type domain. 
\end{enumerate}
\end{remark}

	\begin{remark}\label{remark:domain}
		To show that  $\varOmega$ is of upper-$(\mu,M)$-type, it suffices to verify \eqref{eq12} for any $x\in\partial\varOmega$ and $y\in\varOmega_{1/2,x}$. This can be transformed into examining the following estimate when we fix $x\in\partial\varOmega$ and $y\in\varOmega_{1/2,x}$:
		\begin{equation}\label{eq12'}
			\vert u(x+(y_{2}-y_{1})) - u(x)\vert \leq M\left(\operatorname{dist}(x+(y_{2}-y_{1}),x)\right)^{\mu},\ \ \ \  \forall y_1,y_2\in l_{xy},
		\end{equation}	
		which intuitively means that for any infinite sequence of points approaching the boundary via linear approximation (along some straight line), the diﬀerence between the function’s value at any
		point in the sequence and its value at the boundary has an upper bound estimate.  
		On the other hand, if  $\varOmega$ is of upper-$(\mu,M)$-type, we can also employ \eqref{eq12'} to match with Lemma \ref{lemma-2} to further imply interior upper bound estimates, which will be immediately used in the proof of Lemma \ref{lemma-new}. 
	\end{remark}

	Based on Lemma \ref{lemma-2} and Definition \ref{def2}, we are now ready to introduce the following global H\"older regularity result for convex functions with nonzero boundary condition.

	\begin{lemma}\label{lemma-new}
		Let $\varOmega$ be a bounded convex domain and $u \in C(\overline{\varOmega} )$  be a convex function. Suppose that  
		$u| _{\partial\varOmega} = \varphi$, where  $\varphi \in C(\partial\varOmega)$. If there exist constants $\mu\in(0,1]$ and $M>0$ such that 
		\[\text{$\varOmega$ is an upper-$(\mu,M)$-type domain about  $u$,}\]
		then we have
		$$ u \in C^{\mu}(\overline{\varOmega}),$$
		and
		\begin{equation}\label{eq:C-alpha}
			\Vert u\Vert _{C^{\mu}(\overline{\varOmega})} \leq M\left(\left(\operatorname{diam}(\varOmega)\right)^{\mu}+1\right)+\mathop{\max}_{\partial\varOmega} |\varphi|.
		\end{equation}
	\end{lemma}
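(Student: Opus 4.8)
The plan is to estimate the sup-norm and the $C^{\mu}$ seminorm of $u$ separately and add them, using as the main tool the ``interior upper bound estimate'' flagged in Remark \ref{remark:domain}, i.e. the reformulation of the upper-$(\mu,M)$-type condition allowed by the remark after Definition \ref{def2}: since $u$ is convex on $\varOmega$, $\varOmega$ being upper-$(\mu,M)$-type about $u$ is equivalent to $|u(y)-u(x)|\le M(\operatorname{dist}(y,x))^{\mu}$ for \emph{all} $x\in\partial\varOmega$ and \emph{all} $y\in\varOmega$ (not merely $y\in\varOmega_{1/2,x}$). First I would handle $\|u\|_{C^0(\overline{\varOmega})}$. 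A convex function on a bounded convex domain attains its maximum on the boundary, so $\max_{\overline{\varOmega}}u=\max_{\partial\varOmega}\varphi\le\max_{\partial\varOmega}|\varphi|$. For the lower bound, fix any $x\in\partial\varOmega$ and apply the upper-$(\mu,M)$ estimate to an arbitrary $y\in\varOmega$: $u(y)\ge\varphi(x)-M(\operatorname{dist}(y,x))^{\mu}\ge -\max_{\partial\varOmega}|\varphi|-M(\operatorname{diam}(\varOmega))^{\mu}$; passing to $\overline{\varOmega}$ by continuity gives $\min_{\overline{\varOmega}}u\ge-\max_{\partial\varOmega}|\varphi|-M(\operatorname{diam}(\varOmega))^{\mu}$, hence $\|u\|_{C^0(\overline{\varOmega})}\le\max_{\partial\varOmega}|\varphi|+M(\operatorname{diam}(\varOmega))^{\mu}$.

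For the seminorm, fix $y_1,y_2\in\varOmega$ and extend the segment $l_{y_1y_2}$ until it meets $\partial\varOmega$ at $x_1,x_2$, ordered as $x_1,y_1,y_2,x_2$. Lemma \ref{lemma-2} gives
\begin{equation*}
|u(y_2)-u(y_1)|\le\max\bigl\{|u(x_1+(y_2-y_1))-u(x_1)|,\ |u(x_2-(y_2-y_1))-u(x_2)|\bigr\}.
\end{equation*}
Since $|y_2-y_1|\le|x_2-x_1|$, both shifted points $x_1+(y_2-y_1)$ and $x_2-(y_2-y_1)$ lie on the chord $\overline{x_1x_2}$ strictly before the interior point $y_2$ (resp. after $y_1$) along it, so by the line-segment principle for convex sets they belong to $\varOmega$; moreover each sits at distance exactly $|y_2-y_1|$ from $x_1$, resp. $x_2$. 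Applying the upper-$(\mu,M)$ estimate (in the $y\in\varOmega$ form) to each term yields $|u(y_2)-u(y_1)|\le M|y_2-y_1|^{\mu}=M(\operatorname{dist}(y_1,y_2))^{\mu}$. Since $u\in C(\overline{\varOmega})$, this extends to all $y_1,y_2\in\overline{\varOmega}$, so $u\in C^{\mu}(\overline{\varOmega})$ with $[u]_{C^{\mu}(\overline{\varOmega})}\le M$.

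Adding the two bounds gives $\|u\|_{C^{\mu}(\overline{\varOmega})}=\|u\|_{C^0(\overline{\varOmega})}+[u]_{C^{\mu}(\overline{\varOmega})}\le\max_{\partial\varOmega}|\varphi|+M(\operatorname{diam}(\varOmega))^{\mu}+M=M\bigl((\operatorname{diam}(\varOmega))^{\mu}+1\bigr)+\max_{\partial\varOmega}|\varphi|$, which is exactly \eqref{eq:C-alpha}. I do not expect a serious obstacle here: the one point needing a little care is checking that the chord-endpoint shifts $x_i\pm(y_2-y_1)$ are genuine interior points of $\varOmega$ so that Lemma \ref{lemma-2} combines with the upper-$(\mu,M)$-type estimate — this is the routine convexity/approximation step mentioned in Remark \ref{remark:domain} — and the harmless passage from interior $y_1,y_2$ to $y_1,y_2\in\overline{\varOmega}$ by continuity of $u$. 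Everything else is bookkeeping.
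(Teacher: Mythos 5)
Your proof is correct and follows essentially the same route as the paper's: both bound the $C^0$ norm and the $C^{0,\mu}$ seminorm separately and add, with the seminorm bound obtained by combining Lemma \ref{lemma-2} with the reformulation of the upper-$(\mu,M)$-type condition from Remark \ref{remark:domain}. The only cosmetic difference is in the $C^0$ bound: you invoke the maximum principle for convex functions to handle the upper bound and the pointwise Hölder estimate for the lower bound, whereas the paper bounds $|u|$ directly by writing interior points as boundary points plus a shift and applying the triangle inequality together with the Hölder estimate; both arrive at the identical constant $M(\operatorname{diam}(\varOmega))^{\mu}+\max_{\partial\varOmega}|\varphi|$, so the distinction is purely presentational.
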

	
	\begin{proof}
		For any two points $y_1,y_2\in\varOmega$, let $l_{y_1y_2}\cap \partial\varOmega=\{x_1, x_2\}$ and the four related  points be $x_1,y_1,y_2,x_2$ in order.  Without loss of generality, we can assume that 
		\begin{equation*}
			\mathop{\max}_{x\in l_{y_{1}y_{2}}\cap  \partial\varOmega} {| u(x+(y_{2}-y_{1})) - u(x)\vert } = | u(x_{1}+(y_{2}-y_{1})) - u(x_{1})|.
		\end{equation*}
		By Lemma \ref{lemma-2}, Definition \ref{def2} and Remark \ref{remark:domain}, it then follows that 
		\begin{align*}
			|u(y_2)-u(y_1)| &\leq    |u(x_{1}+(y_2-y_1))-u(x_{1})| \\ 
			&\leq M\left(\operatorname{dist}(x_1+(y_{2}-y_{1}),x_1)\right)^{\mu}\\
			&=M\left(\operatorname{dist}(y_2,y_1)\right)^{\mu}=M|y_2-y_1|^\mu\\
			&\leq M\left(\operatorname{diam}(\varOmega)\right)^{\mu}.
		\end{align*}
		From these relations, we can infer that
		\begin{align*}
			\Vert u\Vert _{C(\overline{\varOmega})}
			&=\mathop{\sup}_{\varOmega}\vert  u\vert 
			=	\mathop{\max}_{y_1,y_2\in\varOmega\atop x_1\in\partial\varOmega}\vert  u(x_{1}+(y_2-y_1))\vert \\
			&\leq	\mathop{\max}_{y_1,y_2\in\varOmega\atop x_1\in\partial\varOmega}\vert  u(x_{1}+(y_2-y_1))-u(x_1)\vert +\mathop{\max}_{ x_1\in\partial\varOmega}\vert u(x_1)\vert\\
			&\leq M\left(\operatorname{diam}(\varOmega)\right)^{\mu}+\mathop{\max}_{\partial \varOmega}| \varphi\vert,
		\end{align*}
		and
		\begin{align*}
			[u]_{C^{0,\mu}(\overline{\varOmega})} 
			&=  \mathop{\sup}_{y_1,y_2\in\varOmega\atop y_1\neq y_2} \left\{  \frac{|u(y_2)-u(y_1)\vert}{| y_2-y_1\vert^{\mu} } \right\}\leq M.
		\end{align*}
		Finally, adding up the last two inequalities yields the desired estimate \eqref{eq:C-alpha}. Thus the lemma is proved. 
	\end{proof}

	\subsection{Several simplifications}\label{sec:simply}
We collect all the assumptions that we will use to simplify the proof.

	The first assumption is about the relation of parameters. We let
	\begin{equation*}
		\beta <\alpha +s - t + n  +1 - \frac{2s}{a}. 
	\end{equation*}
	In fact, if $\beta \geq \alpha +s - t + n  +1 - \frac{2s}{a}$, then we can always find $\widetilde{\beta} <\alpha +s - t + n  +1 - \frac{2s}{a}$ to replace  $\beta$ and come to the conclusion that 
	$\frac{\widetilde{\beta}-\gamma+2 t-n-1}{\alpha-\gamma+s+t}+\frac{2 s}{a(\alpha-\gamma+s+t)}$ can be arbitrarily chosen in $(0,1]$ (here we can take $\mu=1$). Hence in what follows, we let $\mu$ take the value:
	\begin{equation}\label{assumption-1'}
		\mu(a) = \frac{\beta-\gamma+2t-n-1}{\alpha-\gamma+s+t}+\frac{2 s}{a(\alpha-\gamma+s+t)}.
	\end{equation}

	The others consist of a series of geometric assumptions on convex domains. Globally, due to  
	Definition \ref{def2}, we just need to discuss whether 
	\begin{equation*}
		\text{$\varOmega$ is of  upper-$(\mu,M)$-type at some boundary point}
	\end{equation*} 
	when proving the boundary upper bound estimates, 
	and whether 
	\begin{equation*}
		\text{$\varOmega$ is of  lower-$(\nu,m)$-type at some boundary point}
	\end{equation*} 
	when proving the boundary lower bound estimates. 
	Locally,  let us focus on the $\varOmega_{1/2,P}$ domain, where $P\in\partial\varOmega$. Since the problem \eqref{eq01} is invariant under translations and rotations, without loss of generality, we  assume that 
	\begin{equation}\label{assumption-5}
		\begin{split}
			&	\text{$P=O$},  \\ &\text{$x_{n}$-axis is in the same direction as the inward normal vector of $\varOmega_{1/2,P}$ at $P$},\\ 
			&\text{and $x'$-axis is   parallel to $Du(P)$.}
		\end{split}	
	\end{equation}   
	In particular, when the domain $\varOmega$ is of exterior-$(a,\eta)$-type at $P$, 
	it follows from Lemma \ref{property} (iii) that  $\varOmega_{1/2,P} $ is also
	of exterior-$(a,\eta)$-type at $P$, i.e. 
	\begin{equation*}
		\varOmega_{1/2,P} \subseteq\left\{x \in \mathbb{R}^n: x_n \geq \eta\left|x^{\prime}\right|^a\right\}.
	\end{equation*} 
	Now we choose $\delta \in(0,1)$ and $0<\xi=\xi(\delta, a, \eta)<1$ such that $\xi\left(\frac{1}{\delta}\right)^{\frac{a}{2}} \leq \eta$. Finally, we obtain the following relation among domains:
	\begin{equation}\label{assumption-7}
		\varOmega_{1/2,P}\subseteq\varOmega \subseteq\left\{\left.x \in \mathbb{R}^n\left|x_n \geq \eta\right| x^{\prime}\right|^a\right\} \subseteq\left\{x \in \mathbb{R}^n \left\lvert\, \delta\left(\frac{x_n}{\xi}\right)^{\frac{2}{a}} \geq r^2\right.\right\}.
	\end{equation} 
	We remark here that \eqref{assumption-7} is not required in Section \ref{sec:super} and indeed we only need to discuss the domain $V$ in Section \ref{sec:super}, see Remark \ref{remark:V}.

	The last assumption concerns 
	the boundary function.  After suitable translations if necessary, we suppose 
	\begin{equation}\label{assumption-3}
		\varphi(P)=0.
	\end{equation}
	The assumption \eqref{assumption-3} together with Lemma \ref{property} (ii) 
	further yields
	\begin{equation}\label{eq:assumption}
		u\big|_{\varOmega_{1/2,P}}\leq u(P)= \varphi(P)=0.
	\end{equation}

	\section{Boundary upper bound estimate}\label{sec:sub}
	
	This section is dedicated to proving the right hand side of the inequality (\ref{eq09}) in the case $a\in[2,+\infty)$, where we need boundedness conditions $(F_2)$ and $(f_3)$.

	\subsection{Construction of subsolution}
	By Lemma \ref{lemma-2}, we have constructed the upper bound for deviation of convex functions at different points via the estimates  near the boundary $\partial\varOmega$. However, it is difficult for us to give the corresponding estimates  away from the boundary $\partial\varOmega$. Hence we are not capable of   constructing a subsolution to the problem \eqref{eq01}-\eqref{eq01boundary}  
	in the whole domain $\varOmega$ (as this cannot be done away from  $\partial\varOmega$). 
	We note that a generalized convex solution 
	$u$ to \eqref{eq01}-\eqref{eq01boundary} in $\varOmega$ is also a generalized convex solution to \eqref{eq01}-\eqref{eq01boundary} in    the subdomain $\varOmega_{1/2,P}\subseteq\varOmega$ near  $\partial\varOmega$. Now we turn to choose suitable constants $b>0$ and $0<\xi<1$ such that 
	\begin{equation*}
	W_{\operatorname{sub}}:=W+\varphi^*
	\end{equation*}
(with 
	$W$ in the form of \eqref{eq:W}) is a subsolution   to the problem \eqref{eq01}-\eqref{eq01boundary} over $\varOmega_{1/2,P}$ or its suitable subdomain.

	We first claim that we can always take $\xi$  small enough to obtain
	\begin{equation*} 
		W \leq u \ \ \text{ on }\partial\varOmega_{1/2,P}.
	\end{equation*} 
	Indeed, if $\xi$ is sufficiently small, then there exists some neighborhood of $P$, denoted by $N(P)$,  such that the following holds outside this neighborhood:
	\begin{equation*}
		W \leq u \ \ \text{ on }\partial\varOmega_{1/2,P} \cap N(P)^c;
	\end{equation*}
	while in this neighborhood, by choosing some suitable constant $b$,  we first make the order of  $W$  the same as that of $u$, and then by choosing some sufficiently small constant $\xi$, we also get 
	\begin{equation*}
		W \leq u \ \ \text{ on }\partial\varOmega_{1/2,P} \cap N(P).
	\end{equation*}
	Combining these two cases yields the claim. 
By \eqref{eq:assumption} and the fact that $\varphi^*$ is convex, we always have $\varphi^*\leq 0$ and then $W_{\operatorname{sub}}\leq W$ over $\varOmega$.   Hence 
		\begin{equation}\label{eq31}
		W_{\operatorname{sub}} \leq u \ \ \text{ on }\partial\varOmega_{1/2,P}.
	\end{equation} 
It is clear that the above proof can be also used to prove $W_{\operatorname{sub}} \leq u$ on  the boundary of subdomains of $\varOmega_{1/2,P}$.

	Now we discuss how to choose the parameters $b$ and $\xi$ to make $W_{\operatorname{sub}}$ as a subsolution to  \eqref{eq01}, that is, to satisfy
		\begin{equation}\label{eq:HW}
		H[W_{\operatorname{sub}}] \geq 1. 
	\end{equation}
Based on the condition $(f_3)$ and the fact that $\varphi^*$ is convex, we note that 
\begin{align*}
H[W_{\operatorname{sub}}]&=F(\lambda_1(D^2W_{\operatorname{sub}}),\cdots,\lambda_n(D^2W_{\operatorname{sub}}))\cdot [f(x,W_{\operatorname{sub}},DW_{\operatorname{sub}})]^{-1}\\
&=F(\lambda_1(D^2W+D^2\varphi^*),\cdots,\lambda_n(D^2W+D^2\varphi^*))\cdot [f(x,W+\varphi^*,DW+D\varphi^*)]^{-1}\\
&\geq F(\lambda_1(D^2W),\cdots,\lambda_n(D^2W))\cdot \left[A^{-1} \operatorname{dist}(x, \partial \varOmega)^{n+1-\beta}|W|^\alpha\left(1+|DW|^2\right)^{-\frac{\gamma}{2}}\right]^{-1} \\
&=F(\lambda_1(D^2W),\cdots,\lambda_n(D^2W))\cdot[\widetilde{f}(x,W,DW)]^{-1}=\widetilde{H}[W].
\end{align*}
Hence we only need to ensure that
	\begin{equation}\label{eq:tildeHW}
		\widetilde{H}[W] \geq 1. 
	\end{equation}

	This part mainly refers to   the work \cite{Li-Li-N}, where the equation \eqref{eq01} is equipped with  zero  boundary condition $u= 0$ on $\partial\varOmega$, and $W_{\operatorname{sub}}$ has been constructed to satisfy \eqref{eq:HW} in $\varOmega$ and hence to be a subsolution  in $\varOmega$. 
	Here we  briefly recall the main steps of this construction:
	\begin{itemize}
		\item \textit{Step 1}: We consider $\frac{2(\alpha-\gamma+t)}{\beta-\gamma+2t-n-1}>2$, i.e. $\beta<\alpha-t+n+1$, and distinguish two substeps:
		\begin{itemize}
			\item \textit{Step 1-1}: Let $2 \leq a<\frac{2(\alpha-\gamma+t)}{\beta-\gamma+2t-n-1}$. In this case, we will find $b>1$ and $0<\xi<1$ such that \eqref{eq:HW} holds in $\varOmega_{1/2,P}$  or its suitable subdomain.
			\item \textit{Step 1-2}: Let $\frac{2(\alpha-\gamma+t)}{\beta-\gamma+2t-n-1} \leq a<+\infty$. In such a case, we will find $0<b\leq 1$ and $0<\xi<1$ such that \eqref{eq:HW} holds in $\varOmega_{1/2,P}$ or its suitable subdomain.
		\end{itemize}
		\item \textit{Step 2}: We consider $\frac{2(\alpha-\gamma+t)}{\beta-\gamma+2t-n-1}\leq 2$, i.e. $\beta\geq\alpha-t+n+1$. This case can be treated in the same manner as  in \textit{Step 1-2} since we always have  $\frac{2(\alpha-\gamma+t)}{\beta-\gamma+2t-n-1}\leq 2\leq a<+\infty$.
	\end{itemize}
	Therefore, it suffices to deal with the \textit{Step 1}. 
	The following procedures are required  in both  situations of \textit{Step 1}, and all constants therein are collected in Table \ref{table:constant}.

	\begin{table}[ht] \caption{Collection of constants in \textit{Step 1} of lower bound estimate for the case $a\in [2,+\infty)$}
		\label{table:constant}
		\renewcommand{\arraystretch}{1.8}\centering
		\begin{tabular}{|c|c|c|c|c|}
			\hline
			Symbol & Values in \textit{Step 1-1}
			& Values in \textit{Step 1-2}
			& Sign
			\\ [0.5ex]
			\hline
			$C_1$& $\frac{8(b-1)}{a^2 b^3} (1-\delta)^{a-2}$ & $\frac{8(b-1)}{a^2 b^3}(\delta(a-2)+1) +\frac{4(a-2)}{a^2 b^2}(1-\delta)^{a-1}$ &$C_1>0$ \\\hline
			$C_2$& $\left(\frac{2}{b}+\frac{4(b-1)}{b^2} \delta(1-\delta)^{-1}\right) \left(\frac{2(a-2)}{a^2 b}+\frac{4(b-1)}{a^2 b^2}\right)$ & $ \frac{4(a-2)}{a^2 b^2}+\frac{8(b-1)}{a^2 b^3}(1-\delta)^{a-2} $&$C_2>0$ \\\hline
			$C_3$& \begin{tiny}$
			\left(\frac{2}{b}+\frac{4(b-1)}{b^2} \delta(1-\delta)^{-1}\right) (\operatorname{\operatorname{diam}}(\varOmega))^{2-\frac{2}{a}}+\frac{2(a-2)}{a^2 b}+\frac{4(b-1)}{a^2 b^2}$\end{tiny} &\begin{tiny} $\frac{2}{b} (\operatorname{\operatorname{diam}}(\varOmega))^{2-\frac{2}{a}}+\frac{2(a-2)}{a^2 b}+\frac{4(b-1)}{a^2 b^2}(1-\delta)^{a-2}$\end{tiny} &$C_3>0$\\\hline
			$C_4$& \multicolumn{2}{|c|}{$\min \left\{\frac{2}{b}, \frac{C_1(a, b, \delta)}{C_3(a, b, \delta, \operatorname{\operatorname{diam}}(\varOmega))}\right\}$}  &$C_4>0$ \\\hline
			$C_5$& $  \frac{2(a-2)}{a^2 b}(1-\delta)^{a-1}+  \frac{4(b-1)}{a^2 b^2}(1-\delta)^{a-2}$ & $  \frac{2(a-2)}{a^2 b}(1-\delta)^{a-1}+ \frac{4(b-1)}{a^2 b^2}$   &$C_5>0$  \\\hline
			$C_6$& \multicolumn{2}{|c|}{$1+a^2   
				(\operatorname{\operatorname{diam}}(\varOmega))^{4-\frac{4}{a}}+\frac{a^2 b^2}{4}(1-\delta)^{2-a}    
				(\operatorname{\operatorname{diam}}(\varOmega))^{2-\frac{4}{a b}}$} &$ 
			C_6>0$\\
				\hline
			$C_7$& \multicolumn{2}{|c|}{$\left(\frac{ab}{2}\right)^{\gamma}\cdot\min \left\{1,C_6(a,b,\delta, \operatorname{\operatorname{diam}}(\varOmega))^{-\frac{\gamma}{2}}\right\}   \cdot \min \left\{1,(1-\delta)^{\left(1-\frac{a}{2}\right) \gamma}\right\}$  } &$ 
			C_7>0$   \\\hline
				$C_8$& \multicolumn{2}{|c|}{$\min\{2^{\beta-n-1}\big(1+(a\eta' \varepsilon^{a-1})^2\big)^{\frac{\beta-n-1}{2}},1\}$  } &$ 
			C_8>0$   \\
			\hline
			$C_9$&\multicolumn{2}{|c|}{ $A^{-1}B  C_4(a, b, \delta, \operatorname{\operatorname{diam}}(\varOmega))^s C_5(a, b, \delta)^t  C_7(a, b, \gamma, \delta, \operatorname{\operatorname{diam}}(\varOmega)
			)C_8(a,\eta',\varepsilon,\beta,n)$ }   &$ 
			C_9>0$  \\\hline
			$C_{10}$&\multicolumn{2}{|c|}{ $C_9(a, b, \eta',\varepsilon,A,B,\beta,\gamma,s,t, \delta,     \operatorname{\operatorname{diam}}(\varOmega), n)\cdot \min\left\{(1-\delta)^{\frac{\alpha}{b}+(\frac{a}{2}-\frac{1}{b})\gamma+(\frac{1}{b}-1)s+(\frac{1}{b}-a)t},1
				\right\}$ }   &$ 
			C_{10}>0$  \\\hline
				$C_{11}$&\multicolumn{2}{|c|}{ $(\operatorname{diam}(\varOmega))^{n+1-\beta+\gamma+\frac{2}{ab}\left(\alpha-\gamma+(1-b) s+(1-ab) t\right)}$ }   &$ 
			C_{11}>0$  \\\hline
				$C_{12}$&\multicolumn{2}{|c|}{ $C_{10}(a, b,\eta',\varepsilon, A,B,\alpha,\beta,\gamma,s,t, \delta,      \operatorname{\operatorname{diam}}(\varOmega), n)C_{11}(a,b,\alpha,\beta,\gamma,s,t,\operatorname{diam}(\varOmega),n) $ }   &$ 
			C_{12}>0$  \\\hline
		\end{tabular}
	\end{table}
	
	In view of \eqref{eq:W}, it is trivial to  verify the conditions of Lemma \ref{lemma:eigen} and hence we are able to use \eqref{eq:eigen-relation}:
	\[\frac{W_{r r} \cdot W_{n n}-\left|W_{r n}\right|^2}{W_{r r}+W_{n n}}\leq\lambda_-(D^2 W)\leq \min \left\{W_{r r}, W_{n n}\right\} \leq \max \left\{W_{r r}, W_{n n}\right\}\leq\lambda_+(D^2 W)\leq W_{r r}+W_{n n}.\]
	We  can conduct a series of calculations to derive
	\begin{equation*}
		W_{r r} \cdot W_{n n}-\left|W_{r n}\right|^2 \in\left[C_1(a, b, \delta) \cdot|W|^{2-b-a b} \cdot \xi^{-2}, C_2(a, b, \delta) \cdot|W|^{2-b-a b} \cdot \xi^{-2}\right],
	\end{equation*} 
	and    
	\begin{equation*}
		W_{r r}+W_{n n}\leq C_3(a, b, \delta, \operatorname{\operatorname{diam}}(\varOmega)) \cdot|W|^{1-a b} \cdot \xi^{-2}.
	\end{equation*}
	According to \eqref{eq:eigen-minmax}, it then follows that 
	\begin{align*}
		\lambda_{\min }(D^2 W)  & \geq\min \left\{\frac{W_r}{r}, \frac{W_{r r} \cdot W_{n n}-\left|W_{r n}\right|^2}{W_{r r}+W_{n n}}\right\}
		\geq C_4(a, b, \delta, \operatorname{\operatorname{diam}}(\varOmega)) \cdot|W|^{1-b},\\
		\lambda_{\max}(D^2 W)  & \geq W_{n n}  
		\geq C_5(a, b, \delta) \cdot|W|^{1-a b} \cdot \xi^{-2}.
	\end{align*}
	Thus by $(F_2)$, we can reach the  conclusion that
	\begin{align}\label{eq52}
		F\left(\lambda_1(D^2 W), \cdots, \lambda_n(D^2 W)\right) & \geq B\left(\lambda_{\min }(D^2 W)\right)^s\left(\lambda_{\max }(D^2 W)\right)^t \nonumber\\
		& \geq  B  C_4(a, b, \delta, \operatorname{\operatorname{diam}}(\varOmega))^s C_5(a, b, \delta)^t\cdot|W|^{(1-b) s+(1-a b) t} \cdot \xi^{-2 t}.
	\end{align}

	We can also infer that
	\begin{equation*}
		1+|DW|^2  =1+\left|W_r\right|^2+\left|W_n\right|^2=\left(1+\frac{\left|W_r\right|^2}{\left|W_n\right|^2}+\frac{1}{\left|W_n\right|^2}\right) \cdot\left|W_n\right|^2 
		\in\left[\left|W_n\right|^2,C_6(a,b,\delta,  \operatorname{\operatorname{diam}}(\varOmega) 
		) \cdot\left|W_n\right|^2\right],
	\end{equation*}
	where
	\begin{equation*}
		\left|W_n\right| \in\left[\frac{2}{a b}(1-\delta)^{\frac{a}{2}-1} \cdot|W|^{1-\frac{a b}{2}} \cdot \xi^{-1}, \frac{2}{a b} \cdot|W|^{1-\frac{a b}{2}} \cdot \xi^{-1}\right].
	\end{equation*}
	Hence,  it holds for any $\gamma \in \mathbb{R}$ that
	\begin{equation*}
		\left(1+|DW(x)|^2\right)^{-\frac{\gamma}{2}} \geq  C_7(a, b, \gamma, \delta, \operatorname{\operatorname{diam}}(\varOmega) 
		)  \cdot|W|^{\left(\frac{a b}{2}-1\right) \gamma} \cdot \xi^\gamma.
	\end{equation*}

By Remark \ref{remark:eta'}, we note that 
	$\varOmega_{1/2,P}$ is also of interior-$(a,\eta',\varepsilon)$-type at $P$. Hence  there exists a suitable domain  $V'\subseteq \varOmega_{1/2,P}$  as 
	\begin{equation}\label{def:V'}
		V' =\left\{x \in \mathbb{R}^n:\left|x^{\prime}\right|<\big(\frac{1}{4}\big)^{\frac{1}{a}} \varepsilon,\  \eta'\left|x^{\prime}\right|^a<x_n < \frac{1}{4} \eta'\varepsilon^a\right\}.
	\end{equation}
In order to get the bound of $\operatorname{dist}(x,\partial \varOmega)$ for any  $x\in V'$, we can adopt the same procedure of Lemma \ref{lemma:local-dx}:   first construct a domain $\widetilde{V}'$ satisfying $V'\subseteq\widetilde{V}'\subseteq \varOmega_{1/2,P}$ as  
	\[\widetilde{V}':=\left\{x \in \mathbb{R}^n:\left|x^{\prime}\right|<\varepsilon, \frac{1}{2} \eta'\left|x^{\prime}\right|^a<x_n<\frac{1}{2} \eta'\varepsilon^a\right\},\]
then show that for any $x\in V'\subseteq \widetilde{V}'$, there holds 
\begin{equation*}
\operatorname{dist}(x, \partial \varOmega)\in\left[2^{-1}\big(1+(a\eta' \varepsilon^{a-1})^2\big)^{-\frac{1}{2}}\cdot x_n,x_n\right].
\end{equation*}
Hence, for any $\beta\geq n+1+\gamma$ with $\gamma\in \mathbb{R}$, we have 
	\begin{equation*}
		\operatorname{dist}(x, \partial \varOmega)^{n+1-\beta}\geq C_8(a,\eta',\varepsilon,\beta,n)\cdot x_n{}^{n+1-\beta}.
	\end{equation*}

	Together with \eqref{eq-widetildef},  these estimates lead us to
	\begin{align}\label{eq60}
		{[\widetilde{f}(x, W, DW)]^{-1} } & = A^{-1} \operatorname{dist}(x, \partial \varOmega)^{n+1-\beta}|W|^\alpha\left(1+|DW|^2\right)^{-\frac{\gamma}{2}} \nonumber\\
		& \geq  A^{-1}   C_7(a, b, \gamma, \delta, \operatorname{\operatorname{diam}}(\varOmega) 
		) C_8(a,\eta',\varepsilon,\beta,n) \cdot|W|^{\alpha+(\frac{a b}{2}-1)\gamma}\cdot x_n{}^{n+1-\beta}\cdot \xi^{\gamma}.
	\end{align}
	By substituting (\ref{eq52}) and (\ref{eq60}) into \eqref{eq37}, we further deduce
	\begin{align*}
		\widetilde{H}[W]= & F\left(\lambda_1(D^2 W), \cdots, \lambda_n(D^2 W)\right) \cdot[\widetilde{f}(x, W, DW)]^{-1} \cdot \\
		\geq   & C_9(a, b, \eta',\varepsilon,A,B,\beta,\gamma,s,t, \delta,     \operatorname{\operatorname{diam}}(\varOmega), n) 
		\cdot|W|^{
		\alpha+(\frac{ab}{2}-1)\gamma+(1-b) s+(1-a b) t}\cdot x_n{}^{n+1-\beta}\cdot \xi^{\gamma-2t}.
	\end{align*}
By the definition \eqref{eq:W} of $W$, we have 
\begin{equation}\label{eq:estimate-W}
	|W|\in\left[(1-\delta)^{\frac{1}{b}}\cdot x_n{}^{\frac{2}{ab}}\cdot\xi^{-\frac{2}{ab}},x_n{}^{\frac{2}{ab}}\cdot\xi^{-\frac{2}{ab}}\right].
\end{equation}
It follows that 
\begin{align*}
	\widetilde{H}[W]&	\geq    C_{10}(a, b,\eta',\varepsilon, A,B,\alpha,\beta,\gamma,s,t, \delta,      \operatorname{\operatorname{diam}}(\varOmega), n) 	\cdot x_n{}^{n+1-\beta+\gamma+\frac{2}{ab}\left(\alpha-\gamma+(1-b) s+(1-ab) t\right)}\\
	&\ \ \ \ \cdot \xi^{-2t-\frac{2}{ab}\left(\alpha-\gamma+(1-b) s+(1-ab) t\right)}.\stepcounter{equation}\tag{\theequation}\label{eq:C10}
	\end{align*}

	We first let 
	\begin{equation}\label{conditionb-1-new}
		n+1-\beta+\gamma+\frac{2}{ab}\left(\alpha-\gamma+(1-b) s+(1-ab) t\right)\leq 0.
	\end{equation}
	Since $x_n\leq \operatorname{
		diam}(\varOmega)$, we have
	\[x_n{}^{n+1-\beta+\gamma+\frac{2}{ab}\left(\alpha-\gamma+(1-b) s+(1-ab) t\right)}\geq C_{11}(a,b,\alpha,\beta,\gamma,s,t,\operatorname{diam}(\varOmega),n).\]
	This further implies  
	\begin{equation*}
		\widetilde{H}[W]
		\geq C_{12}(a, b,\eta',\varepsilon, A,B,\alpha,\beta,\gamma,s,t, \delta,      \operatorname{\operatorname{diam}}(\varOmega), n)  \cdot \xi^{ -2t-\frac{2}{ab}\left(\alpha- \gamma+(1-b) s+(1-ab) t\right)}.
	\end{equation*}
 
	Now we set 
	\begin{equation}\label{conditionb-2-new}
		-2t-\frac{2}{ab}\left(\alpha- \gamma+(1-b) s+(1-ab) t\right)\leq 0.
	\end{equation}
	Then, by virtue of \eqref{assumption-7}, we can take
	\begin{equation}\label{eq:take-xi-1}
		0<\xi=\xi(a, \eta,\eta', A, B,\alpha, \beta,  \gamma,  s, t, \operatorname{\operatorname{diam}}(\varOmega), n)<1
	\end{equation}
	small enough such that
	\begin{equation}\label{eq61}
		\widetilde{H}[W] \geq 1 \text { in } V'.
	\end{equation}
	where we have imposed two condtions \eqref{conditionb-1-new} and \eqref{conditionb-2-new}, i.e.  $b$ satisfies
	\begin{equation} \label{eq:take-b-1}
	\frac{2(\alpha-\gamma+s+t)}{a(\beta-\gamma-n-1+2t)+2s}\leq b\leq \frac{\alpha-\gamma+s+t}{s}.
	\end{equation}
For convenience, we can take 
\begin{equation*}
b=b_0:=\frac{2(\alpha-\gamma+s+t)}{a(\beta-\gamma-n-1+2t)+2s}.
\end{equation*}
	Here, we remark  that $2 \leq a<\frac{2(\alpha-\gamma+t)}{\beta-\gamma+2t-n-1}$ gives $b_0>1$ for \textit{Step 1-1} while  $\frac{2(\alpha-\gamma+t)}{\beta-\gamma+2t-n-1} \leq a<+\infty$ gives $0< b_0\leq 1$ for \textit{Step 1-2}.

	Up to now, 
	by (\ref{eq31}), \eqref{eq:HW}, \eqref{eq:tildeHW} and (\ref{eq61}), we conclude that $W_{\operatorname{sub}}$ is a subsolution to the problem (\ref{eq01})-\eqref{eq01boundary} over $V'$, where $b$ and $\xi$ satisfy \eqref{eq:take-b-1} and \eqref{eq:take-xi-1} respectively.

	\subsection{Final  estimate on upper bound}

	Based on  the construction of subsolution $W_{\operatorname{sub}}$ in the last subsection together with the assumption \eqref{eq:assumption}, we are able to use the comparison principle to derive that  
	\begin{equation}\label{eq:comparison}
		0 \geq u\geq W\ \ \text{ in } V'.
	\end{equation}
	Here we notice that we can take  $\eta'\varepsilon^{a}=\operatorname{diam}(\varOmega_{1/2,P})$ such that $\left(\frac{1}{4}\varOmega_{1/2,P}\right) \cap x_n\text{-axis} = V'\cap x_n\text{-axis} $.

	We first restrict \eqref{eq:comparison} onto the $x_{n}$-axis.  Taking $y=(0,y_n)$ now gives $y_n=d_y=\operatorname{dist}(y,P)$. 
	By \eqref{assumption-3}, \eqref{assumption-5} and \eqref{eq:W}, it follows that $u(P)=u(0)=\varphi(0)=0$ and $W(P)=W(0)=0$. Thus, by \eqref{eq:W} again, we have
	\begin{align*}
		|u(y)-u(P)| & \leq|W(y)-W(P)|=|W(y)|\\ &=\left(\frac{y_n}{\xi(a, \eta, A,B, \alpha, \beta, \gamma, s, t, \operatorname{\operatorname{diam}}(\varOmega), n)}\right)^{\frac{2}{a b}} \\
		&= C(a, \eta, A,B, \alpha, \beta, \gamma, s, t, \operatorname{\operatorname{diam}}(\varOmega), n) 
		\left(\operatorname{dist}(y,P)\right)^{\mu(a)}.
	\end{align*}
	Here and in what follows, we have used   the fact that 
	\[ \mu(a)=\frac{\beta-\gamma+2t-n-1}{\alpha-\gamma+s+t}+\frac{2 s}{a(\alpha-\gamma+s+t)}=\frac{2}{ab}\]
	due to \eqref{assumption-1'}  and \eqref{eq:take-b-1}.
	
	Next we consider an arbitrary point  $y\in V'$ that is not on the $x_{n}$-axis.  Let $\theta(y)$ denote  the angle between $l_{yP}$ and the $x_{n}$-axis. Noting 
	$y_{n} = d_{y}\cos\left(\theta(y)\right)= \operatorname{dist}(y,P)\cos\left(\theta(y)\right)$, we deduce from \eqref{eq:comparison} and \eqref{eq:W} that
	\begin{align*}
		|u(y)-u(P)| & \leq|W(y)-W(P)|=|W(y)|\\ 
		&\leq \left(\frac{y_{n}}{\xi(a, \eta, A,B, \alpha, \beta, \gamma, s, t, \operatorname{\operatorname{diam}}(\varOmega), n)}\right)^{\frac{2}{a b}}\\
		& =C(a, \eta, A,B, \alpha, \beta, \gamma, s, t, \operatorname{\operatorname{diam}}(\varOmega), n) \big(\operatorname{dist}(y,P)\cos\left(\theta(y)\right)\big){}^{\mu(a)} \\
		& \leq C(a, \eta, A,B, \alpha, \beta, \gamma, s, t, \operatorname{\operatorname{diam}}(\varOmega), n) \left(\operatorname{dist}(y,P)\right)^{\mu(a)}.
	\end{align*}

		We are now in a position to consider $y\in \varOmega_{1/2,P}\setminus V'$. 
Since there exists at least one  point in $V'\cap l_{yP}$, we can fix $y'\in V'\cap l_{yP}$. It follows that $\operatorname{dist}(y',P)\leq \operatorname{dist}(y,P)$ and then
	\begin{align*}
		|u(y)-u(P)| 
			&\leq C(a, \eta, A,B, \alpha, \beta, \gamma, s, t, \operatorname{\operatorname{diam}}(\varOmega), n) \left(\operatorname{dist}(y',P)\right)^{\mu(a)}\\
		&\leq C(a, \eta, A,B, \alpha, \beta, \gamma, s, t, \operatorname{\operatorname{diam}}(\varOmega), n) \left(\operatorname{dist}(y,P)\right)^{\mu(a)}.
	\end{align*}

	Therefore, for any $y\in\varOmega_{1/2,P}$, we always have 
	\begin{equation}\label{eq:Holder}
		|u(y)-u(P)|
		\leq  C(a, \eta, A,B, \alpha, \beta, \gamma, s, t, \operatorname{\operatorname{diam}}(\varOmega), n) \left(\operatorname{dist}(y,P)\right)^{\mu(a)}.
	\end{equation}
	This inequality holds not only in the local region $\varOmega_{1/2,P}$ but actually throughout the entire domain  $\varOmega$.  
	Indeed, for any  $y\in\varOmega\setminus\varOmega_{1/2,P}$, we can always find $P'\in \partial\varOmega$ such that $y\in\varOmega_{1/2,P'}$, and hence we can repeat the above procedure to obtain \eqref{eq:Holder}  when replacing $P$ by $P'$ and $y\in\varOmega_{1/2,P}$ by $y\in\varOmega_{1/2,P'}$. Together with the arbitrariness of $P\in\partial\varOmega$, it implies that
	\begin{equation}\label{eq:Holder-final}
		|u(y)-u(x)|
		\leq  C(a, \eta, A,B, \alpha, \beta, \gamma, s, t, \operatorname{\operatorname{diam}}(\varOmega), n) \left(\operatorname{dist}(y,x)\right)^{\mu(a)},\ \ \forall x\in\partial\varOmega,\ \forall y\in \varOmega_{1/2,x}.
	\end{equation}
	Since $\varOmega$ is of exterior-$(a,\eta)$-type, we get $a_{\max}=a$ and also obtain 
	\begin{equation}\label{conclusion1}
		|u(y)-u(x)|
		\leq  C(a, \eta, A,B, \alpha, \beta, \gamma, s, t, \operatorname{\operatorname{diam}}(\varOmega), n) \left(\operatorname{dist}(y,x)\right)^{\mu(a_{\max})},\ \ \forall x\in\partial\varOmega,\ \forall y\in \varOmega_{1/2,x}.
	\end{equation}
	This completes the proof of  case $a\in[2,+\infty)$ for Theorem \ref{thm1-1} and the upper bound part in  Theorem \ref{thm1}.

	Now we consider the case that both $x,y\in\overline{\varOmega}$. Then there exist $x_1,x_2\in\partial \varOmega$ such that $l_{xy}\cap \partial \varOmega = \{x_1,x_2\}$.  Denote $  \partial \varOmega_{1/2,x_1}  \cap \partial \varOmega_{1/2,x_2} = \{y_0\}$. In fact, we also have  $l_{xy} \cap \partial \varOmega_{1/2,x_1} = l_{xy} \cap \partial \varOmega_{1/2,x_2} = \{y_0\}$, and then 
	$$u(y_0) = \min_{ l_{xy}}u.$$
	Therefore we get 
	\begin{align}
		|u(y) - u(x)| & \leq \max\{|u(y_0) - u(y)|,|u(y_0) - u(x)|\} \\ \nonumber
		&\leq  C(a, \eta, A,B, \alpha, \beta, \gamma, s, t, \operatorname{\operatorname{diam}}(\varOmega), n) \max\{\left(\operatorname{dist}(y_0,y)\right)^{\mu(a_{\max})},\left(\operatorname{dist}(y_0,x)\right)^{\mu(a_{\max})}\} \\ \nonumber
		&\leq  C(a, \eta, A,B, \alpha, \beta, \gamma, s, t, \operatorname{\operatorname{diam}}(\varOmega), n) \left(\operatorname{dist}(y,x)\right)^{\mu(a_{\max})} \nonumber ,\ \ \forall x,y\in \overline{\varOmega},
	\end{align}
	and thus		$$ u \in C^{\mu(a_{\max})}(\overline{\varOmega}).$$

	Alternatively, \eqref{conclusion1} also implies 
	\[\text{$\varOmega$ is an upper-$(\mu(a_{\max}),C(a, \eta, A,B, \alpha, \beta, \gamma, s, t, \operatorname{\operatorname{diam}}(\varOmega), n))$ type domain about  $u$,}\]
	then by Lemma \ref{lemma-new},  we obtain 
	$$ u \in C^{\mu(a_{\max})}(\overline{\varOmega}),$$
	and
	\begin{equation*}
		\Vert u\Vert _{C^{\mu(a_{\max})}(\overline{\varOmega})} \leq C(a_{\max}, \eta, A,B, \alpha, \beta, \gamma, s, t, \operatorname{\operatorname{diam}}(\varOmega), n)\left(\left(\operatorname{diam}(\varOmega)\right)^{\mu(a_{\max})}+1\right)+\mathop{\max}_{\partial\varOmega} |\varphi|.
	\end{equation*}
	The proof of Theorem \ref{thm1-1'}   is now complete.

	\section{Boundary lower bound estimate}\label{sec:super}
	
	\subsection{Reduction of the problem}
	
For the case $a\in[2,+\infty)$, it now remains to prove the left hand side of the inequality (\ref{eq09}). 
	
	To establish this, we need boundedness conditions $(F_2')$ and $(f_3')$, which are similar to $(F_2)$ and $(f_3)$ in the last section but with opposing symbols.
	At this point, we can restate the main result of lower bound estimate as the following theorem, which particularly aims  to determine the  order of lower bound.

	\begin{theorem}\label{thm3}
		Suppose that  $ \varOmega \subseteq \mathbb{R} ^{n} $ 
		is a bounded convex domain and $P \in \partial \varOmega$ is an interior-$(a, \eta, \varepsilon)$-type point with $a \in[2,+\infty)$. Without loss of generality, we assume that
		$$
		P=0 \quad \text { and } \quad\left\{x \in \mathbb{R}^n:\left|x^{\prime}\right|<\mu, \frac{1}{2} \eta\left|x^{\prime}\right|^a<x_n<\frac{1}{2} \eta\varepsilon^a\right\} \subseteq \varOmega \subseteq \mathbb{R}_{+}^n.
		$$
		We also assume that 
		$F$ satisfies $(F_1)$  and $(F_2')$,  $f$ satisfies $(f_1)$, $(f_2)$ and $(f_3')$, $\varphi \in C(\partial\varOmega)$, and $\mu$ satisfies \eqref{eq10}. 
		If $u$ is a convex viscosity solution to the problem \eqref{eq01}-\eqref{eq01boundary}, 
		then for any point $Q \in \varOmega_{1/2,P}$, there exists some positive constant  $C$ (depending on $a, b,\eta,\varepsilon,\xi, A,B,\alpha,\beta,\gamma,s,t, n$) 
		such that 
		$$
		\left|u\left(x\right) - u(P)\right| \geq C(a, b,\eta,\varepsilon,\xi, A,B,\alpha,\beta,\gamma,s,t, n) \left(\operatorname{dist}(x,P)\right)^{\frac{2}{a b}},$$
		where 
		\[b\leq\frac{2(\alpha-\gamma+s+t)}{a(\beta-\gamma-n-1+2t)+2s}.\]
	\end{theorem}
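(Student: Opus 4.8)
The plan is to establish the lower bound by constructing a supersolution $W_{\operatorname{sup}} = W + \varphi_*$ of the form \eqref{eq:W} with $\xi$ taken \emph{large}, in contrast to the subsolution construction of Section \ref{sec:sub} where $\xi$ was taken small. The key point is a dual inequality: using $(f_3')$ and the fact that $\varphi_*$ is concave (so that $|DW_{\operatorname{sup}} - D\varphi_*| = |DW|$ and $|W_{\operatorname{sup}} - \varphi_*| = |W|$, while the eigenvalues of $D^2 W_{\operatorname{sup}} = D^2 W + D^2\varphi_*$ are bounded \emph{above} by those of $D^2 W$ since $D^2\varphi_*\le 0$), one gets
\[
H[W_{\operatorname{sup}}] = F(\lambda_1(D^2 W_{\operatorname{sup}}),\cdots,\lambda_n(D^2 W_{\operatorname{sup}}))\cdot[f(x,W_{\operatorname{sup}},DW_{\operatorname{sup}})]^{-1} \le \widetilde{H}[W],
\]
so it suffices to force $\widetilde{H}[W] \le 1$. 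Note that only the internal domain $V$ of Lemma \ref{lemma:local-dx} is needed here, not the global inclusion \eqref{assumption-7} (cf.\ Remark \ref{remark:V}).

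First I would run the eigenvalue and gradient estimates for $W$ exactly as in Section \ref{sec:sub}, but now keeping the \emph{reverse} inequalities: using Lemma \ref{lemma:eigen} and the explicit formulas for $W_r, W_{rr}, W_{nn}, W_{rn}$, one bounds $\lambda_{\min}(D^2 W)$ from above by $W_r/r \sim |W|^{1-b}$ and $\lambda_{\max}(D^2 W)$ from above by $\operatorname{tr} D^2 W \sim |W|^{1-ab}\xi^{-2}$, hence by $(F_2')$,
\[
F(\lambda_1(D^2 W),\cdots,\lambda_n(D^2 W)) \le B\,(\lambda_{\min})^s(\lambda_{\max})^t \le C\,|W|^{(1-b)s+(1-ab)t}\,\xi^{-2t}.
\]
Similarly $1+|DW|^2 \sim |W_n|^2 \sim |W|^{2-ab}\xi^{-2}$ gives $(1+|DW|^2)^{-\gamma/2} \le C\,|W|^{(ab/2-1)\gamma}\xi^{\gamma}$; and on $V$, Lemma \ref{lemma:local-dx} gives $\operatorname{dist}(x,\partial\varOmega) \in [2^{-1}(1+(a\eta\varepsilon^{a-1})^2)^{-1/2}x_n, x_n]$, so $\operatorname{dist}(x,\partial\varOmega)^{n+1-\beta} \le C\,x_n^{n+1-\beta}$ when $\beta \ge n+1$, using $n+1-\beta\le 0$ together with $x_n$ bounded away from $0$ appropriately — here one keeps the upper estimate direction. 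Multiplying through and inserting $|W| \in [(1-\delta)^{1/b}x_n^{2/(ab)}\xi^{-2/(ab)}, x_n^{2/(ab)}\xi^{-2/(ab)}]$ yields
\[
\widetilde{H}[W] \le C\,x_n^{\,n+1-\beta+\gamma+\frac{2}{ab}(\alpha-\gamma+(1-b)s+(1-ab)t)}\,\xi^{\,-2t-\frac{2}{ab}(\alpha-\gamma+(1-b)s+(1-ab)t)}.
\]
Imposing the sign condition $n+1-\beta+\gamma+\tfrac{2}{ab}(\alpha-\gamma+(1-b)s+(1-ab)t) \ge 0$ — which is exactly $b \le \frac{2(\alpha-\gamma+s+t)}{a(\beta-\gamma-n-1+2t)+2s}$, the hypothesis of the theorem — lets us bound $x_n$ from above by $\operatorname{diam}(V)$ to absorb that power, after which the remaining $\xi$-power (with the dual sign condition on the exponent of $\xi$) can be made small by choosing $\xi$ \emph{large}, giving $\widetilde{H}[W] \le 1$ in $V$.

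Next I would check the boundary inequality $W_{\operatorname{sup}} \ge u$ on $\partial V$. On the lateral/slanted part $S$ of $\partial V$ (lying on $x_n = \eta|x'|^a$) the barrier $W$ vanishes to the correct order, and on the top part $L$ (where $x_n$ is bounded below) taking $\xi$ large makes $|W|$ small, so $W \ge u - (\text{small})$; combined with concavity of $\varphi_*$ (giving $\varphi_* \ge 0$ hence $W_{\operatorname{sup}} \ge W$) and $u \le 0$ on $\varOmega_{1/2,P}$ via \eqref{eq:assumption}, one gets $W_{\operatorname{sup}} \ge u$ on $\partial V$. This is the step where the balance between $\eta, \varepsilon$ and $\xi$ matters most, and Remark \ref{remark:etaepsilon}'s normalization $\eta\varepsilon^a \equiv C_{\eta,\varepsilon}$ is used to keep the geometry uniform. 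The comparison principle then gives $u \ge W_{\operatorname{sup}} \ge W$ on $V$, i.e.\ $|u(x) - u(P)| = |u(x)| \ge |W(x)|$; restricting to the $x_n$-axis and then, as in Section \ref{sec:sub}, to an arbitrary $x \in \varOmega_{1/2,P}$ via the angle $\theta(x)$ and the monotonicity Remark \ref{rmk}, yields $|u(x) - u(P)| \ge C\,(\operatorname{dist}(x,P))^{2/(ab)}$.

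The main obstacle I anticipate is \emph{not} the PDE inequality — that is a mirror image of Section \ref{sec:sub} — but rather the passage from the lower bound on the internal cone $l_{Px_0}$ (or the $x_n$-axis, where $W$ directly controls $u$ from below) to \emph{all} of $\varOmega_{1/2,P}$. Unlike the upper bound, the lower bound does not propagate freely under convexity (cf.\ the remark after Definition \ref{def2} that the revision $y\in\varOmega_{1/2,x}\to y\in\varOmega$ fails for lower-$(\nu,m)$-type); so one must carefully use the structure of $\varOmega_{1/2,P}$ — that every ray from $P$ into $\varOmega_{1/2,P}$ passes through a point of $V$ where $W$ is active, and that $u$ is monotone decreasing from $P$ to the minimizing point $y_Q$ along $l_{PQ}$ — to transfer the estimate. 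Making the dependence of the constant genuinely only on $a,b,\eta,\varepsilon,\xi,A,B,\alpha,\beta,\gamma,s,t,n$ (and not on $\operatorname{diam}(\varOmega)$, since we work inside $V$) requires tracking that all absorbed powers of $x_n$ are over the bounded set $V$ whose size is controlled by $\eta,\varepsilon$ alone.
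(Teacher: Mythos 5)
Your proposal follows the paper's argument essentially step for step: $W_{\operatorname{sup}}=W+\varphi_*$ as supersolution with $\xi$ taken large, reduction to $\widetilde{H}[W]\le 1$ in $V$ via $(f_3')$ and concavity of $\varphi_*$ (so $D^2\varphi_*\le 0$ and ellipticity $(F_1)$ let you drop $\varphi_*$), the reversed eigenvalue and gradient bounds from Lemma \ref{lemma:eigen} and Lemma \ref{lemma:local-dx}, the two sign conditions on the exponents yielding the stated constraint on $b$, the comparison principle, and finally the passage from the $x_n$-axis to all of $\varOmega_{1/2,P}$ via a subcone $V_0$, the interior sphere condition, the angle $\theta(\cdot)$, and the monotonicity of $u$ along rays from $P$. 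Your closing observation about why the lower bound does not propagate automatically under convexity is exactly the subtlety the paper's $V_0$/ray argument is built to handle.

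One genuine slip to fix in the boundary-comparison step: you invoke ``concavity of $\varphi_*$ giving $\varphi_*\ge 0$.'' That is false in general --- the normalization \eqref{assumption-3} only gives $\varphi(P)=0$, and a concave function vanishing at one boundary point need not be nonnegative. What concavity of $\varphi_*$ and convexity of $u$ actually give, and what the paper uses, is $\varphi_*\ge u$ \emph{throughout} $\varOmega$ (each affine $\ell\ge\varphi$ on $\partial\varOmega$ dominates $u$ in $\varOmega$ by the maximum principle for convex functions, and $\varphi_*$ is the infimum of these). With this corrected inequality, the comparison on $S$ reads $W_{\operatorname{sup}}=W+\varphi_*\ge \varphi_*\ge u$ once $W=0$ there (this forces a compatible choice of $\xi$ relative to $\eta$), and on $L$ one does not get by with ``$|W|$ small'': the paper isolates a quantitative gap $\max_L u\le -m-|\varphi_*|_{C^{\mu}}+\min_L\varphi_*<0$ (using $u<0$ on $L$, excluding $u\equiv 0$) and chooses $\xi\ge\xi_0$ as in \eqref{eq4.8} so that $\min_L W\ge -m-|\varphi_*|_{C^{\mu}}$, whence $W_{\operatorname{sup}}\ge \min_L W+\min_L\varphi_*\ge\max_L u$. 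With these two repairs your argument matches the paper's proof.
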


	\begin{remark}
		For a special case of Theorem \ref{thm3}, we refer the readers to Theorem 1.1 in  \cite{Li-Li-F}  which addressed the issue of zero boundary value problems.
	\end{remark}

	We recall that the work \cite{Li-Li-F} discussed  the zero boundary value problem in the subdomain $V$ since the  problem only have local properties. 
	To study the issue in nonzero boundary value situation, we primarily consider the domain $V \cap \varOmega_{1/2,P}$ in this subsection. 
	However, it is rather difficult to directly construct a supersolution to our problem  \eqref{eq01}-\eqref{eq01boundary} over $V \cap \varOmega_{1/2,P}$ (since the boundary of this domain is unknown). Nevertheless, this issue will be solved if we have a supersolution to \eqref{eq01}-\eqref{eq01boundary}  over $V$. Our goal has now become to first search for two suitable positive constants $b$ and $\xi$ in \eqref{eq:W} to make $W$ a supersolution   over $V$, and then construct the final upper estimate over $\varOmega_{1/2,P}$.
	
	\begin{remark}\label{remark:V}
		In the following proof, we indeed have   
		$V\cap\varOmega_{1/2,P}=V$ since we can take $\eta$   sufficiently large and  $\varepsilon$
		as stated in  Remark \ref{remark:etaepsilon}. 
	\end{remark}
	
	\subsection{Construction of supersolution}
Consider  
\begin{equation*}
	W_{\operatorname{sup}}:=W+\varphi_*.
\end{equation*}
Since $\varphi_*=\varphi=u$ on $\partial\varOmega$ and $\varphi_*$ is concave, we have $\varphi_*\geq u$ over $\varOmega$. 
It is  clear that 
\begin{equation*}
	W=0\ \ \text{ on }S,
\end{equation*}
and then 
\begin{equation*}
	W_{\operatorname{sup}}=\varphi_*\geq u\ \ \text{ on }S.
\end{equation*}

Since the solution $u$ is a convex function and satisfies $u(P)=0$, it follows that $u\leq 0$ in $\varOmega_{1/2,P}$ and moreover $L\cap \varOmega_{1/2,P}\subseteq\varOmega_{1/2,P}$ gives $u\leq 0$ on $L$. To exclude the case $u\equiv0$ in $\varOmega$, we further obtain  $u<0$ on $L$, which implies that there exists some constant $m$ such that 
\[\max _{L} u \leq-m-|\varphi_*|_{C^{\mu(a)}(\overline{\varOmega})}+\min_{L}\varphi_*<0.\]  
By \eqref{eq:W} (the definition of $W$), we note that
\begin{equation*}
	\min _{L} W=W\big(0, \frac{1}{4} \eta\varepsilon^a\big)=-\big(\frac{1}{4}\eta\varepsilon^a\big)^{\frac{2}{a b}} \cdot \xi^{-\frac{2}{ab}}.
\end{equation*} 
In order to let $W_{\operatorname{sup}}\geq u$ on $L$, we just need 
\begin{equation}\label{eq4.8}
	\xi \geq \xi_0:=\frac{1}{4}\eta\varepsilon^a \left(m+|\varphi_*|_{C^{\mu(a)}(\overline{\varOmega})}\right)^{-\frac{ab}{2}},
\end{equation}
which yields
\begin{equation*}
	W_{\operatorname{sup}}\geq \min_L W+\min_{L}\varphi_*\geq -m-|\varphi_*|_{C^{\mu(a)}(\overline{\varOmega})}+\min_{L}\varphi_*\geq \max_L u\geq u\  
	\text{ on }L.
\end{equation*}

Since $\partial V=L\cup S$,
combining these estimates shows that 
\begin{equation}\label{eq:Hboundary}
	W_{\operatorname{sup}} \geq u \ \ \text{ on }\partial V.
\end{equation}

The rest of this subsection is devoted to constructing 
	\begin{equation*}
	W_{\operatorname{sup}}:=W+\varphi_*
\end{equation*}
as a supersolution to  \eqref{eq01} in $V$,   
 that is, to satisfy
\begin{equation}\label{eq:HW'}
	H[W_{\operatorname{sup}}] \leq 1\ \ \text{ in }V.
\end{equation}
According to the condition $(f_3')$ and the fact that $\varphi_*$ is concave, we note that 
\begin{align*}
	H[W_{\operatorname{sup}}]&=F(\lambda_1(D^2W_{\operatorname{sup}}),\cdots,\lambda_n(D^2W_{\operatorname{sup}}))\cdot [f(x,W_{\operatorname{sup}},DW_{\operatorname{sup}})]^{-1}\\
	&=F(\lambda_1(D^2W+D^2\varphi_*),\cdots,\lambda_n(D^2W+D^2\varphi_*))\cdot [f(x,W+\varphi_*,DW+D\varphi_*)]^{-1}\\
	&\leq F(\lambda_1(D^2W),\cdots,\lambda_n(D^2W))\cdot \left[A^{-1} \operatorname{dist}(x, \partial \varOmega)^{n+1-\beta}|W|^\alpha\left(1+|DW|^2\right)^{-\frac{\gamma}{2}}\right]^{-1} \\
	&=F(\lambda_1(D^2W),\cdots,\lambda_n(D^2W))\cdot[\widetilde{f}(x,W,DW)]^{-1}=\widetilde{H}[W].
\end{align*}
Now it suffices to satisfy
\begin{equation}\label{eq:tildeHW'}
	\widetilde{H}[W] \leq 1\ \ \text{ in }V.
\end{equation}
For convenience, the relevant constants appearing in what follows are collected in Table \ref{table:constant-2}.

\begin{table}[ht] \caption{Collection of constants in upper bound estimate for the case $a\in [2,+\infty)$}
	\label{table:constant-2}
	\renewcommand{\arraystretch}{1.8}\centering
	\begin{tabular}{|c|c|c|c|c|} 
		\hline
		Symbol & \multicolumn{2}{|c|}{Values}
		& Sign
		\\ [0.5ex]
		\hline
		$C_{13}$& \multicolumn{2}{|c|}{$\frac{2}{b}$}  &$C_{13}>0$ \\\hline 
		$C_{14}$& \multicolumn{2}{|c|}{$2^{\frac{6}{a}-4}\left( \frac{(n-1)}{b}+\frac{2|b-1|}{b^2}\right)\eta^2\varepsilon^{2a-2} +\left(\frac{2|a-2|}{a^2 b}+\frac{4|b-1|}{a^2 b^2}\right)  $}  &$C_{14}>0$ \\\hline
		$C_{15}$& \multicolumn{2}{|c|}{$\max \Big\{2^{\beta-n-1}\big(1+(a\eta \varepsilon^{a-1})^2\big)^{\frac{\beta-n-1}{2}}, 1\Big\}$ }  &$C_{15}>0$  \\\hline
		$C_{16}$& \multicolumn{2}{|c|}{$4\big(\rho^{-1}+1\big)^2\big(1+(2a\eta \varepsilon^{a-1})^2\big) \max\Big\{\Big(\big(1-\big(\frac{1}{4}\big)^{\frac{2}{a}}\big)\Big)^{2-a},1\Big\}$}  &$C_{16}>0$\\\hline
		$C_{17}$& \multicolumn{2}{|c|}{$\max\left\{2^{-\gamma}a^\gamma b^\gamma,C_{16}(a,b,\eta,\varepsilon)^{-\frac{\gamma}{2}}\right\}$ }  &$C_{17}>0$  \\\hline
		$C_{18}$& \multicolumn{2}{|c|}{$A^{-1}BC_{13}(b)^{s}C_{14}(a,b,\eta,\varepsilon,n)^{t}C_{15}(a,\eta,\varepsilon,\beta,n)C_{17}(a,b,\eta,\varepsilon)$  } &$C_{18}>0$  \\\hline
		$C_{19}$& \multicolumn{2}{|c|}{$\max\Big\{\big(1-\big(\frac{1}{4}\big)^{\frac{2}{a}}\big)^{\frac{1}{b}\left(\alpha+(b-1) \gamma+(1-b) s+(1-ab) t\right)},1\Big\}$ }  &$C_{19}>0$  \\\hline
		$C_{20}$& \multicolumn{2}{|c|}{$C_{18}(a, b,\eta,\varepsilon, A,B,\beta,\gamma,s,t, n)C_{19}(a,b,\alpha,\gamma,s,t) $}   &$C_{20}>0$  \\\hline
		$C_{21}$& \multicolumn{2}{|c|}{$(\frac{1}{4}\eta\varepsilon^a)^{n+1-\beta+\gamma+\frac{2}{ab}\left(\alpha-\gamma+(1-b) s+(1-ab) t\right)}$}   &$C_{21}>0$  \\\hline
		$C_{22}$& \multicolumn{2}{|c|}{$C_{20}(a, b,\eta,\varepsilon, A,B,\alpha,\beta,\gamma,s,t, n)  C_{21}(a,b,\eta,\varepsilon,\alpha,\beta,\gamma,s,t,n)$}   &$C_{22}>0$  \\\hline
	\end{tabular}
\end{table}

We notice that both $\eta$ and $\xi$ can be taken as sufficiently large numbers (see Remark \ref{remark:etaepsilon} for $\eta$). Let us just take  $\xi=\lambda\eta$, where $\lambda>0$. 
For example, we let  $\lambda=\big(\frac{1}{2}\big)^{\frac{a}{2}-1}$, i.e.
\begin{equation}\label{eq:take-xi-2'}
	\xi=\big(\frac{1}{2}\big)^{\frac{a}{2}-1}\eta
\end{equation}
for the convenience of the following analysis.

By virtue of \eqref{eq:eigen-minmax}, \eqref{eq:take-xi-2'} and the fact $x_n\leq \frac{1}{4}\eta\varepsilon^a$, we can deduce that 
\begin{align*}
	\lambda_{\min }(D^2 W)  & \leq\frac{W_r}{r} 
	=C_{13}(b)\cdot|W|^{1-b},\\   
	\lambda_{\max}(D^2 W)  & \leq(n-2)\frac{W_r}{r}+W_{rr}+W_{nn} 
	\leq C_{14}(a,b,\eta,\varepsilon,n)\cdot|W|^{1-ab}\cdot  \xi^{-2}.
\end{align*}
According to $(F_2')$, it then follows that 
\begin{align*}
	F\left(\lambda_1\left(D^2 W\right), \cdots, \lambda_n\left(D^2 W\right)\right) &\leq B(\lambda_{\min}(D^{2}u))^{s}(\lambda_{\max}(D^{2}u))^{t} \\
	&= BC_{13}(b)^{s}C_{14}(a,b,\eta,\varepsilon,n)^{t} \cdot|W|^{(1-b) s+(1-ab) t}\cdot  \xi^{-2t}.\stepcounter{equation}\tag{\theequation}\label{eq:F}
\end{align*}

Now we refer to the geometric handling in \cite{Li-Li-F} to bound $\operatorname{dist}(x, \partial \varOmega)$ and $1+|D W|^2$ for any $x\in V$: firstly, we have Lemma \ref{lemma:local-dx} and hence  for any  $\beta \in \mathbb{R}$, there holds
\begin{equation}\label{eq:dx}
	\operatorname{dist}(x, \partial \varOmega)^{n+1-\beta} \leq  
	C_{15}(a,\eta,\varepsilon,\beta,n)\cdot x_n{ }^{n+1-\beta};
\end{equation}
secondly, we have 
\begin{equation}\label{eq:DW}
	1+|DW|^2\in \left[\frac{4}{a^2b^2}\cdot|W|^{2-2b}\cdot x_n{}^{\frac{4}{a}-2}\cdot\xi^{-\frac{4}{a}},4\big(\rho^{-1}+1\big)^2\big(1+(2a\eta \varepsilon^{a-1})^2\big)\cdot|W|^{2-ab}\cdot\xi^{-2}\right],
\end{equation}
where the constant  $\rho=\rho(a, \eta, \varepsilon)>0$ is taken so that  
\begin{equation*}
	|W|\geq \rho\operatorname{dist}(x,S),\ \ \forall  x\in V.
\end{equation*}
Here, we have omitted the details of these bounds.

Before deriving the bounds on $(1+|DW|^2)^{-\gamma}$ for any $\gamma\in \mathbb{R}$, we first unify the form of the bounds on $1+|DW|^2$ based on  \eqref{eq:DW}. This can be finished by showing the following claim: 
\begin{equation}\label{eq:W-bound}
	|W|\in\left[\big(1-\big(\frac{1}{4}\big)^{\frac{2}{a}}\big)^{\frac{1}{b}}\cdot\big(\frac{x_n}{\xi}\big)^{\frac{2}{ab}},\big(\frac{x_n}{\xi}\big)^{\frac{2}{ab}}\right],\ \ \forall  x\in V.
\end{equation}
In fact, the upper bound is trivial to see from \eqref{eq:W} (the definition of $W$),   
and hence we only need to derive the lower bound. 
This is clear when $x_n=0$. It remains to consider  $x_n\in (0,C_{\eta,\varepsilon}]$. Let $x_n=\frac{kC_{\eta,\varepsilon}}{4}$, where   $k\in (0,4]$. 
By \eqref{eq:V} (the definition of $V$) and \eqref{eq4.10}, we get
\[r^2\leq \big(\frac{1}{4}\big)^{\frac{2}{a}}\varepsilon^2=\big(\frac{1}{4}\big)^{\frac{2}{a}}\big(\frac{C_{\eta,\varepsilon}}{\eta}\big)^{\frac{2}{a}}=\big(\frac{C_{\eta,\varepsilon}}{4\eta}\big)^{\frac{2}{a}}=\big(\frac{x_n}{k\xi}\big)^{\frac{2}{a}}.\]
Using \eqref{eq:W} again gives 
\begin{equation*}
	|W|=\Big|\big(\frac{x_n}{\xi}\big)^{\frac{2}{a}}-r^2\Big|^{\frac{1}{b}}\geq  \Big|\big(\frac{x_n}{\xi}\big)^{\frac{2}{a}}-\big(\frac{x_n}{k\xi}\big)^{\frac{2}{a}}\Big|^{\frac{1}{b}}.
\end{equation*}
For the case $k\in [2^{-\frac{a}{2}},4]$, we note that 
\begin{align*}
	\Big|\big(\frac{x_n}{\xi}\big)^{\frac{2}{a}}-\big(\frac{x_n}{k\xi}\big)^{\frac{2}{a}}\Big|^{\frac{1}{b}}=\Big(1-\big(\frac{1}{k}\Big)^{\frac{2}{a}}\big)^{\frac{1}{b}}\big(\frac{x_n}{\xi}\big)^{\frac{2}{a}}\geq \min\left\{\big(1-\big(\frac{1}{4}\big)^{\frac{2}{a}}\big)^{\frac{1}{b}},1\right\}\cdot\big(\frac{x_n}{\xi}\big)^{\frac{2}{ab}}=\big(1-\big(\frac{1}{4}\big)^{\frac{2}{a}}\big)^{\frac{1}{b}}\cdot\big(\frac{x_n}{\xi}\big)^{\frac{2}{ab}}.
\end{align*}
For the case $k\in (0,2^{-\frac{a}{2}}]$, 
we can obtain 
\begin{align*}
	\Big|\big(\frac{x_n}{\xi}\big)^{\frac{2}{a}}-\big(\frac{x_n}{k\xi}\big)^{\frac{2}{a}}\Big|^{\frac{1}{b}}=\Big(\big(\frac{1}{k}\big)^{\frac{2}{a}}-1\Big)^{\frac{1}{b}}\big(\frac{x_n}{\xi}\big)^{\frac{2}{a}}\geq  \big(\frac{x_n}{\xi}\big)^{\frac{2}{ab}}.
\end{align*}
Summing up these two cases, we have
\[|W|\geq \big(1-\big(\frac{1}{4}\big)^{\frac{2}{a}}\big)^{\frac{1}{b}}\cdot\big(\frac{x_n}{\xi}\big)^{\frac{2}{ab}},\ \ \forall  x\in V. \]
Thus the proof of \eqref{eq:W-bound} has been completed. 
We now return to unify the form of bounds in \eqref{eq:DW}. In view of \eqref{eq:W-bound}, 
we deduce  that 
\[
|W|^{(2-a)b}\leq \max\left\{\Big(\big(1-\big(\frac{1}{4}\big)^{\frac{2}{a}}\big)\Big)^{2-a},1\right\}\cdot x_n{}^{\frac{4}{a}-2}\cdot\xi^{2-\frac{4}{a}}.\]
and hence 
\[
|W|^{2-ab}\leq \max\left\{\Big(\big(1-\big(\frac{1}{4}\big)^{\frac{2}{a}}\big)\Big)^{2-a},1\right\}\cdot|W|^{2-2b}\cdot x_n{}^{\frac{4}{a}-2}\cdot\xi^{2-\frac{4}{a}}.\]
Hence we can infer from  \eqref{eq:DW} that 
\begin{equation}\label{eq:DW-1}
	1+|DW|^2\in \left[\frac{4}{a^2b^2}\cdot|W|^{2-2b}\cdot x_n{}^{\frac{4}{a}-2}\cdot\xi^{-\frac{4}{a}},C_{16}(a,b,\eta,\varepsilon)\cdot|W|^{2-2b}\cdot x_n{}^{\frac{4}{a}-2}\cdot\xi^{-\frac{4}{a}}\right].
\end{equation}

Up to now, it holds for any $\gamma \in \mathbb{R}$ that
\begin{equation}\label{eq:DWgamma}
	\left(1+|DW|^2\right)^{-\frac{\gamma}{2}} \leq C_{17}(a,b,\eta,\varepsilon)  \cdot|W|^{(b-1) \gamma}\cdot x_n{}^{(1-\frac{2}{a})\gamma}\cdot \xi^{\frac{2}{a}\gamma}.
\end{equation}
It is time to substitute \eqref{eq:dx} and \eqref{eq:DWgamma} into \eqref{eq-widetildef} to derive 
\begin{align*} 
	{[\widetilde{f}(x, W, DW)]^{-1} } &= A^{-1} \operatorname{dist}(x, \partial \varOmega)^{n+1-\beta}|W|^{\alpha}\left(1+|DW|^2\right)^{-\frac{\gamma}{2}} \\
	& \leq   
	A^{-1}C_{15}(a,\eta,\varepsilon,\beta,n)C_{17}(a,b,\eta,\varepsilon)\cdot |W|^{\alpha+(b-1) \gamma}\cdot x_n{}^{n+1-\beta+(1-\frac{2}{a})\gamma}\cdot \xi^{\frac{2}{a}\gamma}.\stepcounter{equation}\tag{\theequation}\label{eq:f}
\end{align*}

Putting (\ref{eq:F}) and (\ref{eq:f}) back into \eqref{eq37} now leads us to 
\begin{align*}
	\widetilde{H}[W]= & F\left(\lambda_1(D^2 W), \cdots, \lambda_n(D^2 W)\right) \cdot[\widetilde{f}(x, W, DW)]^{-1} \cdot \\
	\leq   & C_{18}(a, b,\eta,\varepsilon, A,B,\beta,\gamma,s,t, n)\cdot   |W|^{\alpha+(b-1) \gamma+(1-b) s+(1-ab) t}\cdot x_n{}^{n+1-\beta+(1-\frac{2}{a})\gamma}\cdot \xi^{\frac{2}{a}\gamma-2t}.
\end{align*}
We note that \eqref{eq:W-bound} yields
\begin{align*}
	|W|^{\alpha+(b-1) \gamma+(1-b) s+(1-ab) t}&\leq C_{19}(a,b,\alpha,\gamma,s,t)   
	\cdot x_n{}^{\frac{2}{ab}\left(\alpha+(b-1) \gamma+(1-b) s+(1-ab) t\right)} 
\cdot \xi{}^{-\frac{2}{ab}\left(\alpha+(b-1) \gamma+(1-b) s+(1-ab) t\right)}.
\end{align*}
It follows that 
\begin{align*}
	\widetilde{H}[W]
	&\leq    C_{20}(a, b,\eta,\varepsilon, A,B,\alpha,\beta,\gamma,s,t, n)\cdot    x_n{}^{n+1-\beta+\gamma+\frac{2}{ab}\left(\alpha-\gamma+(1-b) s+(1-ab) t\right)} 
	\cdot \xi^{-2t-\frac{2}{ab}\left(\alpha- \gamma+(1-b) s+(1-ab) t\right)}.
\end{align*}

We first let 
\begin{equation}\label{conditionb-1}
	n+1-\beta+\gamma+\frac{2}{ab}\left(\alpha-\gamma+(1-b) s+(1-ab) t\right) \geq  0.
\end{equation}
Based on the fact $x_n\leq \frac{1}{4}\eta\varepsilon^a$, we obtain
\[x_n{}^{n+1-\beta+\gamma+\frac{2}{ab}\left(\alpha-\gamma+(1-b) s+(1-ab) t\right)}\leq C_{21}(a,b,\eta,\varepsilon,\alpha,\beta,\gamma,s,t,n).\]
This further implies that 
\begin{equation*}
	\widetilde{H}[W]
	\leq    C_{22}(a, b,\eta,\varepsilon, A,B,\alpha,\beta,\gamma,s,t, n)   \cdot \xi^{-2t-\frac{2}{ab}\left(\alpha- \gamma+(1-b) s+(1-ab) t\right)}.
\end{equation*}
Now we set 
\begin{equation}\label{conditionb-2}
	-2t-\frac{2}{ab}\left(\alpha- \gamma+(1-b) s+(1-ab) t\right)\leq 0.
\end{equation}
Therefore, we can take 
\begin{equation}\label{eq:take-xi-2}
	\xi=\xi(a, b,\eta,\varepsilon, A,B,\alpha,\beta,\gamma,s,t, n)\geq\xi_0
\end{equation}
large enough (see the definition of $\xi_0$ in \eqref{eq4.8})  such that
\begin{equation}\label{eq:H}
	\widetilde{H}[W]
	\leq 1 \ \ \text{ in }V,
\end{equation}
where we have imposed two condtions \eqref{conditionb-1} and \eqref{conditionb-2}, i.e.  $b$ satisfies
\begin{equation}\label{eq:take-b-2}
	b\leq\min\left\{\frac{2(\alpha-\gamma+s+t)}{a(\beta-\gamma-n-1+2t)+2s},\frac{\alpha-\gamma+s+t}{s}\right\}=\frac{2(\alpha-\gamma+s+t)}{a(\beta-\gamma-n-1+2t)+2s}.
\end{equation}
For convenience, we can also take 
\begin{equation*}
	b=b_0=\frac{2(\alpha-\gamma+s+t)}{a(\beta-\gamma-n-1+2t)+2s}.
\end{equation*}

In summary, putting 
\eqref{eq:Hboundary} and (\ref{eq:H}) together shows that $W_{\operatorname{sup}}$ is a supersolution to the problem (\ref{eq01})-\eqref{eq01boundary} over $V$.  
Here, $b$ satisfies \eqref{eq:take-b-2}, and  $\xi$ satisfy \eqref{eq:take-xi-2'} and  \eqref{eq:take-xi-2}.

	\subsection{Final  estimate on lower bound}

	Using the supersolution $W_{\operatorname{sup}}$ constructed in the previous subsection, along with the assumption \eqref{eq:assumption}, we can apply the the comparison principle to conclude that 
	\begin{equation}\label{eq:comparison-2}
		u\leq W\leq 0 \ \ \text{ in }V.
	\end{equation}
	By \eqref{eq:W}, we can further deduce 
	\begin{equation}\label{eq:comparison-3}
		|u(y)-u(P)|  \geq|W(y)-W(P)|=|W(y)|=\left|\left(\frac{y_n}{\xi}\right)^{\frac{2}{a}}-|y'|^2\right|^{\frac{1}{b}},\ \ \ \forall y\in V.
	\end{equation}

	Now we construct the following subdomain $V_0\subseteq V$:
	\begin{equation*}
		V_0:=\left\{x \in \mathbb{R}^n:\left|x^{\prime}\right|<\big(\frac{1}{8}\big)^{\frac{1}{a}} \varepsilon,\  2\eta\left|x^{\prime}\right|^a<x_n < \frac{1}{4} \eta\varepsilon^a\right\}.
	\end{equation*}
	For any $y\in V_0$, it is apparent that $2\eta\left|y^{\prime}\right|^a<y_n$ and hence \eqref{eq:take-xi-2'} gives
	\[|y'|^2<\big(\frac{1}{2}\big)^{\frac{2}{a}}\big(\frac{y_n}{\eta}\big)^{\frac{2}{a}}=\frac{1}{2}\big(\frac{y_n}{\xi}\big)^{\frac{2}{a}}.\]
	Taking this back to \eqref{eq:comparison-3} then shows 
	\begin{equation}\label{eq:compariosn-4}
		|u(y)-u(P)|  \geq
		\big(\frac{1}{2}\big)^{\frac{1}{b}} \left(\frac{y_n}{\xi}\right)^{\frac{2}{ab}}\geq C(a, b,\eta,\varepsilon, A',B,\alpha,\beta,\gamma,s,t, n)y_n{}^{\frac{2}{ab}},\ \ \ \forall y\in V_0.
	\end{equation}
	Starting from \eqref{eq:compariosn-4}, we provide estimates for various cases when $y$ lies at different locations.
	
	We first consider $y=(0,y_n)$, i.e. $y$ lies on the $x_n$-axis. Since $P$ is of  interior-$(a, \eta, \varepsilon)$-type  with $a\in [2,+\infty)$, then by Lemma \ref{lemma:interior}, $P$ satisfies the interior sphere condition. Thus there exists a constant $h>0$ such that 
	\[y_n=\operatorname{dist}(y,P),\ \ \  \forall y=(0,y_n)\in V_0\cap V_1,\]
	where 
	\begin{equation*}
		V_1:=\left\{x\in\mathbb{R}^n:\ x_n<h\right\}.
	\end{equation*}
	Now it follows from \eqref{eq:compariosn-4} that
	\begin{equation}\label{eq:case1}
		|u(y)-u(P)|  \geq
		C(a, b,\eta,\varepsilon, A,B,\alpha,\beta,\gamma,s,t, n)\left(\operatorname{dist}(y,P)\right)^{\frac{2}{ab}}, \ \ \ \forall y=(0,y_n)\in V_0\cap V_1.
	\end{equation}

	Next we consider points in $V_0\cap V_1$ but not on the $x_{n}$-axis. For any point $y_0\in V_0\cap V_1$ that is not on the $x_{n}$-axis, we denote   $\theta(y_0)$ as the angle between $l_{y_0P}$ and the $x_{n}$-axis. 
	Then for any $y\in V_0\cap V_1\cap l_{y_0P}$, using 
	$y_{n} = d_{y}\cos\theta(y_0)= \operatorname{dist}(y,P)\cos\theta(y_0)$, 
	we can derive from  \eqref{eq:compariosn-4} that
	\begin{align*}
		|u(y)-u(P)| 
		&\geq   C(a, b,\eta,\varepsilon, A,B,\alpha,\beta,\gamma,s,t, n) \big(\operatorname{dist}(y,P)\cos\left(\theta(y_0)\right)\big){}^{\frac{2}{ab}} \\ 
		&=  C(a, b,\eta,\varepsilon, A,B,\alpha,\beta,\gamma,s,t, n)  \left(\operatorname{dist}(y,P)\right)^{\frac{2}{ab}},\stepcounter{equation}\tag{\theequation}\label{eq:case2}
	\end{align*}
	where in the last step we have used the fact that 
	$\big(\cos\left(\theta(y_0)\right)\big){}^{\mu(a)}=C(a,\eta,\varepsilon,\alpha,\beta,\gamma,s,t,n)$. Since $y_0$ is arbitrarily fixed, we note that \eqref{eq:case2} actually holds for any $y\in V_0\cap V_1$ that is not on the $x_n$-axis. Combined with \eqref{eq:case1}, this leads us to 
	\begin{equation*}
		|u(y)-u(P)| 
		\geq  C(a, b,\eta,\varepsilon, A,B,\alpha,\beta,\gamma,s,t, n)  \left(\operatorname{dist}(y,P)\right)^{\frac{2}{ab}},\ \ \ \forall y \in V_0\cap V_1.
	\end{equation*}
	
	We are now in a position to consider $y\in \varOmega_{1/2,P}\setminus (V_0\cap V_1)$. 
	Obviously, there exists at least one  point in $V_0\cap V_1\cap l_{yP}$. Let us fix $y_1\in V_0\cap V_1\cap l_{yP}$. By virtue of \eqref{eq:case2}, we have 
	\begin{equation*}
		|u(y_1)-u(P)| 
		\geq C(a, b,\eta,\varepsilon, A',B,\alpha,\beta,\gamma,s,t, n)  \left(\operatorname{dist}(y_1,P)\right)^{\frac{2}{ab}}.
	\end{equation*}
	It is clear that $u(y)\leq u(y_1)\leq 0$. Hence, by first choosing some constant $0<C(a,\eta,\varepsilon)\leq\frac{\operatorname{dist}(y_1,P)}{\frac{1}{4}\eta\varepsilon^a}$ and then using  $\frac{1}{4}\eta\varepsilon^a\geq \operatorname{dist}(y,P)$ when $y\in\varOmega_{1/2,P}$,  we get 
	\begin{align*}
		|u(y)-u(P)|&\geq |u(y_1)-u(P)|\\
		&\geq C(a, b,\eta,\varepsilon, A,B,\alpha,\beta,\gamma,s,t, n)  \left(\operatorname{dist}(y_1,P)\right)^{\frac{2}{ab}}\\
		&\geq C(a, b,\eta,\varepsilon, A,B,\alpha,\beta,\gamma,s,t, n)  \left(\frac{1}{4}\eta\varepsilon^a\right)^{\frac{2}{ab}}\\
		&\geq C(a, b,\eta,\varepsilon, A,B,\alpha,\beta,\gamma,s,t, n)  \left(\operatorname{dist}(y,P)\right)^{\frac{2}{ab}},\ \ \ \forall y \in \varOmega_{1/2,P}\setminus( V_0\cap V_1).
	\end{align*} 
	
	Combining all the above estimates now gives
	\begin{equation*}
		|u(y)-u(P)| 
		\geq  C(a, b,\eta,\varepsilon, A,B,\alpha,\beta,\gamma,s,t, n)  \left(\operatorname{dist}(y,P)\right)^{\frac{2}{ab}},\ \ \ \forall y \in \varOmega_{1/2,P}.
	\end{equation*}
	We can further obtain that 
	for any $x\in \partial\varOmega$ that is of  interior-$(a, \eta, \varepsilon)$-type , we have 
	\begin{equation*}
		|u(y)-u(x)| 
		\geq  C(a, b,\eta,\varepsilon, A,B,\alpha,\beta,\gamma,s,t, n)  \left(\operatorname{dist}(y,x)\right)^{\frac{2}{ab}},\ \ \ \forall x\in\partial\varOmega,\ \forall y \in \varOmega_{1/2,x}.
	\end{equation*}
	This completes the proof of the lower bound part in Theorem \ref{thm3}.
	
	In view of the fact that 
	\[\frac{2}{ab}\geq  \frac{\beta-\gamma+2t-n-1}{\alpha-\gamma+s+t}+\frac{2 s}{a(\alpha-\gamma+s+t)}=\mu(a),\]
	we can conclude that 
	\begin{equation*}
		|u(y)-u(x)| 
		\geq  C(a, b,\eta,\varepsilon, A,B,\alpha,\beta,\gamma,s,t, n)  \left(\operatorname{dist}(y,x)\right)^{\mu(a)},\ \ \ \forall x\in\partial\varOmega,\ \forall y \in \varOmega_{1/2,x},
	\end{equation*}
	and therefore
	\begin{align*}
		|u(y)-u(x)| 
		\geq  C(a_{\min}, b,\eta,\varepsilon, A,B,\alpha,\beta,\gamma,s,t, n)  \left(\operatorname{dist}(y,x)\right)^{\mu(a_{\min})},\ \ \ \forall x\in\partial\varOmega,\ \forall y \in \varOmega_{1/2,x}.
	\end{align*}
	This also implies 
	\[\text{$\varOmega$ is a lower-$(\mu(a_{\min}),C(a_{\min}, b,\eta,\varepsilon, A,B,\alpha,\beta,\gamma,s,t, n)  )$ type domain about  $u$.}\]
	We have thus proved  case $a\in[2,+\infty)$ for Theorem \ref{thm1-2} and the lower bound part in Theorem \ref{thm1}. Up to now, the proof of  case $a\in[2,+\infty)$ in Theorem \ref{thm1} has been completed.

	\section{Further discussion}\label{sec:discussion}

	\subsection{Convexity parameter $a$ for domain}
	
	In this subsection, we analyze values of the  convexity parameter $a$ for $\varOmega$ and its subdomains, and hence the discussion is restricted to the   case $a\in[2,+\infty)$; we also give an alternative proof of \eqref{eq09} in the case   $a\in[2,+\infty)$ starting from the estimate
	\begin{equation}\label{eq88}
		m\left(\operatorname{dist}(y,P)\right)^{\mu(a_{\varOmega_{1/2,P}}(P))} \leq \vert u(y)-u(P) \vert \leq M\left(\operatorname{dist}(y,P)\right)^{\mu(a_{\varOmega_{1/2,P}}(P))},\quad \forall y\in \varOmega_{1/2,P},
	\end{equation}
	which is collected from  the proof in Section \ref{sec:sub} and Section \ref{sec:super}.
	
	Although $a_{\varOmega_{1/2,P}}(P)\leq a_{\varOmega}(P)$, we note that  $\mu(a)$ is inversely proportional to $a$ by definition, and hence
	\[\mu(a_{\varOmega}(P))\leq \mu(a_{\varOmega_{1/2,P}}(P)).\]
	This means that $\mu(a_{\varOmega}(P))$ will be a  better upper exponent. 
	In fact, we have already used $\mu(a_{\varOmega}(P))$ in Section \ref{sec:sub} based on the fact 
	\begin{equation}\label{eq:a-P}
		a_{\varOmega_{1/2,P}}(P)=a_{\varOmega}(P).
	\end{equation}
	Our target now is to prove  \eqref{eq:a-P}.

	Because $\varOmega_{1/2,P} \subseteq \varOmega$ and $\varOmega_{1/2,P} \cap \varOmega = P$,  it is obvious that
	\begin{equation}\label{eq6.3}
		a_{\varOmega_{1/2,P}}(P) \leq  a_{\varOmega}(P).
	\end{equation}
	Therefore, to prove  (\ref{eq:a-P}), we just need to prove
	\begin{equation}\label{eq6.3'}
		a_{\varOmega_{1/2,P}}(P)\geq a_{\varOmega}(P).
	\end{equation}
	In the following part, our main idea is to verify that there exists a domain $D$ such that 
	\begin{equation}\label{eq6.4}
		D \subset \varOmega_{1/2,P} \subset \varOmega,
		\ \ 	\partial\varOmega_{1/2,P} \cap\partial D = P,\ \ 
		a_{\varOmega_{1/2,P}}(P) =  a_{D}(P).
	\end{equation}

	Here we present a series of definitions and propositions that we need.

	\begin{definition}
		Let $\varOmega$ be a bounded convex domain, $D\subseteq \varOmega$ be convex domain, and $\partial D\cap\partial \varOmega = P$. 
		For any point $Q\in\partial\varOmega$ with $Q\neq P$, 	 
		we define the domain ratio $k_{PQ}(D,\varOmega)$ about $D$ and $\varOmega$ as follows: $$k_{PQ}(D,\varOmega) =\frac{\operatorname{dist}(P,R)}{\operatorname{dist}(P,Q)}>0, 
		$$
		where $R\in \partial D\cap l_{PQ} $ and $R\neq P$.
	\end{definition}

	\begin{proposition}\label{theA6}
		Let $\varOmega$ be a bounded convex domain and $u$ be a convex function over $\varOmega$. If $\varOmega$ is both an upper-$(\mu,M)$-type domain and a lower-$(\nu,m)$-type domain with $0<\nu\leq\mu\leq 1$ and $M\geq m>0$, 
		then there exists some  constants $0<c_1<c_2$, depending only on $\mu$, $\nu$, $M$, $m$ and $\operatorname{diam}(\varOmega)$, such that 
		\begin{align*}
			&\mathop{\inf} _{P,Q\in\partial\varOmega,Q\neq P}{k_{PQ}(\varOmega_{1/2,P},\varOmega)} \geq   c_1,\\
			&\mathop{\sup} _{P,Q\in\partial\varOmega,Q\neq P}{k_{PQ}(\varOmega_{1/2,P},\varOmega)} \leq  c_2. 
		\end{align*}
	\end{proposition}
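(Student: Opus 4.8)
The plan is to reduce the whole statement to a single uniform lower bound on $k_{PQ}(\varOmega_{1/2,P},\varOmega)$, and to extract that bound from the upper‑ and lower‑type inequalities evaluated at one well‑chosen common point. Fix $P,Q\in\partial\varOmega$ with $Q\neq P$, put $d:=\operatorname{dist}(P,Q)$, let $y_Q\in l_{PQ}$ be the minimiser of $u$ on $[P,Q]$ closest to $P$, and set $t^{*}:=\operatorname{dist}(P,y_Q)$, so that $k_{PQ}(\varOmega_{1/2,P},\varOmega)=t^{*}/d$. The first remark is that the minimiser of $u$ along $[Q,P]$ is the \emph{same} point $y_Q$, whence $k_{PQ}(\varOmega_{1/2,P},\varOmega)+k_{QP}(\varOmega_{1/2,Q},\varOmega)=1$. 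Consequently it suffices to produce a constant $c_1\in(0,\tfrac12]$, depending only on $\mu,\nu,M,m,\operatorname{diam}(\varOmega)$, with $k_{PQ}(\varOmega_{1/2,P},\varOmega)\ge c_1$ for every ordered pair $(P,Q)$; the upper bound then follows with $c_2:=1-c_1$.

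For the lower bound I would use that $[P,y_Q]\subseteq\varOmega_{1/2,P}$ and $[y_Q,Q]\subseteq\varOmega_{1/2,Q}$ are among the defining segments of the two domains, so $y_Q\in\overline{\varOmega_{1/2,P}}\cap\overline{\varOmega_{1/2,Q}}$, and by continuity of $u$ the type inequalities apply at $y_Q$. Since $u(y_Q)\le\min\{u(P),u(Q)\}$, upper-$(\mu,M)$-type and lower-$(\nu,m)$-type at $P$, together with lower-$(\nu,m)$-type at $Q$, give
\begin{equation*}
m\,(t^{*})^{\nu}\ \le\ u(P)-u(y_Q)\ \le\ M\,(t^{*})^{\mu},
\qquad
u(Q)-u(y_Q)\ \ge\ m\,(d-t^{*})^{\nu}.
\end{equation*}
Writing $u(P)-u(y_Q)=\bigl(u(Q)-u(y_Q)\bigr)-\bigl(u(Q)-u(P)\bigr)$ and using that $u$, being upper-$(\mu,M)$-type and continuous on $\overline{\varOmega}$, is globally $\mu$-Hölder with $|u(P)-u(Q)|\le M\,d^{\mu}$ — which incidentally re‑proves $\varphi\in C^{\mu}(\partial\varOmega)$ — one obtains $u(P)-u(y_Q)\ge m\,(d-t^{*})^{\nu}-M\,d^{\mu}$. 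If $t^{*}\le d/2$ and the "balanced" condition $M\,d^{\mu}\le\tfrac12 m\,(d/2)^{\nu}$ holds, the right side is at least $\tfrac12 m\,(d/2)^{\nu}$, so comparing with $u(P)-u(y_Q)\le M\,(t^{*})^{\mu}$ and using $\operatorname{diam}(\varOmega)<1$ (Remark \ref{remark:diam}) yields $t^{*}\ge\tfrac12(m/(2M))^{1/\mu}d$; if instead $t^{*}>d/2$ this bound is trivial. Thus on every balanced chord $k_{PQ}(\varOmega_{1/2,P},\varOmega)\ge\tfrac12(m/(2M))^{1/\mu}$. The balanced condition is automatic when $u|_{\partial\varOmega}$ is constant (then $u(P)=u(Q)$), and, when $\nu<\mu$, for all chords with $d\le\varepsilon_0:=(m/(2^{\nu+1}M))^{1/(\mu-\nu)}$.

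The remaining "non‑balanced" chords satisfy $d\ge\varepsilon_0$, and for these I would argue by compactness: the assignment $(P,Q)\mapsto y_Q$ is upper semicontinuous, the lower-$(\nu,m)$-type condition at $P$ and at $Q$ forces $y_Q\neq P$ and $y_Q\neq Q$, hence $k_{PQ}(\varOmega_{1/2,P},\varOmega)>0$ for every pair, and the infimum of $k_{PQ}(\varOmega_{1/2,P},\varOmega)$ over the compact set $\{(P,Q): P,Q\in\partial\varOmega,\ \operatorname{dist}(P,Q)\ge\varepsilon_0\}$ is attained and strictly positive. One then takes $c_1$ to be the smaller of this infimum and the balanced constant above, and $c_2:=1-c_1$; both depend only on $\mu,\nu,M,m,\operatorname{diam}(\varOmega)$, which is the conclusion.

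The hard part is the glueing of these two regimes, and above all the borderline exponent $\mu=\nu$ with non‑constant boundary data: there $M\,d^{\mu}$ and $m\,(d/2)^{\mu}$ are of the same order, so the one–dimensional information carried by $u|_{[P,Q]}$ — which is invariant under affine reparametrisation and therefore cannot by itself locate the minimiser — is not enough; one must genuinely exploit the convexity of $u$ on all of $\varOmega$, e.g.\ by pairing $y_Q$ with a boundary point lying far from $y_Q$ through which $u$ is forced to descend, so as to push $u(y_Q)$ well below both $u(P)$ and $u(Q)$ and recover a lower bound on $u(P)-u(y_Q)$ of order $d^{\mu}$.
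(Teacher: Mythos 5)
Your proposal takes a genuinely different route and, to its credit, isolates a subtlety that the paper's proof elides: when $\varphi$ is non-constant, the cross-term $|u(P)-u(Q)|$ interferes with the one-chain estimate, and one cannot directly relate $|u(y_Q)-u(P)|$ to $|u(y_Q)-u(Q)|$. However, as you yourself concede, the argument does not close. The compactness step for the non-balanced chords is the main gap: the infimum of $k_{PQ}$ over the compact set $\{\operatorname{dist}(P,Q)\ge\varepsilon_0\}$ is indeed positive, but it depends on the specific convex function $u$ and domain $\varOmega$ (the type conditions do not pin down the location of the minimiser $y_Q$ on a long chord), so it cannot produce a constant depending only on $\mu,\nu,M,m,\operatorname{diam}(\varOmega)$ as the proposition asserts. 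And the borderline case $\mu=\nu$ with non-constant boundary data is explicitly left open.

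The paper's own proof takes $R=y_Q\in\partial\varOmega_{1/2,P}\cap\partial\varOmega_{1/2,Q}$, rewrites $k_{PQ}=1/\bigl(1+\operatorname{dist}(R,Q)/\operatorname{dist}(R,P)\bigr)$ (the same complementarity you use), and then bounds the ratio $\operatorname{dist}(R,Q)/\operatorname{dist}(R,P)$ directly from the upper/lower type inequalities at $P$ and at $Q$, splitting only on whether $\operatorname{dist}(R,Q)\ge 1$ or $\le 1$; this gives explicit constants built from $(M/m)^{1/\mu}$, $(m/M)^{1/\nu}$ and $(\operatorname{diam}(\varOmega))^{1-\nu/\mu}$, with no compactness and no restriction on $d$. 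The step where the paper declares the extremal ratio is attained when $M(\operatorname{dist}(R,P))^\mu=m(\operatorname{dist}(R,Q))^\nu$ (resp.\ the mirror equality) implicitly treats $|u(R)-u(P)|$ and $|u(R)-u(Q)|$ as equal, i.e.\ $u(P)=u(Q)$ — precisely the constant-boundary setting in which the proposition is applied in Section 5 for Theorem \ref{thm1}. If you insert $u(P)=u(Q)$ into your chains, your cross-term vanishes, the balanced/non-balanced split becomes unnecessary, and the remainder is exactly the paper's two-case computation; you should compare your work with that calculation and note that, without that equality, neither your argument nor the paper's literally establishes the uniform bound.
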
 
	\begin{proof}
		Based on Definition \ref{def2}, 
		for any points $P,Q\in\partial\varOmega$ ($Q\neq P$) and 
		$R\in\partial\varOmega_{1/2,P}\cap\partial\varOmega_{1/2,Q}$, we have 
		\begin{equation*} 
			m\left(\operatorname{dist}(R,P) \right)^{\nu} \leq \vert u(R)-u(P) \vert\leq M\left(\operatorname{dist}(R,P)\right)^{\mu},
		\end{equation*}
		\begin{equation*} 
			m\left(\operatorname{dist}(R,Q) \right)^{\nu} \leq \vert u(R)-u(Q) \vert\leq M\left(\operatorname{dist}(R,Q)\right)^{\mu}.
		\end{equation*}
		We note that 
		\begin{align*}
			k_{PQ}(\varOmega_{1/2,P},\varOmega)
			&=\frac{\operatorname{dist}(P,R)}{\operatorname{dist}(P,Q)}
			= \frac{\operatorname{dist}(P,R)}{\operatorname{dist}(P,R)+\operatorname{dist}(R,Q)}=\frac{1}{1+ \frac{\operatorname{dist}(R,Q)}{\operatorname{dist}(R,P)}}.
		\stepcounter{equation}\tag{\theequation}\label{eq:inf}
		\end{align*}
		Therefore it suffices to deduce 
		\[\mathop{\sup} _{P,Q\in\partial\varOmega,Q\neq P}\frac{\operatorname{dist}(R,Q)}{\operatorname{dist}(R,P)}\ \ \text{ and }\ \ \mathop{\inf} _{P,Q\in\partial\varOmega,Q\neq P}\frac{\operatorname{dist}(R,Q)}{\operatorname{dist}(R,P)},\]
		which is clear to be taken when 
		\begin{equation*}
			 M\left(\operatorname{dist}(R,P)\right)^{\mu}=m\left(\operatorname{dist}(R,Q) \right)^{\nu}\ \ \text{ and }\ \ m\left(\operatorname{dist}(R,P)\right)^{\nu}=M\left(\operatorname{dist}(R,Q) \right)^{\mu}
		\end{equation*}
		respectively, see Figure \ref{Fig.4}. 
		
			\begin{figure}[htbp]  
			\centering 
			\includegraphics[scale=0.45]{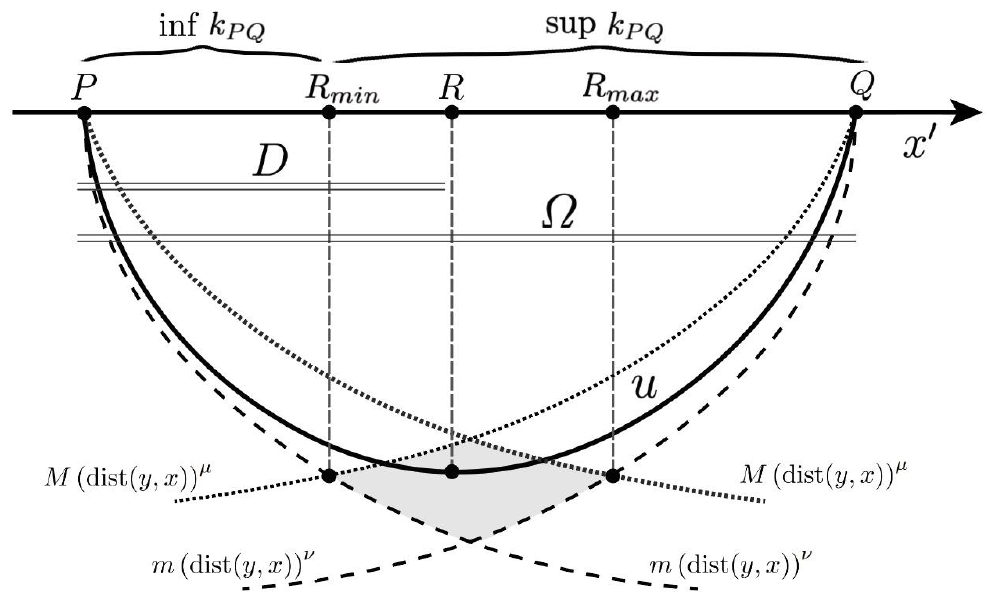} 
			\caption{$k_{PQ}$ touches minimum or maximum value at the point $R_{\min}$ or $R_{\max}$ respectively.}
			\label{Fig.4} 
		\end{figure}
		
		We first consider  $M\left(\operatorname{dist}(R,P)\right)^{\mu}=m\left(\operatorname{dist}(R,Q) \right)^{\nu}$. There are two cases.
		
		\textit{Case 1}:  $\operatorname{dist}(R,Q)\geq 1$. In this case, it follows from  the fact $1-\frac{\nu}{\mu}\geq 0$ that 
		\[\frac{\operatorname{dist}(R,Q) }{\operatorname{dist}(R,P)}= \left(\frac{M}{m}\right)^{\frac{1}{\mu}}\left(\operatorname{dist}(R,Q)\right)^{1-\frac{\nu}{\mu}}  \leq \left(\frac{M}{m}\right)^{\frac{1}{\mu}}\left(\operatorname{diam}(\varOmega)\right)^{1-\frac{\nu}{\mu}}.\]
		
		\textit{Case 2}:  $\operatorname{dist}(R,Q)\leq 1$. Since $\nu\leq\mu$, we can derive
		\begin{align*}
			M\left(\operatorname{dist}(R,P)\right)^{\mu}=m\left(\operatorname{dist}(R,Q) \right)^{\nu}\geq m\left(\operatorname{dist}(R,Q) \right)^{\mu},
		\end{align*}
		which implies that 
		\[\frac{\operatorname{dist}(R,Q) }{\operatorname{dist}(R,P)}\leq \left(\frac{M}{m}\right)^{\frac{1}{\mu}}.\]
		
		To sum up, we have
		\[\mathop{\sup} _{P,Q\in\partial\varOmega,Q\neq P}\frac{\operatorname{dist}(R,Q) }{\operatorname{dist}(R,P)}\leq \left(1+\left(\operatorname{diam}(\varOmega)\right)^{1-\frac{\nu}{\mu}}\right)\left(\frac{M}{m}\right)^{\frac{1}{\mu}}.\]
		Taking this back to \eqref{eq:inf} gives
		\begin{equation*}
			\mathop{\inf} _{P,Q\in\partial\varOmega,Q\neq P}{k_{PQ}(\varOmega_{1/2,P},\varOmega)}\geq \frac{1}{1+ (1+\left(\operatorname{diam}(\varOmega)\right)^{1-\frac{\nu}{\mu}}) \left(\frac{M}{m}\right)^{\frac{1}{\mu}}} =c_1.
		\end{equation*}

		Now we consider  $m\left(\operatorname{dist}(R,P)\right)^{\nu}=M\left(\operatorname{dist}(R,Q) \right)^{\mu}$. There are also two cases.

			\textit{Case 1}:  $\operatorname{dist}(R,Q)\geq 1$. In this case, it follows from   the fact $1-\frac{\nu}{\mu}\geq 0$ that 
			\[\frac{\operatorname{dist}(R,Q) }{\operatorname{dist}(R,P)}= \left(\frac{m}{M}\right)^{\frac{1}{\mu}}\left(\operatorname{dist}(R,Q)\right)^{1-\frac{\nu}{\mu}}  \geq \left(\frac{m}{M}\right)^{\frac{1}{\mu}}.\]

			\textit{Case 2}:  $\operatorname{dist}(R,Q)\leq 1$. Since $\mu\geq\nu$, we can derive
			\begin{align*}
				m\left(\operatorname{dist}(R,P)\right)^{\nu}=M\left(\operatorname{dist}(R,Q) \right)^{\mu}\leq M\left(\operatorname{dist}(R,Q) \right)^{\nu},
			\end{align*}
			which implies that 
			\[\frac{\operatorname{dist}(R,Q) }{\operatorname{dist}(R,P)}\geq \left(\frac{m}{M}\right)^{\frac{1}{\nu}}.\]

			In conclusion, we obtain
			\[\mathop{\inf} _{P,Q\in\partial\varOmega,Q\neq P}\frac{\operatorname{dist}(R,Q) }{\operatorname{dist}(R,P)}\geq\min\left\{\left(\frac{m}{M}\right)^{\frac{1}{\mu}},\left(\frac{m}{M}\right)^{\frac{1}{\nu}}\right\}=\left(\frac{m}{M}\right)^{\frac{1}{\nu}}.\]
			Taking this back to \eqref{eq:inf} finally yields
			\begin{equation*}
				\mathop{\sup} _{P,Q\in\partial\varOmega,Q\neq P}{k_{PQ}(\varOmega_{1/2,P},\varOmega)}\leq \frac{1}{1+  \left(\frac{m}{M}\right)^{\frac{1}{\nu}}}=c_2,
		\end{equation*}
		proving the proposition.
	\end{proof}

	\begin{proposition}\label{pro1}
		Two homothetic domains that are tangent to each other have the same value of  $a$ for the exterior-$(a,\eta)$-type pairs at their tangent point.
	\end{proposition}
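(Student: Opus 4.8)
The plan is to observe that a homothety taking one of the two domains to the other must be a dilation centered at the common tangent point, and that such a dilation rescales the constant $\eta$ in the exterior-$(a,\eta)$-type condition of Definition~\ref{def1} while leaving the exponent $a$ untouched.

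First I would reduce to a centered dilation. Write the two homothetic domains as $D_1$ and $D_2$ with common boundary point $P$, and after a translation assume $P=O$. Discarding the trivial case where the homothety is the identity, and swapping $D_1$ and $D_2$ if necessary, we may take $D_1\subseteq D_2$ and let $h$, of ratio $\lambda>1$, be the homothety with $h(D_1)=D_2$. Since $c=h(c)$ for the fixed point $c$ and $h(\partial D_1)=\partial D_2$, we have $c\in\partial D_1$ iff $c\in\partial D_2$; a routine convexity argument (using that $h^{-1}$ maps $\overline{D_1}$ into itself and contracts it toward $c$) shows $c\in\overline{D_1}$, and $c$ cannot be interior to $D_1$, for otherwise every common boundary point $z\in\partial D_1\cap\partial D_2$ would be the $h$-image of some $x\in\partial D_1$ with $x$ strictly between $c$ and $z$, hence interior to $D_1$ — which is absurd unless $z=c$, so that $\partial D_1\cap\partial D_2=\varnothing$, contradicting tangency. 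Hence $c\in\partial D_1\cap\partial D_2=\{P\}=\{O\}$, i.e.\ $h(x)=\lambda x$. Because $D_1$ and $D_2$ are tangent at $O$ they share a supporting hyperplane there; after a rotation about $O$ — which commutes with $h$ — assume this hyperplane is $\{x_n=0\}$ with $D_1,D_2\subseteq\{x_n\ge 0\}$.

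The heart of the argument is then the elementary substitution $y=\lambda x$, which yields, for every $a\in[1,+\infty)$ and $\eta>0$,
\[
D_1\subseteq\{x:x_n\ge\eta|x'|^{a}\}\qquad\Longleftrightarrow\qquad D_2=\lambda D_1\subseteq\{x:x_n\ge\eta\lambda^{1-a}|x'|^{a}\};
\]
the auxiliary bounding-box constraints $x_n\le\operatorname{diam}$, $|x'|\le\operatorname{diam}$ appearing in Definition~\ref{def1} hold automatically for any bounded domain with a boundary point at $O$ and play no role. Thus $D_1$ is exterior-$(a,\eta)$-type at $P$ exactly when $D_2$ is exterior-$(a,\eta\lambda^{1-a})$-type at $P$, and reading this both ways shows that the collection of exponents $a$ occurring in an exterior-$(a,\eta)$-type pair at $P$ is the same for $D_1$ and $D_2$; in particular the distinguished value of $a$ (the infimum of that collection) is common to the two domains, which is the assertion. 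The case of external, orientation-reversing tangency is handled identically after composing the normalizing rotation with a reflection.

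The main obstacle is not analytic but geometric: one has to pin down that the homothety relating $D_1$ and $D_2$ is genuinely a dilation centered at the tangent point $P$ — which rests on the single-point contact together with convexity — and that one rotation simultaneously puts both domains into standard position, which uses the common supporting hyperplane furnished by tangency. Once that normalization is secured, the invariance of $a$ is the one-line substitution above, with $\eta$ merely picking up the factor $\lambda^{1-a}$.
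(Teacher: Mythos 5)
The paper does not actually prove Proposition~\ref{pro1}: right after its statement the authors write that it ``is clear from Figure~\ref{Fig.5} and we omit its proof.'' Your argument therefore cannot be compared against a proof in the paper; what you have done is supply the missing argument, and it is a correct one.

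The two nontrivial points you isolate are exactly the ones that need justification. First, you establish that the homothety relating $D_1$ and $D_2$ is in fact a dilation centered at the tangent point $P$: iterating the contraction $h^{-1}$ on the closed convex set $\overline{D_1}$ gives $c\in\overline{D_1}$, and the observation that $c$ interior would force $\partial D_1\cap\partial D_2=\varnothing$ pins $c$ down to the unique common boundary point. Second, you note that tangency gives a common supporting hyperplane at $P$, so a single rotation (which commutes with the centered dilation) normalizes both domains simultaneously; the substitution $y=\lambda x$ then shows that the condition $D_1\subseteq\{x_n\ge\eta|x'|^a\}$ transforms into $D_2\subseteq\{x_n\ge\eta\lambda^{1-a}|x'|^a\}$, so the admissible set of exponents $a$ is identical for $D_1$ and $D_2$ while only $\eta$ rescales. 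Your remark that the bounding-box constraints in Definition~\ref{def1} are automatic once $P=O\in\partial\varOmega$ for a bounded domain is also correct and disposes of the only other ingredient in the definition. This is a clean formalization of what the figure is meant to convey, and it is compatible with the use the paper makes of the proposition in Corollary~\ref{coro} (where the contact is internal and the distinguished $a$ is $a_{\varOmega}(P)$).

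One small stylistic caveat: the statement speaks of ``the value of $a$,'' which the paper elsewhere writes as $a_{\varOmega}(P)$ without formally defining it; you interpret it as the infimum over admissible exponents, which is consistent with Remarks~\ref{remark:etaepsilon} and~\ref{remark:eta'} and with how the quantity is used in Section~\ref{sec:discussion}, so no harm is done, but it would be worth flagging that this is an interpretation rather than something the paper defines explicitly.
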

	Intuitively, 
	Proposition \ref{pro1} is clear from  Figure \ref{Fig.5} and we omit its proof. Based on Proposition \ref{theA6} and Proposition \ref{pro1}, there hold  the following direct consequences.

	\begin{corollary}\label{coro}
		Assume all the conditions in Proposition \ref{theA6} hold. Then we have
		\begin{enumerate}[(i)]
			\item there exists a domain $D\subseteq \varOmega$ such that $\partial\varOmega\cap\partial D = P$, $D\subseteq \varOmega_{1/2,P}$,  and  $D$ is homothetic with $\varOmega$;
			\item 	the values of $a$ in the exterior-$(a,\eta)$-type  pairs for both $\varOmega$ and $\varOmega_{1/2,P}$ at   the point $P$ are equal (i.e. $a=a_{\min}$);
			\item the values of $a$ in the interior-$(a,\eta,\varepsilon)$-type pairs for  both $\varOmega$ and $\varOmega_{1/2,P}$ at the point $P$ are equal (i.e. $a=a_{\max}$).
		\end{enumerate}
	\end{corollary}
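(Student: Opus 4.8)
The plan is to exhibit the domain $D$ of part (i) as a genuine homothety of $\varOmega$ centered at $P$, and then to read off parts (ii) and (iii) by squeezing $\varOmega_{1/2,P}$ between $D$ and $\varOmega$ and invoking Proposition \ref{pro1}. Concretely, Proposition \ref{theA6} supplies a constant $c_1\in(0,1)$ with $k_{PQ}(\varOmega_{1/2,P},\varOmega)\ge c_1$ for every $Q\in\partial\varOmega$ with $Q\ne P$, and I would set $D:=P+c_1(\varOmega-P)$. This is convex, homothetic to $\varOmega$ with center $P$ and ratio $c_1<1$, and has $P\in\partial D$; being a homothety centered at $P$ it fixes the tangent cone of $\varOmega$ at $P$, so $D$ and $\varOmega$ touch at $P$ and agree there to first order, which is exactly what Proposition \ref{pro1} requires.

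The substantive part of (i) is the inclusion $D\subseteq\varOmega_{1/2,P}$, and the plan is to get it from star-shapedness plus the ratio bound. First I would note that $\varOmega$, $\varOmega_{1/2,P}$ and $D$ are all star-shaped with respect to $P$: for $\varOmega$ this is convexity together with $P\in\partial\varOmega$; for $\varOmega_{1/2,P}$ it is immediate from Definition \ref{def-3}, since $tx+(1-t)P=\lambda'P+(1-\lambda')y_Q$ with $\lambda'=1-t(1-\lambda)\in(0,1)$ whenever $x=\lambda P+(1-\lambda)y_Q$; and for $D$ it is inherited from $\varOmega$. Then I would compare radial extents from $P$ direction by direction: along the ray toward a boundary point $Q$ the set $\varOmega$ exits exactly at $Q$, so $D$ reaches distance $c_1\operatorname{dist}(P,Q)$ from $P$, whereas $\varOmega_{1/2,P}$ reaches the point $y_Q$ at distance $k_{PQ}(\varOmega_{1/2,P},\varOmega)\operatorname{dist}(P,Q)\ge c_1\operatorname{dist}(P,Q)$. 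Since every direction through $P$ into $\varOmega$ arises this way and all three sets are star-shaped at $P$, this yields $D\subseteq\varOmega_{1/2,P}\subseteq\varOmega$. For the remaining claim $\partial D\cap\partial\varOmega=\{P\}$ I would pass to coordinates in which the exterior-$(a,\eta)$-type condition at $P=O$ reads $\varOmega\subseteq\{x_n\ge\eta|x'|^a\}$ with $a>1$; then any $x=c_1y\in\partial D$ with $y\in\partial\varOmega$ satisfies $x_n=c_1y_n\ge c_1\eta|y'|^a=c_1^{\,1-a}\eta|x'|^a>\eta|x'|^a$ as soon as $x'\ne0$ (because $c_1^{\,1-a}>1$), so $\partial D$ meets the model lower surface only at $P$; this gives $\partial D\cap\partial\varOmega=\{P\}$ when $\varOmega$ is strictly convex at $P$ (the situation in the applications), and in any case only the behavior at $P$ is needed downstream.

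Granting (i), part (ii) is a short deduction: the nested, mutually $P$-tangent chain $D\subseteq\varOmega_{1/2,P}\subseteq\varOmega$ together with monotonicity of the exterior-$(a,\eta)$-type exponent under inclusion (Lemma \ref{property}(iv)) gives $a_D(P)\le a_{\varOmega_{1/2,P}}(P)\le a_{\varOmega}(P)$, while $a_D(P)=a_{\varOmega}(P)$ by Proposition \ref{pro1}; hence all three coincide, i.e.\ $a_{\varOmega_{1/2,P}}(P)=a_{\varOmega}(P)$, which in particular establishes \eqref{eq6.3'} and therefore \eqref{eq:a-P}, and the identification with $a_{\min}$ follows from its definition in Section \ref{sec1}. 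For part (iii) I would observe that a homothety centered at $P$ also preserves the interior exponent — it sends the model region $\{|x'|<\mu,\ \tfrac{1}{2}\eta|x'|^a<x_n<\tfrac{1}{2}\eta\varepsilon^a\}$ to a region of the same shape, altering only $\eta$ and $\varepsilon$ — so $D$ and $\varOmega$ have equal interior exponents at $P$, which by Remark \ref{remark:eta'} may be taken equal to the exterior exponent of (ii); applying Lemma \ref{property}(v) to the same chain then forces $a_{\varOmega_{1/2,P}}(P)=a_{\varOmega}(P)$ for the interior exponent as well, with common value $a_{\max}$.

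The main obstacle I anticipate is confined to part (i) and is purely geometric: converting the uniform domain-ratio bound of Proposition \ref{theA6} into containment of the \emph{entire} homothetic copy $D$ inside $\varOmega_{1/2,P}$ — this is precisely where the star-shapedness of $\varOmega_{1/2,P}$ at $P$ is indispensable — and pinning down the single-touching-point statement $\partial D\cap\partial\varOmega=\{P\}$, which is delicate if $\partial\varOmega$ carries a flat face through $P$. Once these geometric facts are in hand, (ii) and (iii) are immediate from Propositions \ref{theA6} and \ref{pro1} and the monotonicity properties in Lemma \ref{property}.
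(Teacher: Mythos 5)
The paper does not actually supply a proof of this corollary; it asserts it as a ``direct consequence'' of Propositions \ref{theA6} and \ref{pro1}, so your proposal is a genuine fill-in rather than something to be compared against a written argument. The core of what you write is correct and is surely what the authors had in mind: take $D := P + c_1(\varOmega - P)$ with $c_1$ the lower bound from Proposition \ref{theA6}, verify $D\subseteq\varOmega_{1/2,P}$ by comparing radial extents from $P$ using star-shapedness at $P$ (the containment on each ray follows because $D$ reaches distance $c_1\operatorname{dist}(P,Q)$ while $\varOmega_{1/2,P}$ reaches $y_Q$ at distance $k_{PQ}\operatorname{dist}(P,Q)\geq c_1\operatorname{dist}(P,Q)$), and then read off (ii) and (iii) from the chain $D\subseteq\varOmega_{1/2,P}\subseteq\varOmega$, Proposition \ref{pro1} (and its obvious interior analogue), and the monotonicity in Lemma \ref{property}(iv)--(v). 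All of that is sound.

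The one place where the reasoning is not actually conclusive is the claim $\partial D\cap\partial\varOmega=\{P\}$. Your cone computation shows that $\partial D\setminus\{P\}$ lies in the \emph{open} model cone $\{x_n>\eta|x'|^a\}$, but that is consistent with $\partial D$ meeting $\partial\varOmega$: the entirety of $\overline{\varOmega}\setminus\{P\}$ (boundary included) lies in that same open cone, so ``strictly inside the model surface'' is not the same as ``inside $\varOmega$''. What actually controls $\partial D\cap\partial\varOmega$ is whether $\partial\varOmega$ contains a line segment emanating from $P$: by the standard convexity fact that an open segment between two boundary points is either entirely in the boundary or entirely in the interior, $\partial D\cap\partial\varOmega=\{P\}$ holds for \emph{some} (equivalently every) homothety ratio $c_1\in(0,1)$ centered at $P$ if and only if no such segment exists. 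The exterior-$(a,\eta)$-type condition does not exclude this even for $a>1$: a triangle with vertex at $P$ and edges $x_2=\pm x_1$ is exterior-$(a,\eta)$-type for every $a\geq 1$, $\eta\leq 1$, yet $\partial D\cap\partial\varOmega$ is then a pair of short segments through $P$ for every centered homothety. So the corollary as stated appears to carry an implicit strict-convexity-at-$P$ hypothesis. You do flag this (``when $\varOmega$ is strictly convex at $P$''), and you are right that only tangency at $P$ --- automatic from a homothety centered at $P$ --- is what Proposition \ref{pro1} and the downstream monotonicity argument actually use; but the cone inequality as written should not be presented as if it establishes $\partial D\cap\partial\varOmega=\{P\}$, since it only establishes a necessary condition for that, not a sufficient one.
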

	
\begin{figure}[htbp]  
	\centering 
	\includegraphics[scale=0.45]{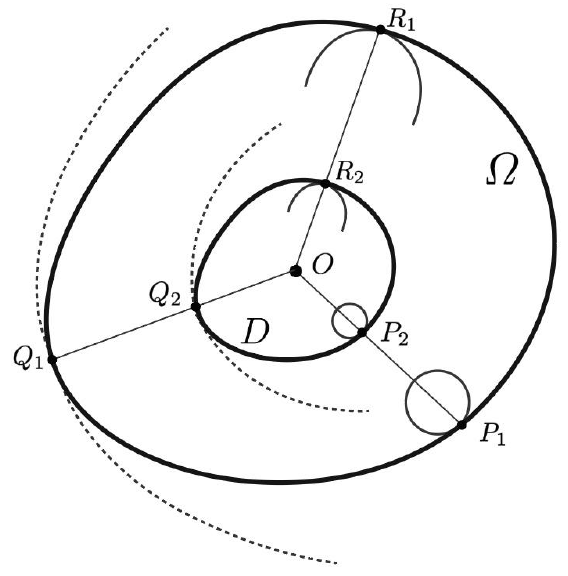} 
	\caption{Two homothetic domains that have the same $a$ at pair point.} 
	\label{Fig.5} 
\end{figure}
	
	Because of Proposition \ref{theA6} and Corollary \ref{coro} (ii), we are ready to construct a domain $D$ that satisfies  (\ref{eq6.4}).
	Since $D \subset \varOmega_{1/2,P}$,  we have
	\begin{equation*}
		a_{D}(P)   \leq a_{\varOmega _{1/2,P}}(P).
	\end{equation*}
	From Proposition \ref{pro1}, we know that 
	\begin{equation*}
		a_{D}(P) = a_{\varOmega}(P). 
	\end{equation*}
	Therefore we obtain \eqref{eq6.3'}. Together with 
	(\ref{eq6.3}), this gives the desired result \eqref{eq:a-P}.

	Taking \eqref{eq:a-P}  back to   (\ref{eq88}) now leads us to
	\begin{equation*}
		m\left(\operatorname{dist}(y,P)\right)^{\mu(a_{\varOmega}(P))} \leq \vert u(y)-u(P) \vert \leq M\left(\operatorname{dist}(y,P)\right)^{\mu(a_{\varOmega}(P))} ,\quad  \forall y\in \varOmega_{1/2,P}.
	\end{equation*}
	By Lemma \ref{lemma-2},  we can derive the final conclusion that
	\begin{equation*}
		\vert u(y)-u(P) \vert\ = \mathcal{O} (\left(\operatorname{dist}(y,P)\right)^{\mu(a_{\varOmega}(P))}),\quad  \forall y\in \varOmega_{1/2,P},
	\end{equation*} 
	which is consistent with our result \eqref{eq09} in Theorem \ref{thm1}.
	
\subsection{Rest proof: The case $a\in [1,2)$}
	
	In this case, we can only construct local estimates (since there is no exterior-$(a,\eta)$-type domain with $a\in [1,2)$) and 	
	 we always have $b>1$ (due to the H\"older exponent $\mu(a)=\frac{2}{ab}\leq 1$).  
 For convenience, all  constants $\widetilde{C}$s in this case different from those $C$s in the case $a\in [2,+\infty)$ are collected in Table \ref{table:constant-3}.

\begin{table}[ht] \caption{Collection of  constants  changing for the case $a\in [1,2)$}
	\label{table:constant-3}
	\renewcommand{\arraystretch}{1.8}\centering
	\begin{tabular}{|c|c|c|c|c|} 
		\hline
		Symbol & \multicolumn{2}{|c|}{Values}  
		& Sign
		\\ [0.5ex]
		\hline
		$\widetilde{C}_1$ 
		& \multicolumn{2}{|c|}{$\frac{8(b-1)}{a^2 b^3} (1-\delta)$}  &$\widetilde{C}_1>0$ \\\hline 
		$\widetilde{C}_2$ 
		& \multicolumn{2}{|c|}{$\left(\frac{2}{b}+\frac{4(b-1)}{b^2} \delta(1-\delta)^{-1}\right) \left(\frac{2(a-2)}{a^2 b}(1-\delta)^{a-1}+\frac{4(b-1)}{a^2 b^2}(1-\delta)^{a-2} \right)$}  &$\widetilde{C}_2>0$ \\\hline
		$\widetilde{C}_3$ 
		& \multicolumn{2}{|c|}{$\left(\frac{2}{b}+\frac{4(b-1)}{b^2} \delta(1-\delta)^{-1}\right) (\operatorname{\operatorname{diam}}(\varOmega))^{2-\frac{2}{a}}+\frac{2(a-2)}{a^2 b}(1-\delta)^{a-1}+\frac{4(b-1)}{a^2 b^2}(1-\delta)^{a-2}$ }  &$\widetilde{C}_3>0$  \\\hline
		$\widetilde{C}_4$ 
		& \multicolumn{2}{|c|}{$\min \left\{\frac{2}{b}, \frac{\widetilde{C}_1(a, b, \delta)}{\widetilde{C}_3(a, b, \delta, \operatorname{\operatorname{diam}}(\varOmega))}\right\}$}  &$\widetilde{C}_4>0$\\\hline
		$\widetilde{C}_5$ 
		& \multicolumn{2}{|c|}{$\frac{2(ab-2)}{a^2b^2}$ }  &$\widetilde{C}_5>0$ 
		\\\hline
		$\widetilde{C}_{6}$ 
		& \multicolumn{2}{|c|}{$4\big(\rho'^{-1}+1\big)^2\big(1+(2a\eta' \varepsilon^{a-1})^2\big) \max\Big\{\Big(\big(1-\big(\frac{1}{4}\big)^{\frac{2}{a}}\big)\Big)^{2-a},1\Big\}$}   &$\widetilde{C}_{6}>0$ \\\hline
		$\widetilde{C}_{7}$ 
		& \multicolumn{2}{|c|}{$\min\left\{2^{-\gamma}a^\gamma b^\gamma,\widetilde{C}_{6}(a,b,\eta',\varepsilon)^{-\frac{\gamma}{2}}\right\}$}   &$\widetilde{C}_{7}>0$ 
		\\\hline
		$\widetilde{C}_{10}$ 
		& \multicolumn{2}{|c|}{\begin{tiny}$A^{-1}B  \widetilde{C}_4(a, b, \delta, \operatorname{\operatorname{diam}}(\varOmega))^s \widetilde{C}_5(a, b)^t  \widetilde{C}_7(a, b,\eta',\varepsilon, \gamma)C_8(a,\eta',\varepsilon,\beta,n)\min\Big\{\big(1-\big(\frac{1}{4}\big)^{\frac{2}{a}}\big)^{\frac{1}{b}\left(\alpha+(b-1) \gamma+(1-b) s+(1-ab) t\right)},1\Big\}$\end{tiny}}   &$\widetilde{C}_{10}>0$  \\\hline
		$\widetilde{C}_{12}$&\multicolumn{2}{|c|}{ $\widetilde{C}_{10}(a, b,\eta',\varepsilon, A,B,\alpha,\beta,\gamma,s,t, \delta,      \operatorname{\operatorname{diam}}(\varOmega), n)C_{11}(a,b,\alpha,\beta,\gamma,s,t,\operatorname{diam}(\varOmega),n) $ }   &$ 
		\widetilde{C}_{12}>0$  \\\hline
		$\widetilde{C}_{16}$ 
		& \multicolumn{2}{|c|}{$\frac{16}{a^2}\big(\rho^{-1}+1\big)^2\big(1+(2a\eta \varepsilon^{a-1})^2\big)$}   &$\widetilde{C}_{16}>0$  \\\hline
		$\widetilde{C}_{17}$ 
		& \multicolumn{2}{|c|}{$\max\left\{2^{-\gamma}a^\gamma b^\gamma,\widetilde{C}_{16}(a,b,\eta,\varepsilon)^{-\frac{\gamma}{2}}\right\}$}   &$\widetilde{C}_{17}>0$  \\\hline
		$\widetilde{C}_{18}$& \multicolumn{2}{|c|}{$A^{-1}BC_{13}(b)^{s}C_{14}(a,b,\eta,\varepsilon,n)^{t}C_{15}(a,\eta,\varepsilon,\beta,n)\widetilde{C}_{17}(a,b,\eta,\varepsilon)$  } &$\widetilde{C}_{18}>0$  \\\hline
		$\widetilde{C}_{20}$& \multicolumn{2}{|c|}{$\widetilde{C}_{18}(a, b,\eta,\varepsilon, A,B,\beta,\gamma,s,t, n)C_{19}(a,b,\alpha,\gamma,s,t) $}   &$C_{20}>0$    \\\hline
		$\widetilde{C}_{22}$& \multicolumn{2}{|c|}{$\widetilde{C}_{20}(a, b,\eta,\varepsilon, A,B,\alpha,\beta,\gamma,s,t, n)  C_{21}(a,b,\eta,\varepsilon,\alpha,\beta,\gamma,s,t,n)$}   &$\widetilde{C}_{22}>0$  \\\hline
	\end{tabular}
\end{table}	 
	 
	 Though several constants   for this case  become different, we run the same  procedure in the last two sections here  except for the gradient term $(1 + |DW|^2)^{-\frac{\gamma}{2}}$.

In the proof of boundary upper bound estimates for this case, in order to get a lower bound for the gradient term $(1 + |DW|^2)^{-\frac{\gamma}{2}}$ locally, we first use  Remark \ref{remark:eta'} to get the domain  $V'\subseteq \varOmega_{1/2,P}$  as defined by \eqref{def:V'};  then similar to \eqref{eq:W-bound} and \eqref{eq:DW-1}, it holds in $V'$ that 
\begin{equation*}
	|W|\in\left[\big(1-\big(\frac{1}{4}\big)^{\frac{2}{a}}\big)^{\frac{1}{b}}\cdot\big(\frac{x_n}{\xi}\big)^{\frac{2}{ab}},\big(\frac{x_n}{\xi}\big)^{\frac{2}{ab}}\right],
\end{equation*}
and
\begin{equation*}
	1+|DW|^2\in \left[\frac{4}{a^2b^2}\cdot|W|^{2-2b}\cdot x_n{}^{\frac{4}{a}-2}\cdot\xi^{-\frac{4}{a}},\widetilde{C}_{6}(a,b,\eta',\varepsilon)\cdot|W|^{2-2b}\cdot x_n{}^{\frac{4}{a}-2}\cdot\xi^{-\frac{4}{a}}\right]
\end{equation*}
with the constant  $\rho'=\rho'(a, \eta', \varepsilon)>0$ such that  
$|W|\geq \rho'\operatorname{dist}(x,S')$ for any $x\in V'$; and finally we derive
\begin{equation*}
	\left(1+|DW|^2\right)^{-\frac{\gamma}{2}} \geq \widetilde{C}_{7}(a,b,\eta',\varepsilon)  \cdot|W|^{(b-1) \gamma}\cdot x_n{}^{(1-\frac{2}{a})\gamma}\cdot \xi^{\frac{2}{a}\gamma}.
\end{equation*} 
The steps concerning $\widetilde{C}_{8}$ and $\widetilde{C}_{9}$ can be skipped now.

In the proof of boundary lower bound estimates for this case,
 we also refer to the geometric handling in \cite{Li-Li-F} to bound  $1+|D W|^2$ for any $x\in V$: 
\begin{equation*} 
	1+|DW|^2\in \left[\frac{4}{a^2b^2}\cdot|W|^{2-2b}\cdot x_n{}^{\frac{4}{a}-2}\cdot\xi^{-\frac{4}{a}},\widetilde{C}_{16}(a,b,\eta,\varepsilon)\cdot|W|^{2-2b}\cdot x_n{}^{\frac{4}{a}-2}\cdot\xi^{-\frac{4}{a}}\right].
\end{equation*}
In this way, we get the following upper bound for the gradient term $(1 + |DW|^2)^{-\frac{\gamma}{2}}$ locally: 
\begin{equation*}
	\left(1+|DW|^2\right)^{-\frac{\gamma}{2}} \leq \widetilde{C}_{17}(a,b,\eta,\varepsilon)  \cdot|W|^{(b-1) \gamma}\cdot x_n{}^{(1-\frac{2}{a})\gamma}\cdot \xi^{\frac{2}{a}\gamma}.
\end{equation*}

	\subsection{Application and example}\label{sec:apply}
	
	There are fruitful applications of our results. Here we merely apply  our results to the scenarios that were  considered in \cite{Li-Li-F,Li-Li-N,Li-2021} for example.

	On  one hand, 
	the results in this paper can be applied to the Dirichlet problem of  fully nonlinear ellliptic equations with zero boundary conditions:
	\begin{align*}
		F(\lambda_{1}(D^{2}u),\cdot \cdot \cdot ,\lambda_{n}(D^{2}u)) &= f(x,u,Du)\quad \text{in} \; \varOmega,   \\
		u &= 0\quad \text{on} \; \partial\varOmega. 
	\end{align*}
	If we let $F$ satisfy $(F_1)$-$(F_2)$ and $f$ satisfy $(f_1)$-$(f_3)$, then 
	Theorem \ref{thm1-1} in this paper includes Theorem 1.2 in \cite{Li-Li-N} as a special case when $\varphi\equiv 0$. If we let $F$ satisfy $(F_1)$ and $(F_2')$ and let $f$ satisfy $(f_1)$, $(f_2)$ and $(f_3')$, then 
	Theorem \ref{thm1-2} in this paper includes Theorem 1.1 in \cite{Li-Li-F} as a special case when $\varphi\equiv 0$. If we let $F$ satisfy \eqref{eq:Fcase},  $f$ satisfy \eqref{eq:fcase}, and $\varphi$ satisfy \eqref{eq:varphicase}, then Theorem \ref{thm1} in this paper includes both Theorem 1.2 in \cite{Li-Li-N} and Theorem 1.1 in \cite{Li-Li-F}.  
	
	On the other hand, we can apply the results in this paper to the Dirichlet problem of  Monge-Amp\`ere type equations:
	\begin{align*}
		\det D^2u &= f(x,u,Du)\quad \text{in} \; \varOmega \\
		u &= \varphi (x)\quad \text{on} \; \partial\varOmega.
	\end{align*} 
	If we let  $f$ satisfy $(f_1)$-$(f_3)$ and $\varphi\in C^{\mu}(\partial\varOmega)$, then Theorem \ref{thm1-1} in this paper includes Theorem 1.1 and  Theorem 1.2 in \cite{Li-2021} where $\varphi$ was assumed with more restricted conditions.

	Let us give a more specific example at the end of this paper. 
	We consider the following Dirichlet problem of the hyperbolic affine sphere equation:
	\begin{align}
		\det D^{2}u &= |u|^{-(n+2)} \ \ \text{ in }  B_{\frac{1}{2}}(0,\cdots,0,\tfrac{1}{2}), \stepcounter{equation}\tag{\theequation}\label{eq93}\\ 
		u &= -\sqrt{x_n}\ \ \text{ on }  \partial B_{\frac{1}{2}}(0,\cdots,0,\tfrac{1}{2}).\stepcounter{equation}\tag{\theequation}\label{eq93-boundary}
	\end{align}
	In this special case, we take
	\begin{align*}
		&\varOmega = B_{\frac{1}{2}}(0,\cdots,0,\tfrac{1}{2}),\ \ \ \ \ \ \ \ 
		F(\lambda_{1}(D^{2}u),\cdots ,\lambda_{n}(D^{2}u)) =\det D^{2}u=\lambda_{1}(D^{2}u)\cdots \lambda_{n}(D^{2}u),\\
		&f(x,u,Du) = |u|^{-(n+2)},\ \ \ \ 
		\varphi(x)=-\sqrt{x_n}.
	\end{align*}
	Here, $\varOmega$ is a $(2,\eta)$ type domain (which is consistent with Lemma \ref{lemma:Jian-Li}); $P=O\in\partial\varOmega$; and we can take $\alpha = n+2$, $\beta = n+1$, $\gamma = 0$ in both $(f_3)$ and $(f_3')$.  
	
	It is easy to verify that $$U(r,x_n):  
	=-\sqrt{1-r^2-(x_{n}-1)^{2}}$$ is a solution to the problem (\ref{eq93})-\eqref{eq93-boundary}.  
	Direct computations give rise to 
	\begin{align*}
		U_r=-\frac{r}{U},\ \ U_n=-\frac{x_n-1}{U},\ \ U_{rr}=-\frac{U^2+r^2}{U^3},\ \ U_{rn}=-\frac{r(x_n-1)}{U^3},\ \ U_{nn}=-\frac{U^2+(x_n-1)^2}{U^3}.
	\end{align*}
	Using Lemma \ref{lemma:eigen}, we can infer that
	\[\lambda_-(D^2U)=-\frac{1}{U^3},\ \ \lambda_+(D^2U)=-\frac{1}{U},\]
	and hence the $n$ eigenvalues of $D^2U$ are
	\[\underbrace{-\frac{1}{U}, \cdots, -\frac{1}{U}}_{n-1\ \text{terms}}, -\frac{1}{U^3}.\]
	It follows that 
	\begin{equation*}
		F(\lambda_{1}(D^{2}u),\cdots ,\lambda_{n}(D^{2}u)) 
		=\Big(-\frac{1}{U}\Big)^{n-1}\Big(-\frac{1}{U^3}\Big)=\lambda_{\min}(D^2U)^{n-1}\lambda_{\max}(D^2U),
	\end{equation*}
	where we have used the facts 
	\[\lambda_{\min}(D^2U)=-\frac{1}{U},\ \ \lambda_{\max}(D^2U)=-\frac{1}{U^3}.\]
	Comparing with the structure conditions  shows that we can take $s=n-1$, $t=1$ in both $(F_2)$ and  $(F_2')$.  
	
	By direct computations, 
	we obtain $b=2$ through \eqref{eq:take-b-1} while $b\leq 2$ through \eqref{eq:take-b-2}. Thus we take $b=2$ and calculate the H\"older exponent as $\mu(a)=\frac{1}{2}$. We are now ready to use  Theorem \ref{thm1} to derive that  for any    $y\in \varOmega_{1/2,P}$ that 
	\begin{equation}\label{eq:verify}
		\vert U(y)-U(P) \vert=\mathcal{O}\left(\left(\operatorname{dist}(y,P)\right)^{\frac{1}{2}}\right)
	\end{equation}
and 
	$$u\in C^{\frac{1}{2}}(\overline{B_{\frac{1}{2}}(0,\cdots,0,\tfrac{1}{2})} ).$$ 
	
	In fact,  this result can by verified as follows. 
	We note that  there always exists some vector  
	$
	\kappa= (\kappa_1,\cdots,\kappa_n)$ with $\kappa_n>0$ such that 
	$y= P+t\kappa$, where $t$ is used as the parameter. It is straightforward to calculate that
	\begin{align*}
		\frac{|U(y)-U(P)|}{\left(\operatorname{dist}(y,P)\right)^{\frac{1}{2}}}=\frac{\big|\sqrt{1-(t\kappa_1)^2-\cdots (t\kappa_{n-1})^2-(t\kappa_n-1)^2}-0\big|}{\sqrt{t|\kappa|}}=\frac{\sqrt{2\kappa_n-|\kappa|^2t } }{\sqrt{|\kappa|}}.
	\end{align*}
	In this way, we obtain 
	\begin{equation*}
		\lim_{t\to 0}\frac{|U(y)-U(P)|}{\left(\operatorname{dist}(y,P)\right)^{\frac{1}{2}}}=\frac{\sqrt{2\kappa_n} }{\sqrt{|\kappa|}},
	\end{equation*}
	which is a positive constant. This proves \eqref{eq:verify} and therefore $\mu(a)=\frac{1}{2}$ is the optimal exponent.  We also note that 
	\[1-r^2-(x_{n}-1)^{2}\neq 0\ \  \text{over}\  \overline{B_{\frac{1}{2}}(0,\cdots,0,\tfrac{1}{2})},\]
which implies that 
	\begin{equation*} 
		U(r,x_n) 
		=-\sqrt{1-r^2-(x_{n}-1)^{2}} 
	\end{equation*}
is smooth over $\overline{B_{\frac{1}{2}}(0,\cdots,0,\tfrac{1}{2})}$. This gives 
		$$u\in C^{\frac{1}{2}}(\overline{B_{\frac{1}{2}}(0,\cdots,0,\tfrac{1}{2})} )$$ 
		as a direct consequence.

	 \appendix
\section{Some computations on barrier function $W$}\label{appendix}
 
For convenience, we gather some  computations related to the constants $C_1$, $C_2$, $C_3$, $\widetilde{C}_1$, $\widetilde{C}_2$ and $\widetilde{C}_3$, as  they are not easy to obtain.

By \eqref{eq:W} and \eqref{assumption-7}, 
we  infer the  relations  
\begin{align*}
	&r^2\in\Big[0,\delta\big(\frac{x_{n}}{\xi}\big)^{\frac{2}{a}}\Big]\subseteq\Big[0,\delta(1-\delta)^{-1}|W|^b\Big] ,\stepcounter{equation}\tag{\theequation}\label{boundofr2}\\
	&|W|^b\in \Big[(1-\delta)\big(\frac{x_{n}}{\xi}\big)^{\frac{2}{a}},\big(\frac{x_{n}}{\xi}\big)^{\frac{2}{a}}\Big],\stepcounter{equation}\tag{\theequation}\label{boundofW}\\
	&\frac{x_n}{\xi} \in \Big[|W|^{\frac{ab}{2}},(1-\delta)^{-\frac{a}{2}}|W|^{\frac{ab}{2}}\Big],\stepcounter{equation}\tag{\theequation}\label{boundofxneps}
\end{align*}
and also obtain
\begin{align*}
	W_{rr}\cdot W_{nn}-|W_{rn}|^2&=\underbrace{\frac{8(a-2)(b-1)}{a^2b^3}\cdot|W|^{2-3b}\cdot\big(\frac{x_n}{\xi}\big)^{\frac{2}{a}-2}\cdot r^2\cdot\xi^{-2}}_{I_1}\\
	&\ \ +\underbrace{\frac{8(b-1)}{a^2b^3}\cdot|W|^{2-3b}\cdot\big(\frac{x_n}{\xi}\big)^{\frac{4}{a}-2}\cdot\xi^{-2}}_{I_2}
	+\underbrace{\frac{4(a-2)}{a^2b^2}\cdot|W|^{2-2b}\cdot\big(\frac{x_n}{\xi}\big)^{\frac{2}{a}-2}\cdot\xi^{-2}}_{I_3}.
	\stepcounter{equation}\tag{\theequation}\label{I123}
\end{align*}

For the case $a\in [2,+\infty)$, the estimates in \textit{Step 1-1}
and \textit{Step 1-2} are different.

In \textit{Step 1-1}, we have $a\geq 2$ and $b>1$. 
It is easy to see from \eqref{I123} that  $I_1$, $I_2$, $I_3\geq  0$ and then \eqref{boundofxneps} gives 
\begin{align*}
	W_{rr}\cdot W_{nn}-|W_{rn}|^2
	&\geq  I_2=\frac{8(b-1)}{a^2b^3}\cdot|W|^{2-3b}\cdot\big(\frac{x_n}{\xi}\big)^{\frac{4}{a}-2}\cdot\xi^{-2}\\
	&\geq \frac{8(b-1)}{a^2b^3}\cdot|W|^{2-3b}\cdot(1-\delta)^{a-2}|W|^{2b-ab}\cdot\xi^{-2}\\
	&=	C_1(a,b,\delta)\cdot|W|^{2-b-ab}\cdot\xi^{-2}.
\end{align*}
By virtue of \eqref{boundofr2}, it is trivial to infer  
\begin{equation*}
	W_{rr}\in \left[\frac{2}{b}\cdot|W|^{1-b},\left(\frac{2}{b}+\frac{4(b-1)}{b^2}\delta(1-\delta)^{-1}\right)\cdot|W|^{1-b}\right].
\end{equation*}
According to   \eqref{boundofxneps}, we get 
\begin{equation*}
	W_{nn}\in \left[\left(\frac{2(a-2)}{a^2 b}(1-\delta)^{a-1}+\frac{4(b-1)}{a^2 b^2}(1-\delta)^{a-2} \right)\cdot|W|^{1-ab}\cdot\xi^{-2},\left(\frac{2(a-2)}{a^2 b}+\frac{4(b-1)}{a^2 b^2}\right)\cdot|W|^{1-ab}\cdot\xi^{-2}\right].
\end{equation*}
Hence we obtain 
\begin{align*}
	W_{rr}\cdot W_{nn}-|W_{rn}|^2
	&\leq  W_{rr}\cdot W_{nn}\leq  C_2(a,b,\delta)\cdot |W|^{2-b-ab}\cdot\xi^{-2}.
\end{align*}
We can also derive that 
\begin{align*}
	W_{rr}+W_{nn}&\leq   \left(\frac{2}{b}+\frac{4(b-1)}{b^2}\delta(1-\delta)^{-1}\right)\cdot |W|^{1-b}+\left(\frac{2(a-2)}{a^2 b}+\frac{4(b-1)}{a^2 b^2}\right)\cdot|W|^{1-ab}\cdot\xi^{-2}\\
	&=\left(\left(\frac{2}{b}+\frac{4(b-1)}{b^2}\delta(1-\delta)^{-1}\right)\cdot |W|^{ab-b}\cdot \xi^2+\frac{2(a-2)}{a^2 b}+\frac{4(b-1)}{a^2 b^2}\right)\cdot|W|^{1-ab}\cdot\xi^{-2}.
\end{align*}
In view of $2-\frac{2}{a}>0$, we note that 
\begin{equation*}
	|W|^{ab-b}\leq \big(\frac{x_{n}}{\xi}\big)^{2-\frac{2}{a}}\leq  \big(\operatorname{diam}(\varOmega)\big)^{2-\frac{2}{a}}\cdot \xi^{\frac{2}{a}-2}.
\end{equation*}
Since $0<\xi<1$, it follows that 
\begin{align*}
	W_{rr}+W_{nn}&\leq   \left(\left(\frac{2}{b}+\frac{4(b-1)}{b^2}\delta(1-\delta)^{-1}\right)\cdot \big(\operatorname{diam}(\varOmega)\big)^{2-\frac{2}{a}}\cdot \xi^{\frac{2}{a}} +\frac{2(a-2)}{a^2 b}+\frac{4(b-1)}{a^2 b^2}\right)\cdot|W|^{1-ab}\cdot\xi^{-2}\\
	&\leq C_3(a, b, \delta, \operatorname{\operatorname{diam}}(\varOmega)) \cdot|W|^{1-a b} \cdot \xi^{-2}.
\end{align*}

In \textit{Step 1-2}, we have $a\geq 2$ and $0<b\leq 1$. 
It follows that $I_1\leq  0$, $I_2\leq  0$, $I_3\geq  0$ in \eqref{I123}. Moreover, using  \eqref{boundofr2}, we get 
\[I_1=(a-2)r^2\cdot\big(\frac{x_n}{\xi}\big)^{-\frac{2}{a}}\cdot I_2\geq \delta(a-2)\cdot I_2.\] 
Hence, it follows from \eqref{boundofxneps} that
\begin{align*}
	I_1+I_2&\geq \big(\delta(a-2)+1\big)\cdot I_2\\
	&=\big(\delta(a-2)+1\big)\frac{8(b-1)}{a^2b^3}\cdot|W|^{2-3b}\cdot\big(\frac{x_n}{\xi}\big)^{\frac{4}{a}-2}\cdot\xi^{-2}\\
	&\geq \big(\delta(a-2)+1\big)\frac{8(b-1)}{a^2b^3}\cdot|W|^{2-3b}\cdot|W|^{2b-ab}\cdot\xi^{-2}\\
	&=\big(\delta(a-2)+1\big)\frac{8(b-1)}{a^2b^3}\cdot|W|^{2-b-ab}\cdot\xi^{-2}.
\end{align*}
We also note that \eqref{boundofxneps} implies
\begin{align*}
	I_3&= \frac{4(a-2)}{a^2b^2}\cdot|W|^{2-2b}\cdot\big(\frac{x_n}{\xi}\big)^{\frac{2}{a}-2}\cdot\xi^{-2}\\
	&\geq 
	\frac{4(a-2)}{a^2b^2}\cdot|W|^{2-2b}\cdot(1-\delta)^{a-1}|W|^{b-ab}\cdot\xi^{-2}\\
	&=\frac{4(a-2)}{a^2b^2}\cdot(1-\delta)^{a-1}\cdot|W|^{2-b-ab}\cdot\xi^{-2}.
\end{align*}
Consequently, we have 
\begin{align*}
	W_{rr}\cdot W_{nn}-|W_{rn}|^2&\geq \delta(a-2)\cdot I_2+I_2+I_3\\ &\geq \big(\delta(a-2)+1\big)\frac{8(b-1)}{a^2b^3}\cdot|W|^{2-b-ab}\cdot\xi^{-2}+\frac{4(a-2)}{a^2b^2}\cdot(1-\delta)^{a-1}\cdot|W|^{2-b-ab}\cdot\xi^{-2}\\
	&=C_1(a,b,\delta)
	\cdot|W|^{2-b-ab}\cdot\xi^{-2}.
\end{align*}
By \eqref{boundofr2}, we obtain 
\begin{equation*}
	W_{rr}\in \left[\left(\frac{2}{b}+\frac{4(b-1)}{b^2}\delta(1-\delta)^{-1}\right)|W|^{1-b},\frac{2}{b}\cdot|W|^{1-b}\right].
\end{equation*} 
According to  \eqref{boundofxneps}, we get 
\begin{equation*}
	W_{nn}\in \left[\left(\frac{2(a-2)}{a^2b}(1-\delta)^{a-1}+\frac{4(b-1)}{a^2b^2}\right)\cdot|W|^{1-ab}\cdot\xi^{-2},\left(\frac{2(a-2)}{a^2b}+\frac{4(b-1)}{a^2b^2}(1-\delta)^{a-2}\right)\cdot|W|^{1-ab}\cdot\xi^{-2}\right],
\end{equation*}
where we can take $\delta=C(b)$ small enough such that the coefficients are positive. Hence we derive
\begin{align*}
	W_{rr}\cdot W_{nn}-|W_{rn}|^2
	&\leq  W_{rr}\cdot W_{nn}\leq  C_2(a,b,\delta)\cdot |W|^{2-b-ab}\cdot\xi^{-2},
\end{align*}
and 
\begin{align*}
	W_{rr}+W_{nn}&\leq   \frac{2}{b}\cdot|W|^{1-b}+\left(\frac{2(a-2)}{a^2b}+\frac{4(b-1)}{a^2b^2}(1-\delta)^{a-2}\right)\cdot|W|^{1-ab}\cdot\xi^{-2}\\
	&=\left(\frac{2}{b}\cdot |W|^{ab-b}\cdot \xi^2+\left(\frac{2(a-2)}{a^2b}+\frac{4(b-1)}{a^2b^2}(1-\delta)^{a-2}\right)\right)\cdot|W|^{1-ab}\cdot\xi^{-2}\\
	&\leq   \left(\frac{2}{b}\cdot \left(\operatorname{diam}(\varOmega)\right)^{2-\frac{2}{a}}\cdot \xi^{\frac{2}{a}} +\left(\frac{2(a-2)}{a^2b}+\frac{4(b-1)}{a^2b^2}(1-\delta)^{a-2}\right)\right)\cdot|W|^{1-ab}\cdot\xi^{-2}\\
	&\leq  C_3(a,b,\delta,\operatorname{diam}(\varOmega))\cdot |W|^{1-ab}\cdot\xi^{-2}.
\end{align*}

	For the case $a\in [1,2)$, we always have $a<2$,  $b>1$ and $ab > 2$.
	It is obvious that $I_1<0$, $I_2>0$ and $I_3<0$. Since we let $\xi$ sufficiently small when constructing sub-solutions, we note that for each   $x_n$, $I_2$ is a higher-order  term when compared to $I_1$ and $I_3$. Thus 
	 we can ensure that $W_{r r} \cdot W_{n n}-\left|W_{r n}\right|^2 \geq  (1-\delta)I_2 > 0$, which also satisfies the convex condition in Lemma \ref{lemma:eigen}. Then we derive 
	\begin{align*}
		W_{rr}\cdot W_{nn}-|W_{rn}|^2
		&\geq  (1-\delta)I_2=(1-\delta)\frac{8(b-1)}{a^2b^3}\cdot|W|^{2-3b}\cdot\big(\frac{x_n}{\xi}\big)^{\frac{4}{a}-2}\cdot\xi^{-2}\\
		&\geq (1-\delta) \frac{8(b-1)}{a^2b^3}\cdot|W|^{2-3b}\cdot |W|^{2b-ab}\cdot\xi^{-2}\\
		&=	\widetilde{C}_1(a,b,\delta)\cdot|W|^{2-b-ab}\cdot\xi^{-2}.
	\end{align*}
In view of  \eqref{boundofr2}, we obtain 
	\begin{equation*}
		W_{r r} \in\left[\frac{2}{b} \cdot|W|^{1-b},\left(\frac{2}{b}+\frac{4(b-1)}{b^2} \delta(1-\delta)^{-1}\right) \cdot|W|^{1-b}\right] .
	\end{equation*}
By \eqref{boundofxneps}, we have
	\begin{equation*}
		W_{n n} \in\left[\frac{2ab-4}{a^2b^2}\cdot|W|^{1-a b} \cdot \xi^{-2}, \left(\frac{2(a-2)}{a^2 b}(1-\delta)^{a-1}+  \frac{4(b-1)}{a^2 b^2}(1-\delta)^{a-2}\right)\cdot|W|^{1-a b} \cdot \xi^{-2}\right],
	\end{equation*}
	where  the following  coefficient is positive 
	\begin{align*}
	&\ \ \ \ 	\frac{2(a-2)}{a^2 b}(1-\delta)^{a-1}+  \frac{4(b-1)}{a^2 b^2}(1-\delta)^{a-2} \\
	&>-\frac{4(b-1)}{a^2 b^2}(1-\delta)^{a-1}+  \frac{4(b-1)}{a^2 b^2}(1-\delta)^{a-2}\\
		&= (1-\delta)^{a-1}\left(\left((1-\delta)^{-1}-1\right) \cdot \frac{4(b-1)}{a^2b^2}\right) \\
		&> 0
	\end{align*}
based on the fact that 
\[\frac{2(a-2)}{a^2 b}+  \frac{4(b-1)}{a^2 b^2}=\frac{2ab-4}{a^2b^2}>0.\]
It follows that 
	\begin{equation*}
	W_{r r} \cdot W_{n n}-\left|W_{r n}\right|^2 \leq  W_{r r} \cdot W_{n n} \leq  \widetilde{C}_{2}(a, b, \delta) \cdot|W|^{2-b-a b} \cdot \xi^{-2}.
\end{equation*}
	Since $2-\frac{2}{a}>0$ and $\xi<1$ is sufficient small, we can infer 
		\begin{align*}
		&\ \ \ \ 	W_{r r}+W_{n n}\\
		 & \leq \left(\frac{2}{b}+\frac{4(b-1)}{b^2} \delta(1-\delta)^{-1}\right) \cdot|W|^{1-b}+\left(\frac{2(a-2)}{a^2 b}(1-\delta)^{a-1}+  \frac{4(b-1)}{a^2 b^2}(1-\delta)^{a-2}\right) \cdot|W|^{1-a b} \cdot \xi^{-2} \\
			& =\left(\left(\frac{2}{b}+\frac{4(b-1)}{b^2} \delta(1-\delta)^{-1}\right) \cdot|W|^{a b-b} \cdot \xi^2+\frac{2(a-2)}{a^2 b}(1-\delta)^{a-1}+  \frac{4(b-1)}{a^2 b^2}(1-\delta)^{a-2}\right) \cdot|W|^{1-a b} \cdot \xi^{-2} \\
			& \leq \left(\left(\frac{2}{b}+\frac{4(b-1)}{b^2} \delta(1-\delta)^{-1}\right) \cdot(\operatorname{diam}(\varOmega))^{2-\frac{2}{a}} \cdot \xi^{\frac{2}{a}}+\frac{2(a-2)}{a^2 b}(1-\delta)^{a-1}+  \frac{4(b-1)}{a^2 b^2}(1-\delta)^{a-2} \right) \cdot|W|^{1-a b} \cdot \xi^{-2} \\
			& \leq \widetilde{C}_{3}(a,b,\delta,\operatorname{diam}(\varOmega)) \cdot|W|^{1-a b} \cdot \xi^{-2}.
		\end{align*}

\end{document}